\numberwithin{equation}{section}
\newtheorem{thm}{Theorem}[section]
\newtheorem{prop}[thm]{Proposition}
\newtheorem{lem}[thm]{Lemma}
\newtheorem{cor}[thm]{Corollary}
\theoremstyle{definition}
\newtheorem{defi}{Definition}
\newtheorem{rmk}{Remark}
\newcommand\reallywidehat[1]{%
\savestack{\tmpbox}{\stretchto{%
  \scaleto{%
    \scalerel*[\widthof{\ensuremath{#1}}]{\kern-.6pt\bigwedge\kern-.6pt}%
    {\rule[-\textheight/2]{1ex}{\textheight}}
  }{\textheight}%
}{0.5ex}}%
\stackon[1pt]{#1}{\tmpbox}%
}
\DeclareSymbolFont{bbold}{U}{bbold}{m}{n}
\DeclareSymbolFontAlphabet{\mathbbold}{bbold}
\newcommand{\R}{\mathbb{R}}
\newcommand{\Z}{\mathbb{Z}}
\newcommand{\Q}{\mathbb{Q}}
\newcommand{\N}{\mathbb{N}}
\newcommand{\C}{\mathbb{C}}
\newcommand{\F}{\mathbb{F}}
\newcommand{\bbM}{\mathbb{M}}
\newcommand{\bbone}{\mathbbold{1}}
\renewcommand{\Im}{\text{Im}}
\newcommand{\inj}{\hookrightarrow}
\newcommand{\tth}{^{th}}
\newcommand{\M}{\mu_{Haar}}
\newcommand{\tkappa}{\tilde{\kappa}}
\renewcommand{\L}{\Lambda}
\newcommand{\Y}{\mathbb{Y}}
\newcommand{\G}{\mathscr{G}_t}
\newcommand{\GG}{\widetilde{\mathscr{G}_t}}
\newcommand{\ba}{\mathbf{a}}
\newcommand{\bx}{\mathbf{x}}
\newcommand{\by}{\mathbf{y}}
\newcommand{\cP}{\mathbb{Y}}
\newcommand{\tL}{\tilde{L}}
\newcommand{\tE}{\tilde{E}}
\newcommand{\EM}{E}
\newcommand{\cM}{\mathcal{M}}
\DeclareMathOperator{\Sym}{Sym}
\DeclareMathOperator{\len}{len}
\DeclareMathOperator{\Id}{Id}
\DeclareMathOperator{\Aut}{Aut}
\DeclareMathOperator{\GL}{GL}
\DeclareMathOperator{\coker}{coker}
\DeclareMathOperator{\Sig}{\mathbb{GT}}
\DeclareMathOperator{\bSig}{\overline{\mathbb{GT}}}
\DeclareMathOperator{\SN}{SN}
\DeclareMathOperator{\ESN}{ESN}
\DeclareMathOperator{\diag}{diag}
\DeclareMathOperator{\Supp}{Supp}
\DeclareMathOperator{\Mat}{Mat}
\newcommand{\sqbinom}[2]{\begin{bmatrix}#1\\ #2\end{bmatrix}}
\newcommand{\qhyp}{\;_4 \bar{\phi}_3}
\newcommand{\qhypl}{\;_2 \bar{\phi}_1}
\newcommand{\qhypm}{\;_3 \bar{\phi}_2}
\begin{document}

\title{Hall-Littlewood polynomials, boundaries, and $p$-adic random matrices}
\shorttitle{Hall-Littlewood polynomials, boundaries, and $p$-adic random matrices}

\author{Roger Van Peski\affil{1}}
\abbrevauthor{R. Van Peski}
\headabbrevauthor{Van Peski, R.}
\address{\affilnum{1}Department of Mathematics, Massachusetts Institute of Technology, USA}

\correspdetails{rvp@mit.edu}

\received{1 Month 20XX}
\revised{11 Month 20XX}
\accepted{21 Month 20XX}

\communicated{A. Editor}



\begin{abstract} 
We prove that the boundary of the Hall-Littlewood $t$-deformation of the Gelfand-Tsetlin graph is parametrized by infinite integer signatures, extending results of Gorin \cite{gorin2012q} and Cuenca \cite{cuenca2018asymptotic} on boundaries of related deformed Gelfand-Tsetlin graphs. In the special case when $1/t$ is a prime $p$ we use this to recover results of Bufetov-Qiu \cite{bufetov2017ergodic} and Assiotis \cite{assiotis2020infinite} on infinite $p$-adic random matrices, placing them in the general context of branching graphs derived from symmetric functions.

Our methods rely on explicit formulas for certain skew Hall-Littlewood polynomials. As a separate corollary to these, we obtain a simple expression for the joint distribution of the cokernels of products $A_1, A_2A_1, A_3A_2A_1,\ldots$ of independent Haar-distributed matrices $A_i$ over $\Z_p$, generalizing the explicit formula for the classical Cohen-Lenstra measure.
\end{abstract}
\maketitle
\tableofcontents
\section{Introduction}

\subsection{Hall-Littlewood polynomials.}

The classical \emph{Hall-Littlewood polynomials} $P_\lambda(x_1,\ldots,x_n; t)$ are a family of symmetric polynomials in variables $x_1,\ldots,x_n$, with an additional parameter $t$, indexed by weakly decreasing sequences of nonnegative integers $\lambda = (\lambda_1 \geq \lambda_2 \geq \ldots \geq \lambda_n)$ (called nonnegative signatures). They reduce to Schur polynomials at $t=0$ and monomial symmetric polynomials at $t=1$, and play key roles in geometry, representation theory, and algebraic combinatorics. For this work, the most relevant role is that for $t=1/p$, $p$ prime, they are intimately related to $\GL_n(\Z_p)$-spherical functions on $\GL_n(\Q_p)$ \cite[Chapter V]{mac}, and consequently are important in $p$-adic random matrix theory \cite{van2020limits}. 

Explicitly they are defined by 
\begin{equation}\label{eq:hl_explicit_intro}
P_\lambda(x_1,\ldots,x_n;t) :=  \frac{1}{v_\lambda(t)} \sum_{\sigma \in S_n} \sigma\left(x_1^{\lambda_1} \cdots x_n^{\lambda_n} \prod_{1 \leq i < j \leq n} \frac{x_i-tx_j}{x_i-x_j}\right),
\end{equation}
where $\sigma$ acts by permuting the variables and $v_\lambda(t)$ is the normalizing constant such that the $x_1^{\lambda_1} \cdots x_n^{\lambda_n}$ term has coefficient $1$.  As with other families of symmetric functions one may define \emph{skew Hall-Littlewood polynomials} $P_{\lambda/\mu}$ in terms of two nonnegative signatures $\lambda,\mu$ of lengths $n,k$ by 
\begin{equation}\label{eq:skew_def_intro}
    P_\lambda(x_1,\ldots,x_n;t) = \sum_{\mu \in \Sig_k} P_{\lambda/\mu}(x_1,\ldots,x_{n-k};t) P_\mu(x_{n-k+1},\ldots,x_n;t),
\end{equation}
where $\Sig_k$ is the set of signatures of length $k$.

\subsection{Branching graphs from Hall-Littlewood polynomials.}

In 1976, Voiculescu \cite{voiculescu1976representations} classified the characters of the infinite unitary group $U(\infty)$, defined as the inductive limit of the chain $U(1) \subset U(2) \subset \ldots$. This was later shown to be equivalent to earlier results by Aissen, Edrei, Schoenberg and Whitney, stated without reference to representation theory. A similar story unfolded for the infinite symmetric group $S_\infty$ \cite{kerov1998boundary,vershik1981asymptotic}, related to the classical Thoma theorem \cite{thoma1964unzerlegbaren}. See \cite[\S 1.1]{borodin2012boundary} and the references therein for a more detailed exposition of both.

In later works such as \cite{vershik1982characters,okounkov1998asymptotics} the result for $U(\infty)$ was recast in terms of classifying the boundary of the so-called \emph{Gelfand-Tsetlin branching graph}, defined combinatorially in terms of Schur polynomials. This led to natural generalizations to other branching graphs defined in terms of degenerations of Macdonald polynomials $P_\lambda(x_1,\ldots,x_n;q,t)$, which feature two parameters $q,t$ and specialize to Hall-Littlewood polynomials when $q=0$; see \cite{borodin2012boundary,cuenca2018asymptotic,gorin2012q,okounkov1998asymptotics,olshanski2021macdonald}. In special cases these combinatorial results take on additional significance in representation theory and harmonic analysis; the Schur case was already mentioned, and two other special cases of the result of \cite{okounkov1998asymptotics} for the Jack polynomial case specialize to statements about the infinite symmetric spaces $U(\infty)/O(\infty)$ and $U(2\infty)/Sp(\infty)$. For the Young graph, the boundary of its natural Hall-Littlewood deformation was conjectured in equivalent form in \cite{kerov1992generalized}, proved in \cite{matveev2019macdonald}, and used to deduce results on infinite matrices over finite fields in \cite{cuenca2022infinite}. Surprisingly, however, the boundary of the Hall-Littlewood deformation of the Gelfand-Tsetlin graph has not previously been carried out, despite the fact that the appearance of Hall-Littlewood polynomials in harmonic analysis on $p$-adic groups suggests interpretations beyond the purely combinatorial setting. 

Let us describe the setup of the Hall-Littlewood branching graph; we refer to \cite[Chapter 7]{borodin2017representations} for an expository account of the general formalism of graded graphs and their boundaries. Let $\Sig_n = \{(\lambda_1,\ldots,\lambda_n) \in \Z^n: \lambda_1 \geq \ldots \geq \lambda_n\}$ be the set of \emph{integer signatures of length $n$}, not necessarily nonnegative. Allowing $\lambda$ to be an arbitrary signature, \eqref{eq:hl_explicit_intro} yields a symmetric `Hall-Littlewood Laurent polynomial' which we also denote $P_\lambda$. Let $\G$ be the weighted graph with vertices 
\[
\bigsqcup_{n \geq 1} \Sig_n
\]
and edges between $\lambda \in \Sig_n, \mu \in \Sig_{n+1}$ with weights 
\begin{equation}\label{eq:links_intro}
    L_n^{n+1}(\mu,\lambda) := P_{\mu/\lambda}(t^n;t) \frac{P_\lambda(1,\ldots,t^{n-1};t)}{P_\mu(1,\ldots,t^n;t)},
\end{equation}
known as \emph{cotransition probabilities}. These cotransition probabilities are stochastic by \eqref{eq:skew_def_intro},  so any probability measure on $\Sig_{n+1}$ induces another probability measure on $\Sig_n$. A sequence of probability measures $(M_n)_{n \geq 1}$ which is consistent under these maps is called a \emph{coherent system}. As these form a simplex, understanding coherent systems reduces to understanding the extreme points, called the \emph{boundary} of the branching graph. Our first main result is an explicit description of the boundary of $\G$. Here $\nu_x' = \#\{i: \nu_i \geq x\}$, $(a;t)_n = \prod_{i=1}^n (1-at^{i-1})$ is the $t$-Pochhammer symbol, 
\[
\sqbinom{a}{b}_t = \frac{(t;t)_a}{(t;t)_b (t;t)_{a-b}}
\]
is the $t$-binomial coefficient, and we let $\Sig_\infty$ be the set of weakly decreasing tuples of integers $(\mu_1,\mu_2,\ldots)$.

\begin{restatable}{thm}{boundary}\label{thm:boundary}
For any $t \in  (0,1)$, the boundary of $\G$ is naturally in bijection with $\Sig_\infty$. Under this bijection $\mu \in \Sig_\infty$ corresponds to the coherent system $(M^\mu_n)_{n \geq 1}$ defined explicitly by
\begin{equation*}
    M_n^\mu(\lambda) := 
    (t;t)_n \prod_{x \in \Z} t^{(\mu'_x - \lambda'_x)(n-\lambda'_x)} \sqbinom{\mu_x' - \lambda_{x+1}'}{\lambda_x' - \lambda_{x+1}'}_t 
\end{equation*}
for $\lambda \in \Sig_n$. 
\end{restatable}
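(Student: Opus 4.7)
The plan is to follow the Vershik--Kerov ergodic method for branching graphs (as in \cite{gorin2012q,cuenca2018asymptotic,matveev2019macdonald}), with the key technical input being explicit product formulas for principally specialized skew Hall--Littlewood polynomials --- these are exactly the formulas advertised as the technical backbone of the paper.

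\emph{Coherence.} First I would verify that $(M_n^\mu)_{n\ge 1}$ is a coherent system for each $\mu \in \Sig_\infty$. Writing $P_{\nu/\lambda}(t^n;t)$ via its one-variable column-by-column product, the pushforward identity
\[
\sum_{\nu \in \Sig_{n+1}} M_{n+1}^\mu(\nu)\, L_n^{n+1}(\nu,\lambda) = M_n^\mu(\lambda)
\]
factorizes over $x \in \Z$ and reduces to a $t$-Chu--Vandermonde identity for each column, together with telescoping of the $(t;t)_n$ prefactor. That $\sum_\lambda M_n^\mu(\lambda)=1$ is a principal specialization of the skew branching relation \eqref{eq:skew_def_intro} applied to a truncation of $\mu$.

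\emph{Boundary via the ergodic method.} For the converse direction, the Vershik--Kerov approximation theorem says every extreme coherent system $(M_n)$ has the form
\[
M_n(\lambda) = P_\lambda(1,t,\ldots,t^{n-1};t)\cdot \lim_{N\to\infty} \frac{P_{\lambda^{(N)}/\lambda}(t^n,\ldots,t^{N-1};t)}{P_{\lambda^{(N)}}(1,t,\ldots,t^{N-1};t)}
\]
for some sequence $\lambda^{(N)} \in \Sig_N$. Applying the multivariate principal-specialization formula for skew Hall--Littlewood polynomials, both numerator and denominator split into products of $t$-binomials indexed by $x \in \Z$. Matching factors one column at a time, this limit exists and is nontrivial precisely when, for each $x$, $(\lambda^{(N)})'_x$ stabilizes to some value $\mu'_x \in \Z_{\geq 0}$; these values assemble into a signature $\mu \in \Sig_\infty$, and substituting the asymptotics back produces exactly the stated product formula for $M_n^\mu(\lambda)$. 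Injectivity of $\mu \mapsto (M_n^\mu)$ is then immediate, since each $\mu'_x$ can be recovered from the supports of $M_n^\mu$ for $n$ large.

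\emph{Main obstacle.} The delicate step is the multivariate asymptotic analysis. Because signatures (unlike nonnegative partitions) can have arbitrarily negative parts, the column profile $(\lambda^{(N)})'_x$ is nontrivial on both ends and both the left and the right endpoints of its effective support can drift as $N \to \infty$. One must therefore argue that any escape to infinity of either endpoint forces the ratio above to $0$, and that otherwise all column counts converge subsequentially to give a well-defined $\mu \in \Sig_\infty$. Once the explicit skew product formula is in hand, each individual $t$-binomial asymptotic is a routine $q$-Pochhammer estimate, but uniformizing across all columns simultaneously is where the real new work lies compared to the nonnegative-partition Young-graph case treated in \cite{matveev2019macdonald}.
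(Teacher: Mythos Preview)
Your outline follows the Vershik--Kerov ergodic method as the paper does, and your identification of the main asymptotic obstacle (controlling the column profile $(\lambda^{(N)})'_x$ on both ends as $N\to\infty$) matches the paper's concerns in \Cref{thm:all_boundary1}. However, there is a genuine gap: you never prove that each $(M^\mu_n)_{n\ge 1}$ is actually \emph{extreme}. Your argument establishes (i) each $(M^\mu_n)$ is coherent, (ii) every extreme coherent system equals some $(M^\mu_n)$, and (iii) $\mu \mapsto (M^\mu_n)$ is injective. But (i)--(iii) only give an injection from $\partial\G$ into $\Sig_\infty$, not a bijection: nothing you have written rules out that some $(M^{\mu_0}_n)$ is a nontrivial mixture of other $(M^{\mu_i}_n)$. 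The paper closes this in \Cref{thm:all_boundary2} by writing $(M^\mu_n)$ as an integral over $\partial\G$ (via the general decomposition \Cref{thm:extreme_combinations}), pushing forward to a measure $\iota_*\pi$ on $\Sig_\infty$ using (ii), and then exploiting the explicit support structure of the $M^\nu_k$ (\Cref{thm:measure_vanishing} plus the inequality $M^\nu_k(\mu_1,\ldots,\mu_k)\le M^\mu_k(\mu_1,\ldots,\mu_k)$ for $\nu$ dominated by $\mu$) to force $\iota_*\pi=\delta_\mu$. This step is short but is not a consequence of anything in your sketch.

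Two smaller points. First, your claim that the limit is nontrivial ``precisely when $(\lambda^{(N)})'_x$ stabilizes to some $\mu'_x \in \Z_{\ge 0}$'' is not quite correct: when $\mu\in\Y+D$ one has $\mu'_x=\infty$ for $x\le D$, and $(\lambda^{(N)})'_x\to\infty$ there; the paper's \Cref{thm:conv_means_regular} explicitly splits into the stabilizing and the $\to\infty$ cases. Second, your coherence verification via a column-by-column $t$-Chu--Vandermonde is different from the paper's route, which instead recognizes $M^\mu_n(\lambda)$ as a (limiting) principally specialized link $L^\infty_n(\mu,\lambda)$ and deduces coherence directly from the branching rule for skew $P$ together with monotone convergence for $\mu\in\Sig_\infty^{unstable}$ (proof of \Cref{thm:find_boundary}). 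Your route may well work, but the factorization over $x$ is not as immediate as you suggest, since the $t$-binomial $\sqbinom{\mu'_x-\nu'_{x+1}}{\nu'_x-\nu'_{x+1}}_t$ couples adjacent columns.
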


We note that the product over $x \in \Z$ in fact has only finitely many nontrivial terms. The fact that the extreme measures have simple explicit formulas is unusual for results of this type--usually, the measures are characterized implicitly by certain generating functions.

The proof in \Cref{sec:branching_graphs} follows the general outline of the so-called \emph{Vershik-Kerov ergodic method}, as do those of many related results mentioned above. One of the closest works to our setting is \cite{gorin2012q}, which studies the Schur analogue with edge weights
\[
s_{\mu/\lambda}(t^n) \frac{s_\lambda(1,\ldots,t^{n-1})}{s_\mu(1,\ldots,t^n)}
\]
for $t \in (0,1)$, where $s_\lambda(x)$ is the Schur polynomial. The boundary is shown to be naturally in bijection with $\Sig_\infty$ as in our case\footnote{Our $t$ corresponds to the $q^{-1}$ in the notation \cite{gorin2012q}. The setting of \cite{gorin2012q} actually corresponds to $t>1$, and the boundary corresponds to infinite \emph{increasing} tuples of integers, but this statement is equivalent to ours upon interchanging signatures with their negatives--see the comment after Theorem 1.1 in \cite{gorin2012q}.}. 

The boundary classification results of \cite{gorin2012q} are generalized in \cite{cuenca2018asymptotic} to the Macdonald case with cotransition probabilities
\begin{equation}\label{eq:mac_links}
 P_{\mu/\lambda}(t^n;q,t=q^k) \frac{P_\lambda(1,\ldots,t^{n-1};q,t=q^k)}{P_\mu(1,\ldots,t^n;q,t=q^k)}   
\end{equation}
for any $k \in \N$, and the boundary is again identified with $\Sig_\infty$; when $k=1$ this reduces to the result of \cite{gorin2012q}. We do not see how \Cref{thm:boundary} could be accessed by the methods of \cite{cuenca2018asymptotic} or the newer work \cite{olshanski2021macdonald}, which treats the related \emph{Extended Gelfand-Tsetlin graph} with weights coming from Macdonald polynomials with arbitrary $q,t \in (0,1)$. Instead, we rely on explicit expressions, \Cref{thm:finite_skew_fn_computation} and \Cref{thm:skew_formula}, for the skew Hall-Littlewood polynomials appearing in \eqref{eq:links_intro}. This means that \Cref{thm:boundary} gives explicit formulas for the extreme coherent measures, while in previous works they were defined implicitly by certain generating functions.

\subsection{Ergodic measures on infinite $p$-adic random matrices.} 

In the special case $t=1/p$, the purely combinatorial results on Hall-Littlewood polynomials have consequences in $p$-adic random matrix theory, and we may deduce results of \cite{bufetov2017ergodic,assiotis2020infinite} from \Cref{thm:boundary} above. We refer to \Cref{sec:matrix_products} for basic background on the $p$-adic integers $\Z_p$ and $p$-adic field $\Q_p$. The group $\GL_n(\Z_p) \times \GL_m(\Z_p)$ acts on $\Mat_{n \times m}(\Q_p)$ by left- and right multiplication, and the orbits of this action on nonsingular matrices are parametrized by the set $\bSig_{\min(m,n)}$ of `extended' signatures with parts allowed to be equal to $-\infty$. Explicitly, for any $A \in \Mat_{n \times m}(\Q_p), n \leq m$ there exist $U \in \GL_n(\Z_p), V \in \GL_m(\Z_p)$ such that 
\[
UAV = \diag_{n \times m}(p^{-\lambda_1},\ldots,p^{-\lambda_n})
\]
for some $\lambda \in \bSig_n$, where we take $p^\infty=0$ by convention. The extended signature $\lambda$ is unique, and we refer to the $\lambda_i$ as the \emph{singular numbers} of $A$ and write $\SN(A) = \lambda \in \bSig_n$.

For fixed $n \leq m$, the $\GL_n(\Z_p) \times \GL_m(\Z_p)$ bi-invariant measures on $\Mat_{n \times m}(\Q_p)$ are all convex combinations of those parametrized by $\bSig_n$ via $U \diag_{n \times m}(p^{\lambda_1},\ldots,p^{\lambda_n}) V$ with $U,V$ distributed by the Haar measures on $\GL_n(\Z_p),\GL_m(\Z_p)$ respectively. One may define $\GL_\infty(\Z_p)$ as a direct limit of the system 
\[
\GL_1(\Z_p) \subset \GL_2(\Z_p) \subset \ldots
\]
and it is natural to ask for the extension of this result to infinite matrices, i.e. for the extreme points in the set of $\GL_\infty(\Z_p)$ bi-invariant measures on $\Mat_{\infty \times \infty}(\Q_p)$. This problem was previously solved in \cite{bufetov2017ergodic}, which gave an explicit family of measures in bijection with $\bSig_\infty$. We give a new proof that the extreme measures are naturally parametrized by $\bSig_\infty$ in \Cref{thm:recover_BQ} below.

\begin{restatable}{thm}{recoverBQ}\label{thm:recover_BQ}
The set of extreme $\GL_\infty(\Z_p) \times \GL_\infty(\Z_p)$-invariant measures on $\Mat_{\infty \times \infty}(\Q_p)$ is naturally in bijection with $\bSig_\infty$. Under this bijection, the measure $E_\mu$ corresponding to $\mu \in \bSig_\infty$ is the unique measure such that its $n \times m$ truncations are distributed by the unique $\GL_n(\Z_p) \times \GL_m(\Z_p)$-invariant measure on $\Mat_{n \times m}(\Q_p)$ with singular numbers distributed according to the measure $M^\mu_{m,n}$ defined in \Cref{thm:double_boundary} in the case $t=1/p$.
\end{restatable}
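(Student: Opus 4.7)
The plan is to translate the classification into the language of the Hall-Littlewood branching graph and apply \Cref{thm:double_boundary}, the rectangular/extended-signature analogue of \Cref{thm:boundary}. Its technical core lies in identifying row- and column-deletion of infinite $p$-adic matrices with Hall-Littlewood cotransition probabilities at $t = 1/p$.

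I would first establish a bijection between $\GL_\infty(\Z_p) \times \GL_\infty(\Z_p)$-invariant probability measures on $\Mat_{\infty \times \infty}(\Q_p)$ and coherent systems on the ``double'' Hall-Littlewood branching graph whose edges come in both $m$- and $n$-flavors. Given such a measure $E$, the pushforward under truncation $A \mapsto A|_{n \times m}$ to the top-left block is $\GL_n(\Z_p) \times \GL_m(\Z_p)$-bi-invariant, and by the finite-dimensional classification recalled just before the theorem it is determined uniquely by the distribution of its singular numbers in $\bSig_{\min(m,n)}$. This defines a family $(M_{n,m})_{n,m \geq 1}$ of probability measures; conversely, a consistent such family assembles into a bi-invariant measure on $\Mat_{\infty \times \infty}(\Q_p)$ by a Kolmogorov-type argument, since the finite-dimensional distributions and the group action determine everything.

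Next, I would compute the Markov kernel on singular numbers induced by passing from $A|_{(n+1) \times m}$ to $A|_{n \times m}$ for $A$ drawn from a $\GL_{n+1}(\Z_p) \times \GL_m(\Z_p)$-invariant law, and identify it with the Hall-Littlewood cotransition weight $L_{n,m}^{n+1,m}$ at $t = 1/p$, and symmetrically for columns. This is the step where Macdonald's identification \cite[Ch.\ V]{mac} of Hall-Littlewood polynomials with $\GL_n(\Z_p)$-zonal spherical functions enters: the probability of a specific singular-number transition becomes a ratio of $p$-adic orbit volumes, and the relevant evaluation of $P_{\mu/\lambda}$ at a single geometric series is exactly the content of \Cref{thm:finite_skew_fn_computation}. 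Once this matching is in place, consistency of $(M_{n,m})$ under truncation is precisely coherence on the branching graph, so extremal bi-invariant measures correspond to extremal coherent systems; \Cref{thm:double_boundary} identifies these with $\bSig_\infty$ and realizes them explicitly via the marginals $M^\mu_{m,n}$, giving $E_\mu$ after Kolmogorov extension.

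The main obstacle is not \Cref{thm:double_boundary} itself but rather the cotransition identification above together with the bookkeeping of $-\infty$ singular numbers arising from rank-deficient truncations. The former is essentially a $p$-adic volume computation, but it must be normalized to match \eqref{eq:links_intro} exactly, and it must be done for both row and column deletions so that the resulting coherent system is a genuine coherent system on the doubly-indexed graph. The latter forces the branching graph and the boundary theorem to be formulated over $\bSig$ rather than $\Sig$, and is the motivation for developing an ``extended'' version of the boundary theorem rather than applying \Cref{thm:boundary} directly.
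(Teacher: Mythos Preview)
Your proposal is correct and follows essentially the same route as the paper: identify bi-invariant measures on $\Mat_{\infty \times \infty}(\Q_p)$ with coherent systems on the doubly-graded graph $\GG$ via the singular-number distributions of finite truncations, match row/column deletion with the Hall-Littlewood links at $t=1/p$, and then invoke \Cref{thm:double_boundary}. The only substantive difference is that the paper does not perform the $p$-adic volume computation you sketch; the cotransition identification is imported as \Cref{thm:p-adic_corners} from \cite{van2020limits}, so \Cref{thm:finite_skew_fn_computation} plays no direct role in this step (its role in the paper is upstream, in the proof of \Cref{thm:double_boundary}).
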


Our proof goes by deducing this parametrization by $\bSig_\infty$ from an augmented version of the parametrization by $\Sig_\infty$ appearing in \Cref{thm:boundary}. The key fact which relates the random matrix setting to the purely combinatorial setting is a result, stated later as \Cref{thm:p-adic_corners}, which was proven originally in \cite{van2020limits}. This result relates the distribution of singular numbers of a $p$-adic matrix after removing a row or column to the cotransition probabilities \eqref{eq:links_intro}. 

We note that while Hall-Littlewood polynomials are not mentioned by name in \cite{bufetov2017ergodic}, it should be possible to extrapolate many of their Fourier analytic methods to statements about Hall-Littlewood polynomials at general $t$. Our methods, which are based on explicit formulas for certain skew Hall-Littlewood polynomials, nonetheless differ substantially from those of \cite{bufetov2017ergodic} in a manner which is not merely linguistic. Let us also be clear that while both \Cref{thm:recover_BQ} and \cite{bufetov2017ergodic} show that the extreme measures are parametrized by $\mu \in \bSig_\infty$, it is not obvious from the descriptions that the measures corresponding to a given $\mu \in \bSig_\infty$ under \cite{bufetov2017ergodic} and \Cref{thm:recover_BQ} are in fact the same. A separate argument, assuming the result of \cite{bufetov2017ergodic}, is required to prove that the two parametrizations by $\bSig_\infty$ match, see \Cref{thm:emu=temu}. This additionally provides a computation of the distribution of singular numbers of finite corners of matrices drawn from the measures in \cite{bufetov2017ergodic}, which is new. We refer to \Cref{rmk:BQ_differences} for more detail on the differences between \Cref{thm:recover_BQ} and \cite[Theorem 1.3]{bufetov2017ergodic}, in particular an explanation of how our results carry over to a general non-Archimedean local field as is done in \cite{bufetov2017ergodic}. We mention also that the other main result of \cite{bufetov2017ergodic} is a classification of the extreme measures on infinite symmetric matrices $\{A \in \Mat_{\infty \times \infty}(\Q_p): A^T = A\}$ invariant under $\GL_\infty(\Z_p)$; it would be interesting to have an analogous Hall-Littlewood proof of this result as well, see \Cref{rmk:symm_HL_proof} for further discussion of possible strategy and difficulties.

\begin{rmk}
In addition to \cite{okounkov1998asymptotics}, another work somewhat similar in spirit to \Cref{thm:boundary} and \Cref{thm:recover_BQ} is \cite{assiotis2021boundary}. This work finds the boundary of a certain branching graph defined via multivariate Bessel functions--another degeneration of Macdonald polynomials--and related to $\beta$-ensembles at general $\beta$. In the classical values $\beta=1,2,4$ this recovers results on branching graphs coming from random matrix theory. Results such as ours in terms of Hall-Littlewood polynomials may be regarded as extrapolations of $p$-adic random matrix theory to arbitrary real $p>1$ in the same way $\beta$-ensembles extrapolate classical random matrix theory to real $\beta > 0$, see also \cite[Remark 13]{van2020limits}. 
\end{rmk}

\subsection{Ergodic decompositions of $p$-adic Hua measures.}

For finite random matrices over $\Q_p$ or $\C$, one wishes to compute the distribution of singular numbers, singular values or eigenvalues of certain distinguished ensembles such as the classical GUE, Wishart and Jacobi ensembles (over $\C$), or the additive Haar measure over $\Z_p$. The infinite-dimensional analogue of this problem is to compute how distinguished measures on infinite matrices decompose into extreme points, which correspond to ergodic measures. Such a decomposition is given by a probability measure on the space of ergodic measures, which in our case corresponds to a probability measure on $\bSig_\infty$.

One such distinguished family of measures on $p$-adic matrices is given by the \emph{$p$-adic Hua measures} defined in \cite{neretin2013hua}, which are analogues of the complex Hua-Pickrell measures\footnote{See \cite{borodin2001infinite}, which coined the term for these measures, for an historical discussion of these measures and summary of the contents of the earlier works \cite{hua1963harmonic,pickrell1987measures}.}. There is a $p$-adic Hua measure $\bbM_n^{(s)}$ on $\Mat_{n \times n}(\Q_p)$ for each $n \in \Z_{\geq 1}, s \in \R_{>-1}$, which is defined by an explicit density with respect to the underlying additive Haar measure on $\Mat_{n \times n}(\Q_p)$, see \Cref{def:p-hua}. A motivating property of these measures is that they are consistent under taking corners, and hence define a measure $\bbM_\infty^{(s)}$ on $\Mat_{\infty \times \infty}(\Q_p)$. The decomposition of this measure into ergodic measures on $\Mat_{\infty \times \infty}(\Q_p)$ was computed recently in \cite{assiotis2020infinite}, and we reprove the result using the aforementioned relation between $p$-adic matrix corners and the Hall-Littlewood branching graph $\G$. Below $E_\mu$ is as in \Cref{thm:recover_BQ}, $\Y$ is the set of integer partitions, $Q_\lambda$ is the dual normalization of the Hall-Littlewood symmetric function, and the normalizing constant $\Pi(1,\ldots;u,\ldots)$ is the so-called \emph{Cauchy kernel}--see \Cref{sec:prelim} for precise definitions.

\begin{restatable}{thm}{recoverA}\label{thm:recover_assiotis}
Fix a prime $p$ and real parameter $s > -1$, and let $t=1/p$ and $u=p^{-1-s}$. Then the infinite $p$-adic Hua measure $\bbM_\infty^{(s)}$ decomposes into ergodic measures according to
\begin{equation}\label{eq:phua_decomp}
    \bbM_\infty^{(s)} = \sum_{\mu \in \Y} \frac{P_\mu(1,t,\ldots;t) Q_\mu(u,ut,\ldots;t)}{\Pi(1,\ldots;u,\ldots)} E_\mu
\end{equation}
where $E_\mu$ is as defined in \Cref{thm:recover_BQ}.
\end{restatable}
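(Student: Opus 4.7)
The plan is to invoke the ergodic decomposition on the Hall-Littlewood branching graph $\G$ and compute the mixing weights explicitly via the Hall-Littlewood Cauchy identity. By \Cref{thm:p-adic_corners}, the singular-number distributions $M_n^{(s)} := (\SN)_* \bbM_n^{(s)}$ of the $n \times n$ corners of $\bbM_\infty^{(s)}$ form a coherent system on $\G$. By \Cref{thm:boundary} together with \Cref{thm:recover_BQ}, the ergodic decomposition of $\bbM_\infty^{(s)}$ into the measures $E_\mu$ is equivalent to writing this coherent system as a convex combination $M_n^{(s)}(\lambda) = \int M_n^\mu(\lambda)\, d\nu(\mu)$ of the extreme coherent systems from \Cref{thm:boundary}; the goal is then to identify $\nu$ with the weight appearing in \eqref{eq:phua_decomp}.

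The first step is to identify $M_n^{(s)}$ explicitly as a Hall-Littlewood Cauchy measure. Combining the classical formula (cf.\ \cite[Ch.~V]{mac}) for the Haar volume in $\Mat_{n \times n}(\Z_p)$ of matrices with singular numbers $\lambda$, which involves the principal specialization $P_\lambda(1,t,\ldots,t^{n-1};t)$, with the factor $|\det A|_p^s$ coming from the Hua density in \Cref{def:p-hua}, a direct rearrangement should give
\[
M_n^{(s)}(\lambda) \;=\; \frac{P_\lambda(1,t,\ldots,t^{n-1};t)\; Q_\lambda(u,ut,ut^2,\ldots;t)}{\Pi(1,t,\ldots,t^{n-1};\, u,ut,\ldots)},
\]
where $t=1/p$, $u=p^{-1-s}$, and $\Pi$ denotes the Hall-Littlewood Cauchy kernel. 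This is essentially an algebraic manipulation of principal specializations, and may already be implicit in \cite{van2020limits}.

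The second step is a combinatorial identity: the explicit product formula for $M_n^\mu(\lambda)$ in \Cref{thm:boundary} must be recognized as
\[
M_n^\mu(\lambda) \;=\; \frac{P_\lambda(1,t,\ldots,t^{n-1};t)}{P_\mu(1,t,\ldots;t)}\, Q_{\mu/\lambda}(t^n, t^{n+1},\ldots;t)
\]
up to the appropriate Cauchy normalization. This identification should follow from \Cref{thm:finite_skew_fn_computation} and \Cref{thm:skew_formula}, by comparing the $t$-binomial and power-of-$t$ factors in the product over $x \in \Z$ from \Cref{thm:boundary} term-by-term with the known product formula for $Q_{\mu/\lambda}$ evaluated on the geometric progression $(t^n, t^{n+1},\ldots)$. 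Granting this, summing over $\mu \in \Y$ with weights $P_\mu(1,t,\ldots;t) Q_\mu(u,ut,\ldots;t) / \Pi(1,t,\ldots;\, u,ut,\ldots)$ and applying the skew Hall-Littlewood Cauchy identity
\[
\sum_\mu P_\mu(x;t)\, Q_{\mu/\lambda}(y;t) \;=\; P_\lambda(x;t) \prod_{i,j}\frac{1 - t x_i y_j}{1 - x_i y_j}
\]
with $x = (u,ut,\ldots)$ and $y = (t^n, t^{n+1},\ldots)$ collapses the $\mu$-sum exactly onto the Hall-Littlewood Cauchy form of $M_n^{(s)}$ from the first step, using that $\Pi(1,t,\ldots;\, u,ut,\ldots)$ factors as $\Pi(1,t,\ldots,t^{n-1};\, u,ut,\ldots)\cdot \Pi(t^n,t^{n+1},\ldots;\, u,ut,\ldots)$. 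Uniqueness of the ergodic decomposition, together with the observation that $\bbM_\infty^{(s)}$ is supported on $\Mat_{\infty\times\infty}(\Z_p)$ so the mixing measure concentrates on $\Y \subset \bSig_\infty$, then yields \eqref{eq:phua_decomp}.

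The main obstacle is the skew Hall-Littlewood identification of $M_n^\mu(\lambda)$ in the second step: matching the explicit product-over-$x\in\Z$ formula from \Cref{thm:boundary} with the product form of $Q_{\mu/\lambda}$ at a geometric progression requires careful bookkeeping of exponents of $t$ and of $t$-binomial coefficients, especially at the indices $x$ where $\lambda'_x$ and $\mu'_x$ transition between equality and strict inequality. Once that algebraic identification is established, the Cauchy identity and uniqueness of the ergodic decomposition finish the proof.
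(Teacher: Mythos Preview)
Your proposal has a genuine gap that stems from a misreading of where the Hua measures live. The finite $p$-adic Hua measure $\bbM_n^{(s)}$ is a measure on $\Mat_{n\times n}(\Q_p)$, not on $\Mat_{n\times n}(\Z_p)$, and its singular-number pushforward $M_n^{(s)}$ is supported on \emph{all} of $\Sig_n$, including signatures with negative parts. Indeed \Cref{thm:finite_phua_SNs} gives
\[
M_n^{(s)}(\nu) \;=\; \frac{(u;t)_n^2}{(u;t)_{2n}}\, u^{|\nu^+|}\, t^{(2n-1)(|\nu^+| - |\nu|) + 2n(\nu)}\, \frac{(t;t)_n^2}{\prod_{x \in \Z} (t;t)_{m_x(\nu)}},
\]
which depends on $|\nu^+|$ rather than $|\nu|$. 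Your claimed identification $M_n^{(s)}(\lambda) = P_\lambda(1,\ldots,t^{n-1})Q_\lambda(u,ut,\ldots)/\Pi$ is therefore false for $\lambda$ with negative parts (and $Q_\lambda(u,ut,\ldots)$ is not even defined for such $\lambda$ in the paper's conventions). Relatedly, your justification that the mixing measure concentrates on $\Y$ because ``$\bbM_\infty^{(s)}$ is supported on $\Mat_{\infty\times\infty}(\Z_p)$'' is incorrect, even though the conclusion turns out to be right.

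A second, related issue is your framing via $\G$ alone. The $n\times n$ corners of $E_\mu$ are governed by the doubly-graded graph $\GG$ of \Cref{thm:double_boundary}, and $M^\mu_{n,n}(\nu)$ is itself a sum $\sum_{\lambda \in \Sig_n^{\geq 0}} M^\mu_n(\lambda)\, M^\lambda_1(\nu)$ with $M^\lambda_1$ coming from \Cref{thm:finite_boundary}. The paper's proof therefore involves \emph{two} nested sums (over $\mu \in \Y$ and over an intermediate $\lambda \in \Sig_n^{\geq 0}$), not one. After collapsing the $\mu$-sum by the Cauchy identity as you suggest, one is left with $\sum_{\lambda \in \Sig_n^{\geq 0}} Q_{\lambda/\nu}(t^n,\ldots)P_\lambda(u,\ldots,ut^{n-1})$, and the restriction $\lambda \in \Sig_n^{\geq 0}$ prevents a direct second application of the Cauchy identity when $\nu$ has negative parts. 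The paper resolves this by using the explicit formula of \Cref{thm:skew_formula} to relate $Q_{\lambda/\nu}(t^n,\ldots)$ to $Q_{\lambda/\nu^+}(t^n,\ldots)$ via an explicit prefactor, after which the Cauchy identity does apply and produces exactly the $|\nu^+|$-dependent expression above. This step---handling the negative parts of $\nu$---is the substantive content your outline misses.
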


The key ingredient in the original proof of \Cref{thm:recover_assiotis} given previously in \cite{assiotis2020infinite} is a certain Markov chain which generates the finite Hua measures $\bbM_n^{(s)}$, and which was guessed from Markov chains appearing in similar settings \cite{fulman_main}. The arguments there did not use Hall-Littlewood polynomials, but the limiting measure on $\bSig_\infty$ which describes the ergodic decomposition was observed in \cite{assiotis2020infinite} to be the so-called Hall-Littlewood measure in \eqref{eq:phua_decomp}, by matching explicit formulas. From our perspective, by contrast, the fact that this measure is a Hall-Littlewood measure is natural and is key to the proof.  

\subsection{From Hall-Littlewood polynomials to cokernels of products of $p$-adic random matrices.}

In another direction, random $p$-adic matrices have been subject to much activity in arithmetic statistics going back to the 1983 conjectures of Cohen and Lenstra \cite{cohen-lenstra} on class groups of quadratic imaginary number fields, and their interpretation via random matrices in \cite{friedman-washington}. These works interpret the singular numbers of a random matrix $A \in \Mat_n(\Z_p)$ as specifying a random abelian $p$-group: if $\SN(A) = -\lambda$ with $\lambda_n \geq 0$, then viewing $A$ as a map $\Z_p^n \to \Z_p^n$ one has
\[
\coker(A) = \Z_p^n/\Im(A) \cong \bigoplus_{i=1}^n \Z/p^{\lambda_i}\Z =: G_\lambda(p).
\]
For $A_n \in \Mat_{n \times n}(\Z_p)$ with iid entries distributed according to the additive Haar measure on $\Z_p$, the result of \cite{friedman-washington} implies
\begin{equation}\label{eq:cl_intro}
    \lim_{n \to \infty} \Pr(\coker(A_n) \cong G_\lambda(p)) = \frac{1}{Z}P_\lambda(1,t,\ldots;t) Q_\lambda(t,t^2,\ldots;t) = \frac{1}{Z} |\Aut(G_\lambda(p))|^{-1}
\end{equation}
where $t=1/p$ and $Z = \Pi(1,t,\ldots;t,t^2,\ldots)$ is a normalizing constant. For odd $p$ this distribution was conjectured to describe the $p$-torsion parts of class groups random quadratic imaginary number fields ordered by discriminant, and is often called the Cohen-Lenstra distribution \cite{cohen-lenstra}.

The next result generalizes the finite-$n$ version of \eqref{eq:cl_intro} to arbitrary products of independent additive Haar matrices. Here $n(\lambda) := \sum_{i=1}^n (i-1)\lambda_i$ for $\lambda \in \Sig_n$.

\begin{thm}\label{thm:product_intro}
Let $t=1/p$, fix $n \geq 1$, and let $A_i$ be iid $n \times n$ matrices with iid entries distributed by the additive Haar measure on $\Z_p$. Then the joint distribution of $\coker(A_1), \coker(A_2A_1), \ldots$ is given by 
\begin{align}\label{eq:product_intro}
\begin{split}
    & \Pr(\coker(A_i \cdots A_1) \cong G_{\lambda(i-1)}(p) \text{ for all }i=1,\ldots,k) \\
     &= (t;t)_n^k t^{n(\lambda(k))} \prod_{1 \leq i \leq k} \prod_{x \in \Z} t^{\binom{\lambda(i)_x' - \lambda(i-1)_x'+1}{2}} \sqbinom{\lambda(i)_x' - \lambda(i-1)_{x+1}'}{\lambda(i)_x' - \lambda(i)_{x+1}'}_t
\end{split}
\end{align}
for any $k$ and $\lambda(1),\ldots,\lambda(k) \in \Sig_n^{\geq 0}$, where we take $\lambda(0) = (0,\ldots,0)$ in \eqref{eq:product_intro}.
\end{thm}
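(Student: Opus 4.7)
The plan is to factor the joint distribution into one-step Markov transitions and evaluate each using the explicit skew Hall-Littlewood formulas proved earlier in the paper. By bi-invariance of the Haar measure on $\Mat_n(\Z_p)$, conditional on $\SN(B) = -\mu$ the matrix $B$ is uniformly distributed on its $\GL_n(\Z_p)\times\GL_n(\Z_p)$-orbit, so left-multiplication by an independent Haar matrix $A$ produces an $\SN$-distribution depending only on $\mu$. Setting $T_n(\mu,\lambda) := \Pr(\SN(AB) = -\lambda \mid \SN(B) = -\mu)$, the sequence $(\SN(A_i\cdots A_1))_{i \geq 0}$ is then a Markov chain starting from $\lambda(0) = 0$, and the joint probability in the theorem factors as $\prod_{i=1}^k T_n(\lambda(i-1),\lambda(i))$. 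The problem reduces to a closed-form expression for $T_n$.

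To compute $T_n(\mu,\lambda)$ I would extend the corners-to-Hall-Littlewood dictionary of \Cref{thm:p-adic_corners} (which handles row/column removal) to left-multiplication by a full Haar square matrix: $T_n(\mu,\lambda)$ should appear as a ratio of principal specializations $P_\lambda(1,t,\ldots,t^{n-1};t)/P_\mu(1,t,\ldots,t^{n-1};t)$ times a skew Hall-Littlewood polynomial $P_{\lambda/\mu}$ evaluated at a specialization encoding the Haar measure. Substituting the multi-variable skew formula of \Cref{thm:finite_skew_fn_computation} (or \Cref{thm:skew_formula}) and the standard product for the principal specialization should give
\[
T_n(\mu,\lambda) = (t;t)_n\, t^{n(\lambda)-n(\mu)} \prod_{x\in\Z} t^{\binom{\lambda_x' - \mu_x' + 1}{2}} \sqbinom{\lambda_x' - \mu_{x+1}'}{\lambda_x' - \lambda_{x+1}'}_t.
\]
Multiplying these transitions over $i = 1,\ldots,k$, the $(t;t)_n$ constants combine into $(t;t)_n^k$, the exponents $n(\lambda(i))-n(\lambda(i-1))$ telescope (using $n(\lambda(0))=0$) into the single global factor $t^{n(\lambda(k))}$, and the remaining prefactors and $t$-binomials assemble into the double product appearing in the statement.

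The hard part will be obtaining $T_n(\mu,\lambda)$ in the above closed form. Unlike the corners setting of \Cref{thm:p-adic_corners}, which restricts $\lambda/\mu$ to a horizontal strip and reduces to a single-variable (Pieri-type) evaluation of $P_{\lambda/\mu}$, left-multiplication by a Haar square matrix allows arbitrary inclusions $\mu \subseteq \lambda$, forcing the use of the full multi-variable skew formula. The combinatorial heart of the proof is then reorganizing the many $t^{\binom{\cdot}{2}}$ prefactors and $t$-binomials produced by that skew formula into the compact shape $t^{\binom{\lambda_x' - \mu_x' + 1}{2}} \sqbinom{\lambda_x' - \mu_{x+1}'}{\lambda_x' - \lambda_{x+1}'}_t$; once this identity is in hand, the telescoping step producing $t^{n(\lambda(k))}$ is routine.
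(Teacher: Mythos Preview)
Your overall architecture matches the paper's: factor the joint law as a product of one-step transitions, evaluate each transition via a principally specialized skew Hall--Littlewood formula, and telescope the $t^{n(\cdot)}$ exponents. Your closed form for $T_n(\mu,\lambda)$ is also correct, and once it is in hand the telescoping to \eqref{eq:product_intro} is exactly as you describe.

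There are, however, two concrete misidentifications in how you obtain $T_n$. First, the transition kernel for multiplication by a Haar square matrix is built from the \emph{dual} skew function $Q_{\lambda/\mu}$ at the infinite principal specialization $t,t^2,\ldots$, together with the Cauchy kernel $\Pi(1,\ldots,t^{n-1};t,t^2,\ldots)^{-1}$, not from $P_{\lambda/\mu}$; this is exactly the Cauchy-type dynamics \eqref{eq:general_cauchy_dynamics}. Second, this Hall--Littlewood description of matrix products is not obtained by extending the corners result \Cref{thm:p-adic_corners}; removing a row/column and left-multiplying by a Haar matrix are different operations, governed respectively by $P$- and $Q$-type branching. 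The paper does not derive this step here at all but simply quotes it as \Cref{thm:hl_from_p_paper} from \cite[Corollary~3.4]{van2020limits}.

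With the correct input in place, the step you flag as the ``hard part'' is in fact short: plugging \eqref{eq:princ_skew_Q} of \Cref{thm:skew_formula} for $Q_{\lambda/\mu}(t,t^2,\ldots)$ and \Cref{thm:hl_principal_formulas} for $P_\lambda/P_\mu$ into the Cauchy dynamics gives \Cref{thm:explicit_cauchy_dynamics} directly, with the $t$-binomial already in the form $\sqbinom{\lambda_x'-\mu_{x+1}'}{\lambda_x'-\lambda_{x+1}'}_t$ and the exponent $|\lambda|-|\mu|+n(\lambda/\mu)=\sum_x\binom{\lambda_x'-\mu_x'+1}{2}$ appearing without further reorganization. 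So the difficulty is located in the matrix-product/Hall--Littlewood dictionary (imported from \cite{van2020limits}), not in the subsequent combinatorics.
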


Note that the product over $x \in \Z$, which may appear uninviting, in fact has only finitely many nontrivial terms. As a special case one obtains the prelimit version of \eqref{eq:cl_intro}, due to \cite{friedman-washington}: for $A \in \Mat_n(\Z_p)$ with iid additive Haar entries,
\begin{equation}\label{eq:fw_intro}
\Pr(\coker(A) \cong G_\lambda(p)) = t^{2n(\lambda)+|\lambda|} \frac{(t;t)_n^2}{\prod_{i \geq 0} (t;t)_{m_i(\lambda)}}
\end{equation}
where $|\lambda| = \sum_i \lambda_i$.

At first sight it might be unclear why \Cref{thm:product_intro} is a natural generalization to undertake, but we believe it to be in light of the wealth of other natural $p$-adic random matrix ensembles which have found applications in number theory and combinatorics. The iid Haar measure on nonsquare matrices was used in \cite{wood2015random} to model $p$-torsions of class groups of real quadratic number fields, and the corresponding measure on cokernels was also related to Hall-Littlewood polynomials in \cite{van2020limits}. Measures on symmetric and antisymmetric $A$ have been studied in \cite{wood2017distribution} and \cite{bhargava2013modeling}, respectively, as models of sandpile groups of random graphs and Tate-Shafarevich groups of elliptic curves, and have been related to Hall-Littlewood polynomials in \cite{fulman2016hall} and \cite{fulman2018random} respectively. Given the utility of these models, it seems that any natural enough distribution on $p$-adic random matrices is likely to model some class of random abelian $p$-groups appearing in nature.

An expression for the probability in \eqref{eq:product_intro} was derived in \cite{van2020limits}, but in terms of skew Hall-Littlewood polynomials rather than the explicit formula appearing above. That result was suitable for asymptotics as the number of products went to infinity, but seemed less adapted to studying the kinds of arithmetic questions studied in the literature, such as the probability that the cokernel is a cyclic group, for a finite number of products. The explicit nature of \Cref{thm:product_intro} appears more promising in this regard.

We note that one may give efficient sampling algorithms for the distribution in \Cref{thm:product_intro} by interpreting the RHS of \eqref{eq:product_intro} in terms of steps of a certain Markov chain, generalizing \cite[Theorem 10]{fulman_main}. It should also be possible to interpret the $n \to \infty$ limit of \eqref{eq:product_intro} in terms of the appropriate notion of automorphisms of a nested sequence of abelian $p$-groups, generalizing the $k=1$ case \eqref{eq:cl_intro}; we have not attempted to address this question but hope it will be taken up in the future. 

\subsection{Proof methods and skew Hall-Littlewood formulas.}\label{subsec:skew_hl}

A classical fact which follows from \eqref{eq:hl_explicit_intro} is that when a geometric sequence with common ratio $t$ is substituted in for the variables $x_i$, the so-called \emph{principal specialization}, the Hall-Littlewood polynomial takes a particularly simple form: 
\begin{equation}\label{eq:princ_intro}
    P_\lambda(u,ut,\ldots,ut^{n-1};t) = u^{|\lambda|} t^{n(\lambda)} \frac{(t;t)_n}{\prod_{x \in \Z} (t;t)_{m_x(\lambda)}}.
\end{equation}
An explicit formula such as \eqref{eq:hl_explicit_intro} is lacking for the skew Hall-Littlewood polynomials, but their principal specializations still have a relatively simple form. In the case of a specialization $u,ut,\ldots$ in infinitely many variables this reads 
\begin{equation}\label{eq:inf_skew_princ_intro}
    P_{\mu/\lambda}(u,ut,\ldots; t) = (t;t)_\infty u^{|\mu|-|\lambda|} t^{n(\mu/\lambda)}\prod_{x > 0} \frac{(t^{1+\mu_x'-\lambda_x'};t)_{m_x(\lambda)}}{(t;t)_{m_x(\mu)}}
\end{equation}
for integer partitions $\mu,\lambda$, where $n(\mu/\lambda)$ generalizes $n(\lambda)$ to skew diagrams--see \Cref{def:gen:n(lambda)}. The formula \eqref{eq:inf_skew_princ_intro} above is equivalent to a formula for the modified Hall-Littlewood polynomials \cite[Theorem 3.1]{kirillov1998new}, see also \cite{garbali2020modified,warnaar2013remarks}. We give a different proof in \Cref{sec:skew_formulas} by degenerating formulas for principally specialized skew higher spin Hall-Littlewood polynomials, recently shown in \cite{borodin2018higher_original_paper}, partially because we additionally need the result when the geometric progression is finite, see \Cref{thm:finite_skew_fn_computation}. These formulas are the key technical input in the proof of \Cref{thm:boundary}, and the proof of that result in \Cref{sec:branching_graphs} relies on using these formulas to prove certain estimates (which are not extremely difficult, once one has the formulas). They also imply nontrivial variants of the \emph{skew Cauchy identity} in the special case of principal specializations, which are derived and used in the algebraic manipulations in the proof of \Cref{thm:recover_assiotis}. In \Cref{thm:product_intro} their role is even more pronounced, as the result essentially follows directly from the formulas and the existing result \cite[Corollary 3.4]{van2020limits} giving the probability in \Cref{thm:product_intro} in terms of skew Hall-Littlewood polynomials. 

\subsection{Outline.}
In \Cref{sec:prelim} we set up notation concerning Hall-Littlewood polynomials, and in \Cref{sec:skew_formulas} we prove formulas for principally specialized skew Hall-Littlewood polynomials. These form the main tool for the classification of $\partial \G$ in \Cref{sec:branching_graphs}. In \Cref{sec:infinite_matrices} we explain the setup of $p$-adic random matrix theory, prove an augmented boundary result (\Cref{thm:double_boundary}) tailored to this situation, and use it to prove \Cref{thm:recover_BQ} and \Cref{thm:recover_assiotis}. Finally, in \Cref{sec:matrix_products} we prove \Cref{thm:product_intro}. Finally, in \Cref{sec:appendix_markov} we prove a result about Markov dynamics on $\partial \G$ which is motivated by a parallel work \cite{van2021q}.

\addtocontents{toc}{\protect\setcounter{tocdepth}{1}}
\subsection*{Acknowledgements}  I am grateful to Alexei Borodin for many helpful conversations throughout the project and detailed feedback on several drafts, Theo Assiotis and Alexander Bufetov for comments and suggestions, Grigori Olshanski for discussions on branching graphs, Vadim Gorin for feedback and the suggestion to recover results of \cite{assiotis2020infinite}, Nathan Kaplan, Hoi Nguyen, and Melanie Matchett Wood for discussions on cokernels of matrix products, and the anonymous referee for helpful comments. This material is based on work partially supported by an NSF Graduate Research Fellowship under grant \#$1745302$, and by the NSF FRG grant DMS-1664619.

\section{Hall-Littlewood polynomials} \label{sec:prelim}

In this section we give basic definitions of symmetric functions and Hall-Littlewood polynomials. For a more detailed introduction to symmetric functions see \cite{mac}, and for Macdonald processes see \cite{borodin2014macdonald}.

\subsection{Partitions, symmetric functions, and Hall-Littlewood polynomials.}

We denote by $\cP$ the set of all integer partitions $(\lambda_1,\lambda_2,\ldots)$, i.e. sequences of nonnegative integers $\lambda_1 \geq \lambda_2 \geq \cdots$ which are eventually $0$. We call the integers $\lambda_i$ the \emph{parts} of $\lambda$, set $\lambda_i' = \#\{j: \lambda_j \geq i\}$, and write $m_i(\lambda) = \#\{j: \lambda_j = i\} = \lambda_i'-\lambda_{i+1}'$. We write $\len(\lambda)$ for the number of nonzero parts, and denote the set of partitions of length $\leq n$ by $\cP_n$. We write $\mu \prec \lambda$ or $\lambda \succ \mu$ if $\lambda_1 \geq \mu_1 \geq \lambda_2 \geq \mu_2 \geq \cdots$, and refer to this condition as \emph{interlacing}. 

We denote by $\Lambda_n$ the ring $\C[x_1,\ldots,x_n]^{S_n}$ of symmetric polynomials in $n$ variables $x_1,\ldots,x_n$. It is a very classical fact that the power sum symmetric polynomials $p_k(x_1,\ldots,x_n) = \sum_{i=1}^n x_i^k, k =1,\ldots,n$, are algebraically independent and algebraically generate $\Lambda_n$. For a symmetric polynomial $f$, we will often write $f(\bx)$ for $f(x_1,\ldots,x_n)$ when the number of variables is clear from context. We will also use the shorthand $\bx^\lambda := x_1^{\lambda_1} x_2^{\lambda_2} \cdots x_n^{\lambda_n}$ for $\lambda \in \cP_n$. 

One has a chain of maps
\[
\cdots \to \Lambda_{n+1} \to \Lambda_n \to \Lambda_{n-1} \to \cdots \to 0
\]
where the map $\Lambda_{n+1} \to \Lambda_n$ is given by setting $x_{n+1}$ to $0$. In fact, writing $\Lambda_n^{(d)}$ for symmetric polynomials in $n$ variables of total degree $d$, one has 
\[
\cdots \to \Lambda_{n+1}^{(d)} \to \Lambda_n^{(d)} \to \Lambda_{n-1}^{(d)} \to \cdots \to 0
\]
with the same maps. The inverse limit $\L^{(d)}$ of these systems may be viewed as symmetric polynomials of degree $d$ in infinitely many variables. From the ring structure on each $\Lambda_n$ one gets a natural ring structure on $\Lambda := \bigoplus_{d \geq 0} \L^{(d)}$, and we call this the \emph{ring of symmetric functions}. 
An equivalent definition is $\Lambda := \C[p_1,p_2,\ldots]$ where $p_i$ are indeterminates; under the natural map $\Lambda \to \Lambda_n$ one has $p_i \mapsto p_i(x_1,\ldots,x_n)$. 

Each ring $\Lambda_n$ has a natural basis $\{p_\lambda: \lambda_1 \leq n\}$ where
\begin{equation*}
    p_\lambda := \prod_{i \geq 1} p_{\lambda_i}.
\end{equation*}
Another natural basis, with the same index set, is given by the \emph{Hall-Littlewood polynomials}. Recall the $q$-Pochhammer symbol $(a;q)_n := \prod_{i=0}^{n-1} (1-aq^i)$, and define
\begin{equation*}
    v_\lambda(t) = \prod_{i \in \Z} \frac{(t;t)_{m_i(\lambda)}}{(1-t)^{m_i(\lambda)}}.
\end{equation*}

\begin{defi}\label{def:HL}
The Hall-Littlewood polynomial indexed by $\lambda \in \cP_n$ is
\begin{equation}\label{eq:hlP_formula}
    P_\lambda(\bx;t) = \frac{1}{v_\lambda(t)} \sum_{\sigma \in S_n} \sigma\left(\bx^\lambda \prod_{1 \leq i < j \leq n} \frac{x_i-tx_j}{x_i-x_j}\right)
\end{equation}
where $\sigma$ acts by permuting the variables. We often drop the `$;t$' when clear from context.
\end{defi}

It follows from the definition that 
\begin{equation}\label{eq:P_stable}
    P_\lambda(x_1,\ldots,x_n,0) = P_\lambda(x_1,\ldots,x_n),
\end{equation}
hence for each $\lambda \in \cP$ there is a \emph{Hall-Littlewood symmetric function} $P_\lambda \in \Lambda$.

In another direction, it is desirable to extend these definitions from symmetric polynomials indexed by partitions to symmetric Laurent polynomials indexed by \emph{integer signatures} with possibly negative parts. 
The set of \emph{integer signatures of length $n$} is denoted 
\[
\Sig_n := \{(\lambda_1,\ldots,\lambda_n) \in \Z^n: \lambda_1 \geq \ldots \geq \lambda_n\}.
\]
The integers $\lambda_n$ are called \emph{parts}, as in the case of partitions. We often identify $\lambda \in \cP_n$ with its image in $\Sig_n$ by simply taking the first $n$ parts and forgetting the zeroes which come after. $\Sig_n^{>0} \subset \Sig_n$ is the set of signatures with all parts positive, similarly for $\Sig_n^{\geq 0}$. We set $|\lambda| := \sum_{i=1}^n \lambda_i$ and $m_k(\lambda) = |\{i: \lambda_i = k\}| = \lambda_k' - \lambda_{k+1}'$ as with partitions. For $\lambda \in \Sig_n$ and $\mu \in \Sig_{n-1}$, write $\mu \prec_P \lambda$ if $\lambda_i \geq \mu_i$ and $\mu_i \geq \lambda_{i+1}$ for $1 \leq i \leq n-1$. For $\nu \in \Sig_n$ write $\nu \prec_Q \lambda$ if $\lambda_i \geq \nu_i$ for $1 \leq i \leq n$ and $\nu_i \geq \lambda_{i+1}$ for $1 \leq i \leq n-1$. We write $c[k]$ for the signature $(c,\ldots,c)$ of length $k$, and $()$ for the unique signature of length $0$. We often abuse notation by writing $(\lambda,\mu)$ to refer to the tuple $(\lambda_1,\ldots,\lambda_n,\mu_1,\ldots,\mu_m)$ when $\lambda \in \Sig_n, \mu \in \Sig_m$.

\Cref{def:HL} extends from $\cP_n$ to $\Sig_n$ with no other changes, and we will use the same notation $P_\lambda$ regardless of whether $\lambda$ is a signature or a partition. It is also clear that 
\[
P_{(\lambda_1+1,\ldots,\lambda_n+1)}(\bx;t) = x_1 \cdots x_n P_\lambda(\bx;t).
\]

\begin{defi}
For $\lambda \in \Sig_n$, we define the dual Hall-Littlewood polynomial by
\[
Q_\lambda(\bx;t) = \prod_{i \in \Z} (t;t)_{m_i(\lambda)} P_\lambda(\bx;t).
\]
\end{defi}

We note that in the case where $\lambda \in \cP_n$ has some parts equal to $0$, this normalization is \emph{not} the same as the standard one in e.g. \cite{mac}, though the two agree when $\lambda$ has all parts positive. We use this nonstandard definition because parts equal to $0$ play no special role with integer signatures, though they do in the usual setup with partitions. We will see shortly that the classical results such as branching rules and the Cauchy identity may be stated naturally in this setting.

Because the $P_\lambda$ form a basis for the vector space of symmetric Laurent polynomials in $n$ variables, there exist symmetric Laurent polynomials $P_{\lambda/\mu}(x_1,\ldots,x_{n-k};t) \in \Lambda_{n-k}[(x_1 \cdots x_{n-k})^{-1}]$ indexed by $\lambda \in \Sig_n, \mu \in \Sig_k$ which are defined by
\begin{equation}\label{eq:def_skewP}
    P_\lambda(x_1,\ldots,x_n;t) = \sum_{\mu \in \Sig_k} P_{\lambda/\mu}(x_{k+1},\ldots,x_n;t) P_\mu(x_1,\ldots,x_k;t).
\end{equation}
We define the skew $Q$ functions in a slightly nonstandard way where the lengths of both signatures are the same, in contrast to the skew $P$ functions; this is inspired by the higher spin Hall-Littlewood polynomials introduced in \cite{borodin2017family}. 

\begin{defi}
For $\lambda,\nu \in \Sig_n^{>0}$ and $k \geq 1$ arbitrary, define $Q_{\nu/\lambda}(x_1,\ldots,x_k;t) \in \lambda_k$ by
\begin{equation}\label{eq:def_skewQ}
    Q_{(\nu,0[k])}(x_1,\ldots,x_{n+k};t) = \sum_{\lambda \in \Sig_n^{> 0}} Q_{\nu/\lambda}(x_{n+1},\ldots,x_{n+k};t) Q_\lambda(x_1,\ldots,x_k;t).
\end{equation}
\end{defi}

In particular, $Q_{\lambda/(0[n])}(x_1,\ldots,x_{n+k};t)$ agrees with $Q_{(\lambda,0[k])}(x_1,\ldots,x_{n+k};t)$ as defined earlier, and we will use both interchangeably. Recall the two interlacing relations on signatures $\succ_P, \succ_Q$ defined above.

\begin{defi}\label{def:psi_varphi_coefs}
For $\mu \in \Sig_{n+1}, \lambda,\nu \in \Sig_n$ with $\mu \succ_P \lambda, \nu \succ_Q \lambda$, let
\begin{equation*}\label{eq:pbranch}
    \psi_{\mu/\lambda} :=  \prod_{\substack{i \in \Z \\ m_i(\lambda) = m_i(\mu)+1}} (1-t^{m_i(\lambda)}) 
\end{equation*}
and
\begin{equation*}\label{eq:qbranch}
    \varphi_{\nu/\lambda} :=  \prod_{\substack{i \in \Z \\  m_i(\nu) = m_i(\lambda)+1}} (1-t^{m_i(\nu)}) 
\end{equation*}
\end{defi}

The following branching rule is standard, but in this specific formulation with signatures follows from \cite[Lemma 2.1 and Proposition 2.8]{van2020limits}.

\begin{lem} \label{thm:branching_formulas}
For $\lambda,\nu \in \Sig_{n}^{>0}, \mu \in \Sig_{n-k}^{>0}$, we have
\begin{equation}\label{eq:skewP_branch_formula}
    P_{\lambda/\mu}(x_1,\ldots,x_k) = \sum_{\mu = \lambda^{(1)} \prec_P \lambda^{(2)} \prec_P \cdots \prec_P \lambda^{(k)}= \lambda} \prod_{i=1}^{k-1} x_i^{|\lambda^{(i+1)}|-|\lambda^{(i)}|}\psi_{\lambda^{(i+1)}/\lambda^{(i)}}
\end{equation}
and
\begin{equation}\label{eq:skewQ_branch_formula}
    Q_{\lambda/\nu}(x_1,\ldots,x_k) = \sum_{\nu = \lambda^{(1)} \prec_Q \lambda^{(2)} \prec_Q \cdots \prec_Q \lambda^{(k)}=\lambda} \prod_{i=1}^{k-1} x_i^{|\lambda^{(i+1)}|-|\lambda^{(i)}|}\varphi_{\lambda^{(i+1)}/\lambda^{(i)}}.
\end{equation}
\end{lem}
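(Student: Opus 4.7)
The plan is to reduce both branching formulas to their one-variable ($k=1$) cases and then iterate using the coproduct-like property of skew Hall-Littlewood polynomials inherent in the definitions \eqref{eq:def_skewP} and \eqref{eq:def_skewQ}. Splitting the variables in \eqref{eq:def_skewP} as $(x_1,\ldots,x_{k-1})$ followed by $x_k$ yields the recursion
\begin{equation*}
P_{\lambda/\mu}(x_1,\ldots,x_k;t) = \sum_{\nu} P_{\lambda/\nu}(x_k;t)\, P_{\nu/\mu}(x_1,\ldots,x_{k-1};t),
\end{equation*}
so a straightforward induction on $k$ reduces the full claim to the one-variable statement, with each induction step appending one signature to the interlacing chain and contributing one factor of $x_i^{|\lambda^{(i+1)}|-|\lambda^{(i)}|}\psi_{\lambda^{(i+1)}/\lambda^{(i)}}$.

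The one-variable case amounts to showing that $P_{\lambda/\nu}(x;t) = x^{|\lambda|-|\nu|}\psi_{\lambda/\nu}$ when $\nu \prec_P \lambda$ and vanishes otherwise. One natural route is to use \eqref{eq:hlP_formula} to write $P_\lambda(x_1,\ldots,x_{n-1},x;t)$ as a symmetrized sum, then extract the coefficient of $P_\nu(x_1,\ldots,x_{n-1};t)$ by collecting terms by the power of $x$; the interlacing constraint $\nu \prec_P \lambda$ emerges from the vanishing of non-interlacing coefficients, and the $\psi$-coefficient records precisely the arithmetic of repeated parts described in \Cref{def:psi_varphi_coefs}. Alternatively one may invoke this one-variable identity as the standard Hall-Littlewood Pieri rule from \cite[Chapter III]{mac}, which is equivalent.

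Passing from partitions to integer signatures is handled by translation invariance: the identity $P_{(\lambda_1+c,\ldots,\lambda_n+c)}(\bx;t) = (x_1\cdots x_n)^c P_\lambda(\bx;t)$ shifts both sides of the branching rule by a common factor, while both the relation $\prec_P$ and the coefficient $\psi_{\lambda/\mu}$ depend only on the componentwise differences and the multiplicities of parts, which are translation-invariant. The $Q$-version proceeds in parallel after one unpacks \eqref{eq:def_skewQ}, since one needs only the one-variable formula $Q_{\nu/\lambda}(x;t) = x^{|\nu|-|\lambda|}\varphi_{\nu/\lambda}$ for $\lambda \prec_Q \nu$, after which the same two-step iteration goes through. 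The main technical obstacle I anticipate is the bookkeeping around the paper's nonstandard $Q$-normalization, which includes a $(t;t)_{m_0(\lambda)}$ factor even when $\lambda$ has zero parts. Because the skew $Q$ is defined via zero-padding to a common length, one must verify that the extra factors from this convention cancel cleanly into the $\varphi_{\nu/\lambda}$ of \Cref{def:psi_varphi_coefs} once the signatures are restricted to $\Sig_n^{>0}$; with that check in place the rest is formal.
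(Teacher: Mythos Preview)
Your approach is correct and is exactly the standard one. Note, however, that the paper does not actually give a proof of this lemma: it simply remarks that the branching rule is standard and defers to \cite[Lemma 2.1 and Proposition 2.8]{van2020limits} for the signature formulation. Your sketch (reduce to the single-variable Pieri formula from \cite[Chapter III]{mac}, iterate via the defining coproduct property, and pass to general signatures by translation invariance) is precisely the content behind those citations, so there is nothing to compare beyond that.
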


The formulas from \Cref{thm:branching_formulas} may be used to define skew functions for general signatures. 

\begin{defi}\label{def:skew_fns}
For $\lambda,\nu \in \Sig_{n}, \mu \in \Sig_{n-k}$, define $P_{\lambda/\mu}(x_1,\ldots,x_k)$ and $Q_{\lambda/\nu}(x_1,\ldots,x_k)$ by the formulas \eqref{eq:skewP_branch_formula} and \eqref{eq:skewQ_branch_formula} respectively.
\end{defi}

It follows from \eqref{eq:P_stable} and \eqref{eq:def_skewP} that for $\lambda \in \Sig_n^{\geq 0},\mu \in \Sig_k^{\geq 0}$,
\begin{equation}\label{eq:skewP_add_zeros}
    P_{\lambda/\mu}(x_1,\ldots,x_{n-k}) = P_{(\lambda,0)/(\mu,0)}(x_1,\ldots,x_{n-k}) = P_{(\lambda,0)/\mu}(x_1,\ldots,x_{n-k},0).
\end{equation}
Therefore there exists a symmetric function $P_{(\lambda,0,\ldots)/(\mu,0,\ldots)} \in \Lambda$ associated to the pair of partitions $(\lambda,0,\ldots),(\mu,0,\ldots) \in \Y$, which maps to 
\[
 P_{\lambda/\mu}(x_1,\ldots,x_{n-k}) = P_{(\lambda,0)/(\mu,0)}(x_1,\ldots,x_{n-k})
\]
under the map $\Lambda \to \Lambda_{n-k}$. With $Q$ the situation is slightly more subtle: given either $\lambda,\nu \in \Sig_n$ or $\lambda,\nu \in \Y$ there exists an element $Q_{\nu/\lambda} \in \Lambda$. If additionally $\lambda,\nu \in \Sig_n^{\geq 0}$ then in fact $Q_{\nu/\lambda} = Q_{(\nu,0)/(\lambda,0)}$. These properties can all be checked from the above. 

\begin{rmk}\label{rmk:P_trans_inv}
It follows from \Cref{def:HL} and \Cref{def:skew_fns} that for any $D \in \Z$,
\begin{align*}
    P_{\lambda+D[n]}(x_1,\ldots,x_n) &= (x_1 \cdots x_n)^D P_{\lambda}(x_1, \ldots, x_n) \\
    P_{(\mu+D[m])/(\lambda+D[n])}(x_1,\ldots,x_{m-n}) &= (x_1 \cdots x_{m-n})^DP_{\mu/\lambda}(x_1,\ldots,x_{m-n}) \\
\end{align*}
\end{rmk}

We note that $P_{\lambda/\mu}(x_1,\ldots,x_k)$ is in general a Laurent polynomial, while $Q_{\lambda/\nu}(x_1,\ldots,x_k)$ is always a polynomial.

Hall-Littlewood polynomials satisfy the \emph{skew Cauchy identity}, upon which most probabilistic constructions rely. A formulation in terms of signatures is given for the more general Macdonald polynomials in \cite[Lemma 2.3]{van2020limits}, and the below statement follows immediately by specializing that one.

\begin{prop}\label{thm:finite_cauchy}
Let $\nu \in \Sig_k, \mu \in \Sig_{n+k}$. Then
\begin{multline}\label{eq:finite_cauchy}
    \sum_{\kappa \in \Sig_{n+k}} P_{\kappa/\nu}(x_1,\ldots,x_n;t)Q_{\kappa/\mu}(y_1,\ldots,y_m;t) \\
    = \prod_{\substack{1 \leq i \leq n \\ 1 \leq j \leq m}} \frac{1-tx_iy_j}{1-x_iy_j} \sum_{\lambda \in \Sig_k} Q_{\nu/\lambda}(y_1,\ldots,y_m;t) P_{\mu/\lambda}(x_1,\ldots,x_n;t).
\end{multline}
\end{prop}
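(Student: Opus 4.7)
The plan is to reduce this signature-valued skew Cauchy identity to its classical partition counterpart, which in turn follows by specializing $q=0$ in the Macdonald skew Cauchy identity \cite[Lemma 2.3]{van2020limits} (or from \cite[Ch.~III.(5.4)]{mac}). The reduction uses the translation-covariance of skew Hall-Littlewood functions recorded in \Cref{rmk:P_trans_inv}.

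Concretely, I would choose $D \in \Z$ large enough that $\tilde\nu := \nu + D[k]$ and $\tilde\mu := \mu + D[n+k]$ both have all parts nonnegative, i.e.\ are genuine partitions. Applying the bijection $\kappa \leftrightarrow \kappa + D[n+k]$ on the index set $\Sig_{n+k}$ of the LHS sum, and $\lambda \leftrightarrow \lambda + D[k]$ on the index set $\Sig_k$ of the RHS sum, one checks that on both sides each skew $P$ picks up a factor $(x_1 \cdots x_n)^D$ (from \Cref{rmk:P_trans_inv}), while each skew $Q$ is unchanged: in the skew $Q$ branching rule \eqref{eq:skewQ_branch_formula} the two indices have equal length, so a diagonal shift of the chain preserves every multiplicity $m_i$ (hence every $\varphi$ coefficient) and cancels in each size difference $|\lambda^{(j+1)}|-|\lambda^{(j)}|$. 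Dividing through by the common factor $(x_1\cdots x_n)^D$, the identity for $(\nu, \mu)$ is therefore equivalent to the identity for the partitions $(\tilde\nu, \tilde\mu)$.

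At this point one is reduced to the partition case. The summations match: $P_{\tilde\kappa/\tilde\nu}(x_1,\ldots,x_n)$ vanishes unless $\tilde\kappa$ has length at most $n + \len(\tilde\nu) \leq n+k$, so the effective summation is over $\tilde\kappa \in \Sig_{n+k}^{\geq 0}$, which under zero-padding is identified with partitions of length $\leq n+k$; similarly on the RHS. The partition identity is then immediate from the cited Macdonald specialization.

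The main bookkeeping challenge is the nonstandard normalization of $Q_\lambda$ used in this paper, which retains a factor $(t;t)_{m_0(\lambda)}$ coming from zero parts that is dropped in the conventional definition. One must verify that the zero-padding identifications used above are compatible with this normalization on both sides of the identity; this is routine using \eqref{eq:skewP_add_zeros} together with the analogous stability property for $Q_{\nu/\lambda}$ noted just after \Cref{def:skew_fns}.
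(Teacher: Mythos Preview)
Your approach is correct and leads to the same conclusion as the paper, but it does more work than necessary. The paper's proof is a single sentence: the identity follows immediately by specializing $q=0$ in \cite[Lemma 2.3]{van2020limits}, which is \emph{already} stated in terms of integer signatures (not just partitions) for the full Macdonald polynomials. So the translation-to-partitions reduction you carry out, while valid, is superfluous once you invoke that reference.

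That said, your reduction is a reasonable self-contained argument if one prefers to rely only on the classical partition identity in \cite{mac}. The translation covariance you use for $P$ is exactly \Cref{rmk:P_trans_inv}, and your observation that $Q_{\nu/\lambda}$ is translation-invariant (both indices having the same length) is correct from the branching rule. The bookkeeping about the nonstandard $Q$-normalization and zero-padding is indeed routine, as you say. So nothing is wrong; you have simply re-derived part of what the cited lemma already packages.
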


For later convenience we set
\begin{equation}\label{eq:def_cauchy_kernel}
     \Pi(\bx;\by) := \prod_{\substack{1 \leq i \leq n \\ 1 \leq j \leq m}} \frac{1-tx_iy_j}{1-x_iy_j} = \exp\left(\sum_{\ell = 1}^\infty \frac{1-t^\ell}{\ell}p_\ell(\bx)p_\ell(\by)\right)
\end{equation}
(The second equality in \eqref{eq:def_cauchy_kernel} is not immediate but is shown in \cite{mac}).

There is also a more standard form of the Cauchy identity with integer partitions rather than signatures, see \cite{mac}: For $\mu,\nu \in \Y$, 
\begin{multline}\label{eq:infinite_cauchy}
    \sum_{\kappa \in \Y} P_{\kappa/\nu}(x_1,\ldots,x_n;t)Q_{\kappa/\mu}(y_1,\ldots,y_m;t) \\
    = \prod_{\substack{1 \leq i \leq n \\ 1 \leq j \leq m}} \frac{1-tx_iy_j}{1-x_iy_j} \sum_{\lambda \in \Y} Q_{\nu/\lambda}(y_1,\ldots,y_m;t) P_{\mu/\lambda}(x_1,\ldots,x_n;t).
\end{multline}

Hall-Littlewood polynomials/functions may be used to define Markovian dynamics on $\Sig_n$. Given finite or infinite sequences $\ba, \mathbf{b}$ of nonnegative real $a_i,b_j$ with finite sums and $a_ib_j < 1$, the expressions
\begin{equation}\label{eq:general_cauchy_dynamics}
    \Pr(\lambda \to \nu) := Q_{\nu/\lambda}(\ba)\frac{P_\nu(\mathbf{b})}{P_\lambda(\mathbf{b}) \Pi(\mathbf{a}; \mathbf{b})}
\end{equation}
define transition probabilities by \Cref{thm:finite_cauchy}. The joint distribution of such dynamics, run for $k$ steps with possibly distinct specializations $\ba^{(1)},\ldots,\ba^{(k)}$ and started at $0[n]$, is a so-called \emph{ascending Hall-Littlewood process}. It is a measure on $\Sig_n^k$ given by 
\begin{equation}\label{eq:general_hl_proc}
    \Pr(\lambda^{(1)},\ldots,\lambda^{(k)}) = \frac{P_{\lambda^{(k)}}(\mathbf{b}) \prod_{i=1}^k Q_{\lambda^{(i)}/\lambda^{(i-1)}}(\ba^{(i)}) }{\Pi(\mathbf{b}; \ba^{(1)},\ldots,\ba^{(k)})}
\end{equation}
where we take $\lambda^{(0)} = (0[n])$.

Finally, there are simple explicit formulas for the Hall-Littlewood polynomials when a geometric progression $u,ut,\ldots,ut^{n-1}$ is substituted for $x_1,\ldots,x_n$--this is often referred as a \emph{principal specialization}. For $\lambda \in \Sig_n$ let
\begin{equation}\label{eq:n(lambda)}
    n(\lambda) := \sum_{i=1}^n (i-1)\lambda_i,
\end{equation}
and note that if additionally $\lambda \in \Sig_n^{\geq 0}$ then 
\[
n(\lambda) = \sum_{x \geq 1} \binom{\lambda_x'}{2}.
\]
The following formula is standard. It may be easily derived from \eqref{eq:hlP_formula} by noting that the summand is zero unless the permutation is the identity, and evaluating this summand.

\begin{prop}[Principal specialization formula]\label{thm:hl_principal_formulas}
For $\lambda \in \Sig_n$,
\begin{align*}
    P_\lambda(u,ut,\ldots,ut^{n-1};t) &= u^{|\lambda|} t^{n(\lambda)} \frac{(t;t)_n}{\prod_{i \in \Z} (t;t)_{m_i(\lambda)}}.
\end{align*}
\end{prop}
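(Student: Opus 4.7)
The plan is to substitute the geometric progression $x_i = ut^{i-1}$ directly into the explicit formula \eqref{eq:hlP_formula} and show that only the identity permutation contributes a nonzero summand, after which the identity term can be evaluated in closed form.

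First I would examine, for each $\sigma \in S_n$, the factor
\[
\sigma\!\left(\prod_{1 \leq i < j \leq n} \frac{x_i - tx_j}{x_i - x_j}\right) \Bigg|_{x_k = ut^{k-1}} = \prod_{1 \leq i < j \leq n} \frac{t^{\sigma(i)-1} - t^{\sigma(j)}}{t^{\sigma(i)-1} - t^{\sigma(j)-1}}.
\]
A numerator factor vanishes precisely when $\sigma(j) = \sigma(i) - 1$ for some $i < j$. The combinatorial observation I need is: if no such pair $(i,j)$ exists, then for every value $k \in \{2,\ldots,n\}$ the position of $k-1$ under $\sigma^{-1}$ precedes the position of $k$, forcing $\sigma^{-1}$, and hence $\sigma$, to be the identity. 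So all non-identity summands vanish. (One should also verify that no denominator vanishes simultaneously: for $i < j$, $\sigma(i) \neq \sigma(j)$ already makes the denominator factor nonzero.)

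For $\sigma = \mathrm{id}$, the monomial $\bx^\lambda$ evaluates to $u^{|\lambda|} t^{\sum_i (i-1)\lambda_i} = u^{|\lambda|} t^{n(\lambda)}$, and the remaining product becomes
\[
\prod_{1 \leq i < j \leq n} \frac{1 - t^{j-i+1}}{1 - t^{j-i}},
\]
which I would evaluate by grouping terms according to $d = j-i$ (so there are $n-d$ copies for each $d \in \{1,\ldots,n-1\}$) and then telescoping; this collapses to $(t;t)_n / (1-t)^n$. Combining with $v_\lambda(t) = \prod_i (t;t)_{m_i(\lambda)} / (1-t)^{m_i(\lambda)}$ and using $\sum_i m_i(\lambda) = n$, the $(1-t)$ powers cancel and I obtain the stated formula.

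There is no real obstacle: the only slightly delicate step is the combinatorial argument that the identity is the unique surviving permutation, which is a short check. The telescoping of the $q$-Pochhammer product and the cancellation of $(1-t)^n$ against the denominator of $v_\lambda(t)$ are mechanical.
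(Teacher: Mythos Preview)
Your proposal is correct and follows precisely the approach the paper indicates: substitute the principal specialization into \eqref{eq:hlP_formula}, observe that every non-identity permutation has a vanishing numerator factor, and evaluate the surviving identity summand. The paper merely sketches this (``noting that the summand is zero unless the permutation is the identity, and evaluating this summand''), and your write-up fills in exactly those details.
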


The proof sketched above offers no clear extension to the case of skew polynomials, but in \Cref{sec:skew_formulas} we will derive such formulas using recent results of \cite{borodin2018higher_original_paper}.

\section{Principally specialized skew Hall-Littlewood polynomials} \label{sec:skew_formulas}

In this section we prove \eqref{eq:inf_skew_princ_intro} from the Introduction and its analogue for $Q_{\nu/\lambda}$ in \Cref{thm:skew_formula}, as well as extensions when the geometric progression is finite and the formulas are less simple in \Cref{thm:finite_skew_fn_computation}. Let us introduce a bare minimum of background on higher spin Hall-Littlewood polynomials $F_{\mu/\lambda}, G_{\nu/\lambda}$, which generalize the usual Hall-Littlewood polynomials $P,Q$ by the addition of an extra parameter $s$. We omit their definition, which may be found in \cite{borodin2017family,borodin2017vertex_lecture_notes}, as we will only care about the case $s=0$ when they reduce to slightly renormalized Hall-Littlewood polynomials. When $s=0$, for $\lambda,\nu \in \Sig_{n}^{\geq 0}, \mu \in \Sig_{n+k}^{\geq 0}$ one has 
\begin{equation}\label{eq:FtoP}
    F_{\mu/\lambda}(x_1,\ldots,x_k)\Big\vert_{s=0} = \prod_{i \geq 0} \frac{(t;t)_{m_i(\mu)}}{(t;t)_{m_i(\lambda)}} P_{\mu/\lambda}(x_1,\ldots,x_k)
\end{equation}
and 
\begin{equation}
    G_{\nu/\lambda}(x_1,\ldots,x_k)\Big\vert_{s=0} = Q_{\nu/\lambda}(x_1,\ldots,x_k)
\end{equation}
by \cite[\S 8.1]{borodin2017family}. Formulas for principally specialized skew $F$ and $G$ functions were shown in \cite{borodin2017family}, though we will state the version given later in \cite{borodin2017vertex_lecture_notes}. We apologize to the reader for giving a formula for an object which we have not actually defined, but will immediately specialize to the Hall-Littlewood case, so we hope no confusion arises. We need the following notation.

\begin{defi}\label{def:qhyp}
The normalized terminating $q$-hypergeometric function is
\begin{equation}\label{eq:qhyp_def}
    _{r+1}\bar{\phi}_r \left(\begin{matrix} 
t^{-n};a_1,\ldots,a_r \\ b_1,\ldots,b_r \end{matrix} 
; t,z \right) := \sum_{k=0}^n z^k \frac{(t^{-n};t)_k}{(t;t)_k} \prod_{i=1}^r(a_i;t)_k (b_it^k;t)_{n-k}
\end{equation}
for $n \in \Z_{\geq 0}$ and $|z|,|t|<1$. 
\end{defi}

\begin{prop}[{\cite[Proposition 5.5.1]{borodin2017vertex_lecture_notes}}]\label{thm:formula_from_bf}
Let $J \in \Z_{\geq 1},\lambda \in \Sig_n^{\geq 0}, \mu \in \Sig_{n+J}^{\geq 0}$. Then
\begin{equation}\label{eq:xproduct_general_s}
    F_{\mu/\lambda}(u,tu,\ldots,t^{J-1}u) = \prod_{x \in \Z_{\geq 0}} w_u^{(J)}(i_1(x),j_1(x);i_2(x),j_2(x)),
\end{equation}
where the product is over the unique collection of $n+J$ up-right paths on the semi-infinite horizontal strip of height $1$ with paths entering from the bottom at positions $\lambda_i, 1 \leq i \leq n$, $J$ paths entering from the left, and paths exiting from the top at positions $\mu_i, 1 \leq i \leq n+J$, see \Cref{fig:path_example}. Here $i_1(x),j_1(x),i_2(x),j_2(x)$ are the number of paths on the south, west, north and east edge of the vertex at position $x$ as in \Cref{fig:ij_notation}, and the weights in the product are given by
\begin{multline}\label{eq:weights_with_s}
     w_u^{(J)}(i_1,j_1;i_2,j_2) := \delta_{i_1+j_1,i_2+j_2}\frac{(-1)^{i_1+j_2}t^{\frac{1}{2}i_1(i_1+2j_1-1)}s^{j_2-i_1}u^{i_1}(t;t)_{j_1}(us^{-1};t)_{j_1-i_2}}{(t;t)_{i_1}(t;t)_{j_2}(us;t)_{i_1+j_1}} \\
     \times \qhyp
     \left(\begin{matrix} 
t^{-i_1};t^{-i_2},t^Jsu,tsu^{-1} \\ 
s^2,t^{1+j_1-i_2},t^{1+J-i_1-j_1} \end{matrix} 
; t,t \right).
\end{multline}
Similarly, for $\lambda,\nu \in \Sig_n^{\geq 0}$, $G_{\nu/\lambda}(u,tu,\ldots,t^{J-1}u)$ is given by the product of the same weights over the unique collection of $n$ up-right paths on the same strip entering from the bottom at positions $\lambda_i, 1 \leq i \leq n$ and exiting from the top at positions $\nu_i, 1 \leq i \leq n$. 
\end{prop}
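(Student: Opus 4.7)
The plan is to prove this by following the fusion procedure for integrable vertex models, which is the standard mechanism for deriving principal specialization formulas of this type. By construction, $F_{\mu/\lambda}(x_1,\ldots,x_J)$ is the partition function of the stochastic higher-spin six-vertex model on a horizontal strip of height $J$ with row spectral parameters $x_1,\ldots,x_J$, one path entering from the west on each row, and bottom/top boundaries specified by $\lambda,\mu$; analogously $G_{\nu/\lambda}$ is the same partition function with no west-edge inputs. The principal specialization substitutes $x_i = ut^{i-1}$, a geometric progression with ratio $t$, which is precisely the setting to which fusion applies.

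Fusion asserts that under this geometric specialization, summing the product of single-row vertex weights over all internal states of the stacked $J$ rows with prescribed outer boundary data yields a single effective weight $w_u^{(J)}(i_1,j_1;i_2,j_2)$ depending only on the boundary path counts. Carrying out this sum is the central technical step. The cleanest route is induction on $J$: at each stage one reduces the intermediate sum via the $q$-Chu-Vandermonde identity (equivalently, the $q$-Saalsch\"utz summation), which telescopes the Pochhammer factors and produces the terminating $\qhyp$ appearing in \eqref{eq:weights_with_s}. Alternatively, one may read off the fused weights directly from the $R$-matrix of $U_t(\widehat{\mathfrak{sl}_2})$, where the Yang-Baxter equation guarantees a priori that a closed form in terminating basic hypergeometric series must exist. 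With the fused weight in hand, the product formula \eqref{eq:xproduct_general_s} follows from path conservation: in the height-$1$ fused picture with boundary data $(\lambda,\mu,J)$, the number of horizontal paths on the west edge of the vertex at column $x$ equals $J$ minus the net vertical flux through the columns strictly to the left of $x$, and this pins down a unique admissible configuration. Hence the partition function collapses to the displayed single product; the argument for $G_{\nu/\lambda}$ is identical with $J=0$ on the horizontal boundary.

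The main obstacle is the explicit evaluation of the fused weight in the stated form. Although the existence of a closed expression is guaranteed by the $R$-matrix formalism, producing a single ${}_4\bar{\phi}_3$ with the correct prefactor --- rather than an unilluminating nested sum --- requires delicate bookkeeping of the $s$- and $u$-dependent factors, of signs, and of the half-integer powers of $t$, and is where essentially all of the work sits. Since this is exactly the computation performed in \cite[Proposition 5.5.1]{borodin2017vertex_lecture_notes} (building on \cite{borodin2017family}), in practice the proof reduces to invoking their formula once one verifies that the conventions for $F_{\mu/\lambda}$ and $G_{\nu/\lambda}$ used here agree with those in the cited references.
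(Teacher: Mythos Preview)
Your proposal is essentially correct and aligns with the paper: the paper does not prove this proposition at all but simply quotes it from \cite[Proposition~5.5.1]{borodin2017vertex_lecture_notes}, and your sketch of the fusion mechanism (stacking $J$ rows with spectral parameters in geometric progression, then summing internal states to obtain a single fused vertex weight) is exactly the argument underlying that cited result. Your final sentence, that the proof in practice reduces to invoking the Borodin--Petrov formula after matching conventions, is precisely what the paper does.
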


\bigskip

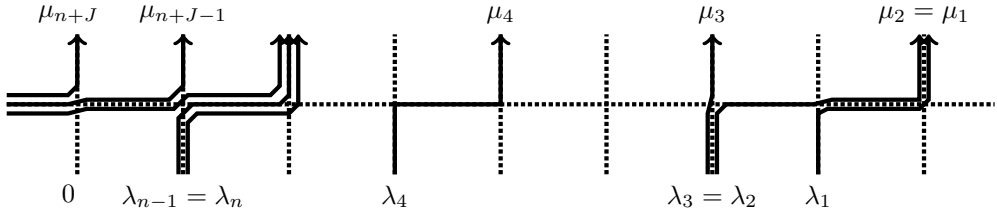
\begin{figure}[htbp]
\centering
	\scalebox{1}{\begin{tikzpicture}
	[scale=1.16,ultra thick]
	\def\d{.105}
	\def\h{1.2} 
	\draw[densely dotted] (-.8,0)--++(8.8+2*\h,0);
	\foreach \ii in {0,1,2,3,4,5,6,7,8}
	{
		\draw[densely dotted] (\h*\ii,-.8)--++(0,1.6);
	}
	\node[below] at (-.1,-.8) {0};
	\node[below] at (\h,-.8) {$\lambda_{n-1}=\lambda_n$};
	\node[below] at (6*\h,-.8) {$\lambda_{3}=\lambda_{2}$};
	\node[below] at (7*\h,-.8) {$\lambda_{1}$};
	\node[below] at (3*\h,-.8) {$\lambda_{4}$};
	\node[above] at (8*\h,.8) {$\mu_{2}=\mu_1$};
	\node[above] at (6*\h,.8) {$\mu_3$};
	\node[above] at (4*\h,.8) {$\mu_4$};
	\node[above] at (0*\h-.1,.8) {$\mu_{n+J}$};
	\node[above] at (1*\h,.8) {$\mu_{n+J-1}$};
	\draw[->] (7*\h,-.8)--++(0,.8-\d)--++(\d,\d/2)--++(\h-\d*3/2,0)--++(\d,\d)--++(0,.8-\d/2);
	\draw[->] (6*\h+\d/2,-.8)--++(0,.8-\d)--++(\d,\d)--++(\h-2*\d,0)--++(2*\d,\d/2)--++(\h-2*\d,0)--++(0,.8-\d/2);
	\draw[->] (6*\h-\d/2,-.8)--++(0,.8-\d)--++(\d/2,2*\d)--++(0,.8-\d);
	\draw[->] (3*\h,-.8)--++(0,.8)--++(\h,0)--++(0,.8);
	\draw[->] (-.8,\d)--++(.8-\d,0)--++(\d,\d)--++(0,.8-2*\d);
	\draw[->] (-.8,0)--++(.8-\d,0)--++(2*\d,\d/2)--++(\h-\d*5/2,0)--++(3/2*\d,3/2*\d)--++(0,.8-2*\d);
	\draw[->] (-.8,-\d)--++(.8-\d,0)--++(2*\d,\d/2)--++(\h-2*\d,0)--++(3/2*\d,3/2*\d)--++(\h-\d/2-\d-\d,0)--++(\d,\d)
	--++(0,.8-2*\d);
	\draw[->] (\h-\d/2,-.8)--++(0,.8-3/2*\d)--++(3/2*\d,3/2*\d)--++(\h-2*\d,0)--++(\d,\d)--++(0,.8-\d);
	\draw[->] (\h+\d/2,-.8)--++(0,.8-\d*2)--++(\d,\d)--++(\h-3/2*\d,0)--++(\d,\d)--++(0,.8);
\end{tikzpicture}}
\bigskip
\caption{
	The unique path collection corresponding to the function 
	$\F_{\mu/\lambda}(u,qu,\ldots,q^{J-1}u)$ with $J=3,n=6,\lambda=(7,6,6,4,1,1),\mu = (8,8,6,4,2,2,2,1,0)$.}
\label{fig:path_example}
\end{figure}

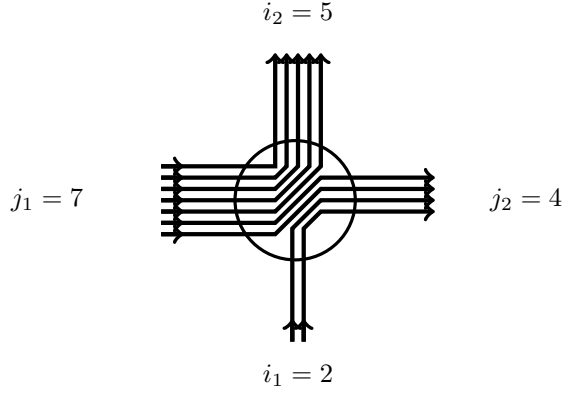
\begin{figure}[htbp]
\bigskip
\begin{center}
\begin{tikzpicture}
		[scale=1.5, ultra thick]
		\def\d{.1}
		\draw[->] (-1,3*\d)--++(1,0)--++(0,1);
		\draw[->] (-1,2*\d)--++(1,0)--++(\d,\d)--++(0,1);
		\draw[->] (-1,\d)--++(1,0)--++(2*\d,2*\d)--++(0,1);
		\draw[->] (-1,0)--++(1,0)--++(3*\d,3*\d)--++(0,1);
		\draw[->] (-1,-\d)--++(1,0)--++(4*\d,4*\d)--++(0,1);
		\draw[->] (-1,-2*\d)--++(1,0)--++(4*\d,4*\d)--++(1,0);
		\draw[->] (-1,-3*\d)--++(1,0)--++(4*\d,4*\d)--++(1,0);
		\draw[->] (1.5*\d,-1-2.5*\d)--++(0,1)--++(2.5*\d,2.5*\d)--++(1,0);
		\draw[->] (2.5*\d,-1-2.5*\d)--++(0,1)--++(1.5*\d,1.5*\d)--++(1,0);
		\node at (.2,-1.55) {$i_1=2$};
		\node at (-2,0) {$j_1=7$};
		\node at (.2,1.65) {$i_2=5$};
		\node at (2.2,0) {$j_2=4$};
		\draw[->] (-1,3*\d)--++(.2,0);1
		\draw[->] (-1,2*\d)--++(.2,0);
		\draw[->] (-1,1*\d)--++(.2,0);
		\draw[->] (-1,0*\d)--++(.2,0);
		\draw[->] (-1,-1*\d)--++(.2,0);
		\draw[->] (-1,-2*\d)--++(.2,0);
		\draw[->] (-1,-3*\d)--++(.2,0);
		\draw[->] (1.5*\d,-1-2.5*\d)--++(0,.2);
		\draw[->] (2.5*\d,-1-2.5*\d)--++(0,.2);
		\draw[very thick] (1.75*\d,0) circle (15pt);
\end{tikzpicture}
\end{center}
\caption{Illustration of the notation for edges, in the example $(i_1,j_1;i_2,j_2)=(2,7;5,4)$.} 
	\label{fig:ij_notation}
\end{figure}

\begin{rmk}
To avoid confusion with \cite{borodin2017family,borodin2017vertex_lecture_notes}, we note that the parameter which we call $t$ for consistency with Hall-Littlewood notation is denoted by $q$ in these references.
\end{rmk}
We now introduce some notation and specialize \Cref{thm:formula_from_bf} to the Hall-Littlewood case $s=0$. 

\begin{defi}\label{def:gen:n(lambda)}
For $\lambda,\nu \in \Sig_n$, we define 
\[
n(\nu/\lambda) := \sum_{\substack{1 \leq i  < j \leq n}} \max(\nu_j - \lambda_i, 0) = \sum_{x \geq \lambda_n} \binom{\nu_{x+1}' - \lambda_{x+1}'}{2}.
\]
We additionally allow the case when $\lambda,\nu \in \Y$; the first formula makes sense with the $\leq n$ removed, while for the second we simply replace the sum over $x \geq \lambda_n$ by $x \geq 0$. 
\end{defi}

Note that $n(\nu/\lambda)$
\begin{enumerate}
    \item is translation-invariant, $n((\nu+D[n])/(\lambda+D[n])) = n(\nu/\lambda)$, and
    \item generalizes the standard definition of $n(\nu)$ in \eqref{eq:n(lambda)}, namely when $\nu \in \Sig_n^{\geq 0}$ then $n(\nu) = n(\nu/(0[n]))$.
\end{enumerate}
One may also view $n(\nu/\lambda)$ as quantifying the failure of $\nu$ and $\lambda$ to interlace; it is $0$ when $\nu,\lambda$ interlace, and increases by $1$ when a part of $\lambda$ is moved past a part of $\nu$.

\begin{prop}\label{thm:finite_skew_fn_computation}
For $J \in \Z_{\geq 1},\lambda \in \Sig_n^{\geq 0}, \mu \in \Sig_{n+J}^{\geq 0}$,
\begin{align}\label{eq:finite_skew_formula}
P_{\mu/\lambda}(u,\ldots,ut^{J-1}) =
     (t;t)_J u^{|\mu|-|\lambda|} \prod_{x \geq 0} \frac{t^{m_x(\lambda)m_x(\mu)+\binom{\mu_{x+1}'-\lambda_{x+1}'}{2}}}{(t;t)_{m_x(\mu)}}\qhypm
     \left(\begin{matrix} 
t^{-m_x(\lambda)};t^{-m_x(\mu)},0 \\ 
t^{1+\mu_{x+1}'-\lambda_x'},t^{1+J-\mu_x'+\lambda_{x+1}'}\end{matrix} 
; t,t \right).
\end{align}
For $\lambda,\nu \in \Sig_n$,
\begin{align}\label{eq:finite_skew_Q}
    Q_{\nu/\lambda}(u,\ldots,ut^{J-1}) = u^{|\nu|-|\lambda|} t^{n(\nu/\lambda)} \prod_{x \in \Z} \frac{t^{m_x(\lambda)m_x(\nu)}}{(t;t)_{m_x(\lambda)}} \qhypm
     \left(\begin{matrix} 
t^{-m_x(\lambda)};t^{-m_x(\nu)},0 \\ 
t^{1+\nu_{x+1}'-\lambda_x'},t^{1+J-\nu_x'+\lambda_{x+1}'}\end{matrix} 
; t,t \right).
\end{align}
\end{prop}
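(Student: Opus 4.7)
The plan is to derive both formulas by specializing \Cref{thm:formula_from_bf} at $s=0$, using the identifications \eqref{eq:FtoP} (and its $G$--$Q$ analogue) to pass from higher spin Hall-Littlewood polynomials back to ordinary Hall-Littlewood polynomials. First I would identify the edge occupancies at each column $x$ in the unique path collection. For $F_{\mu/\lambda}(u,tu,\ldots,t^{J-1}u)$, reading off south/north inputs gives $i_1(x)=m_x(\lambda)$ and $i_2(x)=m_x(\mu)$, while a conservation count comparing paths that have entered (from west and from south below $x$) with those that have exited (north below $x$) yields
\[
j_1(x) = \mu_x' - \lambda_x', \qquad j_2(x) = \mu_{x+1}'-\lambda_{x+1}'.
\]
For $G_{\nu/\lambda}$ no paths enter from the west, so the same computation gives $j_1(x)=\nu_x'-\lambda_x'$ and $j_2(x)=\nu_{x+1}'-\lambda_{x+1}'$.

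Next I would carefully take $s\to 0$ in the weight \eqref{eq:weights_with_s}. At this value $(t^J su;t)_k$, $(tsu^{-1};t)_k$, and $(s^2 t^k;t)_{n-k}$ all tend to $1$, so the $\qhyp$ collapses to the $\qhypm$ appearing in \eqref{eq:finite_skew_formula} and \eqref{eq:finite_skew_Q}, once we substitute $n=i_1$, $a_1=t^{-i_2}$, and rewrite the remaining $b$-parameters in terms of $\lambda',\mu'$. For the prefactor the only subtle point is the cancellation of apparent singularities: $(us^{-1};t)_{j_1-i_2}$ contributes $(-us^{-1})^{j_1-i_2} t^{\binom{j_1-i_2}{2}}$ to leading order, whose $s^{-(j_1-i_2)}$ exactly cancels the explicit $s^{j_2-i_1}$ factor by the conservation law $j_2-i_1=j_1-i_2$. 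The remaining sign works out to $+1$ because $(-1)^{i_1+j_2+j_1-i_2}=(-1)^{2j_2}$, and the $u$-contribution per vertex becomes $u^{j_2(x)}$, whose product over $x$ telescopes to $u^{|\mu|-|\lambda|}$ (respectively $u^{|\nu|-|\lambda|}$).

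To finish the $P$ formula I would multiply by the factor $\prod_x (t;t)_{m_x(\lambda)}/(t;t)_{m_x(\mu)}$ coming from inverting \eqref{eq:FtoP}, then observe that the accumulated $(t;t)_{\mu_x'-\lambda_x'}/(t;t)_{\mu_{x+1}'-\lambda_{x+1}'}$ factors telescope, with top value $(t;t)_{\mu_0'-\lambda_0'}=(t;t)_J$ and bottom value $1$, producing the overall $(t;t)_J$. For $G_{\nu/\lambda}$ the analogous telescoping gives $1$ since $\nu_0'-\lambda_0'=0$. The identification of the $t$-exponent then reduces to the elementary identity
\[
\binom{m_x(\lambda)}{2}+m_x(\lambda)\bigl(\mu_x'-\lambda_x'\bigr)+\binom{\mu_{x+1}'-\lambda_x'}{2} \;=\; m_x(\lambda)\,m_x(\mu)+\binom{\mu_{x+1}'-\lambda_{x+1}'}{2},
\]
which follows termwise from the relation $(\mu_x'-\lambda_x')-(\mu_{x+1}'-\lambda_{x+1}') = m_x(\mu)-m_x(\lambda)$ together with $\binom{a-m}{2}-\binom{a}{2}=\binom{m+1}{2}-am$; analogously with $\nu$ in place of $\mu$ for the $Q$ case, producing $n(\nu/\lambda)$ by the alternative expression in \Cref{def:gen:n(lambda)}. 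The extension of \eqref{eq:finite_skew_Q} from $\lambda,\nu\in\Sig_n^{\geq 0}$ (where \Cref{thm:formula_from_bf} applies) to $\Sig_n$ is immediate from translation invariance: both sides transform the same way under $\lambda\mapsto\lambda+D[n]$, $\nu\mapsto\nu+D[n]$ since $Q_{\nu/\lambda}$, $n(\nu/\lambda)$, the multiplicities, and the indices in the $\qhypm$ are all invariant under simultaneous shift, while $|\nu|-|\lambda|$ is likewise unchanged.

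The main obstacle is the $s\to 0$ analysis: several factors blow up or vanish, and one must verify both that the divergences cancel and that the $\qhyp$ degenerates cleanly to the $\qhypm$ of the claim rather than something spurious. Once this limit is performed vertex by vertex, everything else is bookkeeping --- the $u$-exponent telescopes via conservation, the $(t;t)_J$ arises from a telescoping product of $t$-Pochhammers, and the final $t$-exponent comes from the combinatorial identity above. No new structural input beyond \Cref{thm:formula_from_bf} and \eqref{eq:FtoP} is required.
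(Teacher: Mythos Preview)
Your proposal is correct and follows essentially the same route as the paper: specialize \Cref{thm:formula_from_bf} at $s=0$ via \eqref{eq:FtoP}, identify the edge occupancies $i_1,i_2,j_1,j_2$ with $m_x(\lambda),m_x(\mu),\mu_x'-\lambda_x',\mu_{x+1}'-\lambda_{x+1}'$, cancel the apparent $s$-singularities using conservation $j_2-i_1=j_1-i_2$, telescope the Pochhammer ratios to produce $(t;t)_J$, and reduce the $t$-exponent via the identity you state (which is exactly the paper's $\tfrac{1}{2}i_1(i_1+2j_1-1)+\binom{j_1-i_2}{2}=\binom{j_2}{2}+i_1i_2$ in partition coordinates). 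The $Q$ case and its extension to $\Sig_n$ by translation invariance are handled identically.
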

\begin{proof}
We begin with \eqref{eq:finite_skew_formula}. 
In this case we may apply \Cref{thm:formula_from_bf} to compute 
\begin{equation}\label{eq:switch_to_F_finite}
    \text{LHS\eqref{eq:finite_skew_formula}} = F_{\mu/\lambda}(u,\ldots,ut^{J-1})\Big \vert_{s=0} \prod_{i \geq 0} \frac{(t;t)_{m_i(\lambda)}}{(t;t)_{m_i(\mu)}}.
\end{equation}
When $s \to 0$, the factor $s^{j_2-i_1}(us^{-1};t)_{j_1-i_2}$ in \eqref{eq:weights_with_s} converges to $(-u)^{j_1-i_2}t^{\binom{j_1-i_2}{2}}$ (using that $j_2-i_1 = j_1-i_2$). The sign cancels with the sign in \eqref{eq:weights_with_s}, and the power of $u$ combines with the $u^{i_1}$ in \eqref{eq:weights_with_s} to give $u^{j_2}$, so \eqref{eq:weights_with_s} becomes
\begin{equation}\label{eq:weights_with_s=0}
     w_{t^n}^{(J)}(i_1,j_1;i_2,j_2) = \delta_{i_1+j_1,i_2+j_2}u^{j_2}\frac{t^{\frac{1}{2}i_1(i_1+2j_1-1)+\binom{j_1-i_2}{2}}(t;t)_{j_1}}{(t;t)_{i_1}(t;t)_{j_2}} \qhyp
     \left(\begin{matrix} 
t^{-i_1};t^{-i_2},0,0 \\ 
0,t^{1+j_1-i_2},t^{1+J-i_1-j_1} \end{matrix} 
; t,t \right).
\end{equation}
In the product \eqref{eq:xproduct_general_s} when the weights are specialized to \eqref{eq:weights_with_s=0}, some of the factors simplify, as
\begin{equation}
    \prod_{x \geq 0} \frac{(t;t)_{j_1}}{(t;t)_{i_1}(t;t)_{j_2}} = \frac{(t;t)_J}{\prod_{x \in \Z} (t;t)_{i_1(x)}}
\end{equation}
because the $\frac{(t;t)_{j_1(x)}}{(t;t)_{j_2(x)}}$ factor cancels except for a $(t;t)_J$ from the paths incoming from the left. Hence
\begin{equation}\label{eq:final_finite_weight_product}
    \prod_{x \geq 0} w_{u}^{(J)}(i_1,j_1;i_2,j_2) = (t;t)_J \prod_{x \geq 0} u^{j_2} \frac{t^{\frac{1}{2}i_1(i_1+2j_1-1)+\binom{j_1-i_2}{2}}}{(t;t)_{m_x(\lambda)}} \qhypm
     \left(\begin{matrix} 
t^{-i_1};t^{-i_2},0 \\ 
t^{1+j_1-i_2},t^{1+J-i_1-j_1} \end{matrix} 
; t,t \right).
\end{equation}
Using that $j_2=i_1+j_1-i_2$ simplifies the exponent of $t$ in \eqref{eq:final_finite_weight_product} to
\[
\frac{1}{2}i_1(i_1+2j_1-1)+\binom{j_1-i_2}{2} = \binom{j_2}{2} + i_1i_2.
\]
To convert to the form in terms of partitions, we record the following translations between the $i$'s and $j$'s and the usual conjugate partition notation: 
\begin{align}\label{eq:ijs_to_partitions}
\begin{split}
    i_1(x) &= \lambda_x'-\lambda_{x+1}' = m_x(\lambda), \\
    j_1(x) &= \mu_x'-\lambda_x', \\
    i_2(x) &= \mu_x'-\mu_{x+1}' = m_x(\mu), \\
    j_2(x) &= \mu_{x+1}'-\lambda_{x+1}'.
\end{split}
\end{align}
Translating \eqref{eq:final_finite_weight_product} into partition notation and multiplying by the $ \prod_{i \geq 0} \frac{(t;t)_{m_i(\lambda)}}{(t;t)_{m_i(\mu)}}$ factor of \eqref{eq:switch_to_F_finite} yields \eqref{eq:finite_skew_formula}.

To prove \eqref{eq:finite_skew_Q} we first note that both sides of \eqref{eq:finite_skew_Q} are translation-invariant, so without loss of generality we may take $\lambda,\nu \in \Sig_n^{> 0}$. We then likewise appeal to \Cref{thm:formula_from_bf} and either make the same argument as above or deduce it from \eqref{eq:finite_skew_formula} by considering $F_{(\nu,0[J])/\lambda}(u,\ldots,ut^{J-1})$ for $\lambda,\nu \in \Sig_n^{> 0}$ and using the relation between $P$ and $Q$ polynomials. Since $\lambda,\nu$ are of the same length we have
\[
\prod_{x \geq 0} t^{\binom{j_2(x)}{2}} = t^{n(\nu/\lambda)}
\]
by \eqref{eq:ijs_to_partitions}. Finally, note that the product can be extended from $x \geq 0$ to $x \in \Z$, which in this translation-invariant setting is more aesthetically appealing.
\end{proof}

\begin{rmk}
While it follows from the branching rule that for nonnegative signatures $\mu,\lambda$ of appropriate lengths,
\[
P_{(\mu,0)/(\lambda,0)}(u,\ldots,ut^{J-1}) = P_{\mu/\lambda}(u,\ldots,ut^{J-1}),
\]
see \eqref{eq:skewP_add_zeros}, this relation is not readily apparent from \eqref{eq:finite_skew_formula}. The only term on the RHS of \eqref{eq:finite_skew_formula} which a priori might differ after padding $\lambda,\mu$ with zeros is the $x=0$ term of the product. It may be checked that this term is in fact unchanged by padding with zeros, but this is not immediately obvious from the formula as written.
\end{rmk}

The next result takes the $J \to \infty$ limit of \Cref{thm:finite_skew_fn_computation}. Recall from \Cref{sec:prelim} that if $\mu,\lambda \in \Y$, $P_{\mu/\lambda} \in \Lambda$ is a polynomial in the power sums $p_1,p_2,\ldots$. Hence given any infinite sequence of complex numbers $a_1,a_2,\ldots$ such that the sums $p_k(a_1,a_2,\ldots)$ converge (it suffices for this to hold for $p_1$), we may define $P_{\mu/\lambda}(a_1,a_2,\ldots) \in \C$. This is how $P_{\mu/\lambda}(u,ut,\ldots)$ is to be interpreted below, and similarly for $Q_{\nu/\lambda}(u,ut,\ldots)$.

\begin{thm}\label{thm:skew_formula}
For $\mu,\lambda \in \Y$, we have
\begin{align}\label{eq:princ_skew_P}
\begin{split}
P_{\mu/\lambda}(u,ut,\ldots) &= u^{|\mu|-|\lambda|} t^{n(\mu/\lambda)}\prod_{x \geq 1} \frac{(t^{1+\mu_x'-\lambda_x'};t)_{m_x(\lambda)}}{(t;t)_{m_x(\mu)}}.
\end{split}
\end{align}
For $\lambda,\nu \in \Sig_n$, 
\begin{align}\label{eq:princ_skew_Q}
    Q_{\nu/\lambda}(u,ut,\ldots) = u^{|\nu|-|\lambda|} t^{n(\nu/\lambda)}\prod_{x \in \Z} \frac{(t^{1+\nu_x'-\lambda_x'};t)_{m_x(\lambda)}}{(t;t)_{m_x(\lambda)}} 
\end{align}
\end{thm}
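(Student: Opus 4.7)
The strategy is to take the $J \to \infty$ limit of the finite formulas in \Cref{thm:finite_skew_fn_computation}. The common analytic ingredient is that the bottom parameter $t^{1+J-\cdot+\cdot}$ inside the terminating $_3\bar\phi_2$ tends to $0$, and since $(0;t)_m = 1$ the $_3\bar\phi_2$ degenerates to a $_2\phi_1$ summable by the classical terminating $q$-Chu-Vandermonde identity
\[
{}_2\phi_1\!\left(\begin{matrix} t^{-n}, a \\ b \end{matrix}; t, t\right) = a^n \frac{(b/a;t)_n}{(b;t)_n}.
\]
With $n = m_x(\lambda)$, $a = t^{-m_x(\nu)}$, $b = t^{1+\nu_{x+1}'-\lambda_x'}$, and the identity $\nu_x' = \nu_{x+1}' + m_x(\nu)$, this produces $(b/a;t)_n = (t^{1+\nu_x'-\lambda_x'};t)_{m_x(\lambda)}$, and the factor $a^n = t^{-m_x(\nu)m_x(\lambda)}$ precisely cancels the explicit $t^{m_x(\lambda)m_x(\nu)}$ appearing in the finite formulas.

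For \eqref{eq:princ_skew_Q} I would first reduce to the case $\nu,\lambda \in \Sig_n^{\geq 0}$ by translation invariance, noting that both sides of \eqref{eq:princ_skew_Q} as well as the finite formula \eqref{eq:finite_skew_Q} are invariant under the simultaneous shift $\nu,\lambda \mapsto \nu+D[n],\lambda+D[n]$. For such $\nu,\lambda$ the product over $x \in \Z$ has only finitely many nontrivial factors, the convergence $Q_{\nu/\lambda}(u,\ldots,ut^{J-1}) \to Q_{\nu/\lambda}(u,ut,\ldots)$ is automatic at the level of symmetric functions, and the $q$-Chu-Vandermonde computation above directly yields \eqref{eq:princ_skew_Q}.

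For \eqref{eq:princ_skew_P} I would fix $\mu,\lambda \in \Y$, pick $n \geq \ell(\lambda)$, view $\lambda \in \Sig_n^{\geq 0}$, and pad $\mu$ with zeros to length $n+J$; by \eqref{eq:skewP_add_zeros} this leaves the skew polynomial unchanged. For each $x \geq 1$ the $q$-Chu-Vandermonde argument gives in the limit the factor $\frac{t^{\binom{\mu_{x+1}'-\lambda_{x+1}'}{2}}(t^{1+\mu_x'-\lambda_x'};t)_{m_x(\lambda)}}{(t;t)_{m_x(\mu)}}$. The delicate point is the $x = 0$ term, where $m_0(\mu) = n+J-\ell(\mu) \to \infty$. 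Here the bottom parameter $t^{1+J-\mu_0'+\lambda_1'}$ simplifies to $t^{1-m_0(\lambda)} = t \cdot t^{-m_0(\lambda)}$, i.e.\ exactly $t$ times the top parameter, so it does not vanish in the limit. Instead, the factor $(t^{1-m_0(\lambda)+k};t)_{m_0(\lambda)-k}$ appearing in the $_3\bar\phi_2$ sum contains $1 - t^0 = 0$ for every $k < m_0(\lambda)$, so only the single summand $k = m_0(\lambda)$ survives. Evaluating this surviving term using the standard identity $(t^{-N};t)_n = (-1)^n t^{\binom{n}{2}-nN}(t;t)_N/(t;t)_{N-n}$, the signs, the $t^{\pm \binom{m_0(\lambda)}{2}\pm \binom{m_0(\lambda)+1}{2}}$, the $t^{\pm m_0(\lambda)m_0(\mu)}$, and the factors $(t;t)_{m_0(\mu)}$ all cancel; combined with the $(t;t)_J$ prefactor, the net $x=0$ contribution is $(t;t)_J \cdot t^{\binom{\mu_1'-\lambda_1'}{2}} / (t;t)_{J-\ell(\mu)+\ell(\lambda)}$, which tends to $t^{\binom{\mu_1'-\lambda_1'}{2}}$ as $J \to \infty$. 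Combining this with the $x \geq 1$ powers $t^{\binom{\mu_{x+1}'-\lambda_{x+1}'}{2}}$ yields a total $t^{\sum_{y \geq 1}\binom{\mu_y'-\lambda_y'}{2}} = t^{n(\mu/\lambda)}$, matching \eqref{eq:princ_skew_P}.

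The main obstacle is precisely the $x=0$ analysis in the $P$ case: one must check that the individually divergent ingredients $(t;t)_J$, $1/(t;t)_{m_0(\mu)}$, and $(t^{-m_0(\mu)};t)_{m_0(\lambda)}$ conspire to yield a finite limit independent of the padding parameter $n$ chosen at the outset. Once this cancellation is carried out, the remainder of the argument is routine bookkeeping with $q$-Pochhammer symbols and the identification of $\binom{\mu_1'-\lambda_1'}{2} + \sum_{x \geq 1}\binom{\mu_{x+1}'-\lambda_{x+1}'}{2}$ with $n(\mu/\lambda)$.
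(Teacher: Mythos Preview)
Your proposal is correct and follows essentially the same route as the paper: take $J \to \infty$ in \Cref{thm:finite_skew_fn_computation}, collapse the $_3\bar\phi_2$ to a $_2\phi_1$ for $x \geq 1$ and sum it by $q$-Chu--Vandermonde (the paper calls it $q$-Gauss, \eqref{eq:from_koepf}), and treat $x=0$ separately. Your $x=0$ analysis is in fact slightly sharper than the paper's: you observe that the bottom parameter $t^{1+J-\mu_0'+\lambda_1'}$ equals $t^{1-m_0(\lambda)}$ exactly, forcing every summand with $k<m_0(\lambda)$ to vanish identically, whereas the paper argues only that the $k=m_0(\lambda)$ term dominates asymptotically as $J\to\infty$; both lead to the same limit.
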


\begin{proof}
For $n \geq \len(\lambda), n+J \geq \len(\mu)$, we may identify $\mu,\lambda \in \Y$ with nonnegative signatures $\mu(n+J) \in \Sig_{n+J}^{\geq 0}, \lambda(n) \in \Sig_n^{\geq 0}$ given by truncating. Hence to compute
\[
P_{\mu/\lambda}(u,ut,\ldots)
\]
it suffices to take $J \to \infty$ in \eqref{eq:finite_skew_formula}. The polynomial $P_{\mu(n+J)/\lambda(n)}$ is independent of $n$ for all $n$ sufficiently large, see \eqref{eq:skewP_add_zeros}, so we will fix $n$ and will abuse notation below and write $\lambda$ for $\lambda(n)$. We first pull the $1/(t;t)_{m_x(\mu)}$ out of the product, and note that $m_0(\mu(n+J)) \to (t;t)_\infty$ as $J \to \infty$, cancelling the $(t;t)_J$ term of \eqref{eq:finite_skew_formula}. We write the remaining term inside the product in \eqref{eq:finite_skew_formula} as 
\begin{equation}\label{eq:split_product}
    \left(t^{m_x(\lambda)m_x(\mu(n+J))}\qhypm
     \left(\begin{matrix} 
t^{-m_x(\lambda)};t^{-m_x(\mu(n+J))},0 \\ 
t^{1+\mu(n+J)_{x+1}'-\lambda_x'},t^{1+J-\mu(n+J)_x'+\lambda_{x+1}'}\end{matrix} 
; t,t \right)\right).
\end{equation}
To show \eqref{eq:princ_skew_P} it suffices to show that for $x > 0$,
\begin{equation}\label{eq:qhyp_convergence}
    \lim_{J \to \infty} t^{m_x(\lambda)m_x(\mu(n+J))}\qhypm
     \left(\begin{matrix} 
t^{-m_x(\lambda)};t^{-m_x(\mu(n+J))},0 \\ 
t^{1+\mu(n+J)_{x+1}'-\lambda_x'},t^{1+J-\mu(n+J)_x'+\lambda_{x+1}'}\end{matrix} 
; t,t \right) = (t^{1+\mu_x'-\lambda_x'};t)_{m_x(\lambda)}.
\end{equation}
and for $x=0$,
\begin{equation}\label{eq:qhyp_convergence_degenerate}
     \lim_{J \to \infty} t^{m_x(\lambda)m_x(\mu(n+J))}\qhypm
     \left(\begin{matrix} 
t^{-m_x(\lambda)};t^{-m_x(\mu(n+J))},0 \\ 
t^{1+\mu(n+J)_{x+1}'-\lambda_x'},t^{1+J-\mu(n+J)_x'+\lambda_{x+1}'}\end{matrix} 
; t,t \right) = 1.
\end{equation}
We begin with \eqref{eq:princ_skew_P}. Then $1+J-\mu(n+J)_x'+\lambda_{x+1}' \to \infty$ and all other arguments in the $q$-hypergeometric function remain the same, so the LHS of \eqref{eq:qhyp_convergence} is 
\begin{equation}\label{eq:qhyp_J_infty}
    \qhypl
     \left(\begin{matrix} 
t^{-m_x(\lambda)};t^{-m_x(\mu)} \\ 
t^{1+\mu_{x+1}'-\lambda_x'} \end{matrix} 
; t,t \right) = \sum_{\ell=0}^{m_x(\lambda)} t^\ell \frac{(t^{-m_x(\lambda)};t)_\ell}{(t;t)_\ell}(t^{-m_x(\mu)};t)_\ell(t^{1+\mu_{x+1}'-\lambda_x'+\ell};t)_{m_x(\lambda)-\ell}.
\end{equation}
To apply known identities, we reexpress the above in terms of the more standard terminating $q$-hypergeometric series $\;_2 \phi_1$ as
\begin{multline}\label{eq:change_qhyp_normalization}
   (t^{1+\mu_{x+1}'-\lambda_x'};t)_{m_x(\lambda)} \sum_{\ell=0}^{m_x(\lambda)} t^\ell \frac{(t^{-m_x(\lambda)};t)_\ell(t^{-m_x(\mu)};t)_\ell}{(t;t)_\ell (t^{1+\mu_{x+1}'-\lambda_x'};t)_\ell} \\ = (t^{1+\mu_{x+1}'-\lambda_x'};t)_{m_x(\lambda)} \;_2 \phi_1  \left(\begin{matrix} 
t^{-m_x(\lambda)};t^{-m_x(\mu)} \\ 
t^{1+\mu_{x+1}'-\lambda_x'} \end{matrix} 
; t,t \right). 
\end{multline}
By a special case of the $q$-Gauss identity, see e.g. \cite[Exercise 3.17]{koepf1998hypergeometric},
\begin{equation}\label{eq:from_koepf}
    _2\phi_1 \left(\begin{matrix} 
t^{-n}; b \\ 
c \end{matrix} 
; t,t \right) = \frac{(c/b;t)_n}{(c;t)_n}b^n.
\end{equation}
Applying \eqref{eq:from_koepf} with $b=t^{-m_x(\mu)},c=t^{1+\mu_{x+1}'-\lambda_x'}$ to \eqref{eq:change_qhyp_normalization} yields
\begin{equation}\label{eq:apply_koepf}
    \qhypl
     \left(\begin{matrix} 
t^{-m_x(\lambda)};t^{-m_x(\mu)} \\ 
t^{1+\mu_{x+1}'-\lambda_x'} \end{matrix} 
; t,t \right) = (t^{1+\mu_x'-\lambda_x'};t)_{m_x(\lambda)} t^{-m_x(\lambda)m_x(\mu)},
\end{equation}
which shows \eqref{eq:qhyp_convergence}. 

We now show \eqref{eq:qhyp_convergence_degenerate}, so let $x=0$. Then $\mu_x(n+J)' = n+J$, so the arguments of the $q$-hypergeometric function in \eqref{eq:qhyp_convergence} are independent of $J$ except for $t^{-m_0(\mu(n+J))}$. In the sum
\begin{multline*}
t^{m_x(\lambda)m_x(\mu(n+J))}\qhypm
     \left(\begin{matrix} 
t^{-m_x(\lambda)};t^{-m_x(\mu(n+J))},0 \\ 
t^{1+\mu(n+J)_{x+1}'-\lambda_x'},t^{1+J-\mu(n+J)_x'+\lambda_{x+1}'}\end{matrix} 
; t,t \right) 
= t^{m_x(\lambda)m_x(\mu(n+J))} \\
\times \sum_{k=0}^{m_x(\lambda)} t^k \frac{(t^{-m_x(\lambda)};t)_k}{(t;t)_k}(t^{-m_x(\mu(n+J))};t)_k (t^{k+1+\mu(n+J)_{x+1}'-\lambda_x'};t)_{m_x(\lambda)-k} (t^{k+1-n+\lambda_{x+1}'};t)_{m_x(\lambda)-k},
\end{multline*}
the dominant term as $J \to \infty$ is the $k=m_x(\lambda)$ term, and its limit when normalized by $t^{m_x(\lambda)m_x(\mu(n+J))}$ is $1$. This shows \eqref{eq:qhyp_convergence_degenerate}.

The proof of \eqref{eq:princ_skew_Q} using \eqref{eq:finite_skew_Q} is exactly analogous except that only \eqref{eq:qhyp_convergence} is needed because there are only $n$ paths.
\end{proof}

\section{The {$t$}-deformed Gelfand-Tsetlin graph and its boundary} \label{sec:branching_graphs}

Let $t \in (0,1)$ for the remainder of the section. In this section we introduce the Hall-Littlewood Gelfand-Tsetlin graph and the notion of its boundary, the set of extreme coherent systems. The main result stated earlier, \Cref{thm:boundary}, is that the boundary is naturally in bijection with the set $\Sig_\infty$ of infinite signatures. We will break it into three parts: \Cref{thm:find_boundary} gives an explicit coherent system of measures $(M_n^\mu)_{n \geq 1}$ for each $\mu \in \Sig_\infty$, \Cref{thm:all_boundary1} tells that every extreme coherent system must be one of these, and \Cref{thm:all_boundary2} tells that each system $(M_n^\mu)_{n \geq 1}$ is extreme. 

The general structure of the proof of \Cref{thm:boundary}, via the so-called Vershik-Kerov ergodic method, is similar to e.g. \cite[Theorem 6.2]{olshanski2016extended} or \cite{cuenca2018asymptotic}. A good general reference for (unweighted) graded graphs, with references to research articles, is \cite[Chapter 7]{borodin2017representations}. 

\subsection{Classifying the boundary.}

\begin{defi}\label{def:our_graph}
$\G$ is the weighted, graded graph with vertices
\[
\bigsqcup_{n \geq 1} \Sig_n
\]
partitioned into \emph{levels} indexed by $\Z_{\geq 1}$. The only edges of $\G$ are between vertices on levels differing by $1$.
Between every $\lambda \in \Sig_n, \mu \in \Sig_{n+1}$ there is a weighted edge with weight
\[
L_n^{n+1}(\mu,\lambda) := P_{\mu/\lambda}(t^n) \frac{P_\lambda(1,\ldots,t^{n-1})}{P_\mu(1,\ldots,t^n)},
\]
and these weights are called \emph{cotransition probabilities} or \emph{(stochastic) links}. We use $L^{n+1}_n$ to denote the (infinite) $\Sig_{n+1} \times \Sig_n$ matrix with these weights.
\end{defi}

Note $L^{n+1}_n$ is a stochastic matrix by the branching rule. More generally, for $m \in \Z_{\geq 1} \cup \{\infty\}$, $1 \leq n < m$, and $\mu \in \Sig_m, \lambda \in \Sig_n$ we let
\begin{equation}\label{eq:def_Lmn}
L^m_n(\mu,\lambda) := P_{\mu/\lambda}(t^n,\ldots,t^{m-1}) \frac{P_\lambda(1,\ldots,t^{n-1})}{P_\mu(1,\ldots,t^{m-1})}.    
\end{equation}
When $m$ is finite one has $L^m_n = L^{n+1}_n L^{n+2}_n \cdots L^m_{m-1}$, where the product is just the usual matrix product. 

\begin{rmk}
The cotransition probabilities define (deterministic) maps $\cM(\Sig_m) \to \cM(\Sig_n)$, where here and below we use $\cM$ to denote the space of Borel probability measures, in this case with respect to the discrete topology on the set of signatures.
\end{rmk}

\begin{rmk}\label{rmk:trans_invariant_link}
\Cref{rmk:P_trans_inv} implies translation-invariance 
\begin{equation}\label{eq:trans_invariant_link}
  L^m_n(\mu,\lambda) = L^m_n(\mu+D[m],\lambda+D[n])  
\end{equation}
of the cotransition probabilities.
\end{rmk}

The cotransition probabilities have explicit formulas courtesy of the results of \Cref{sec:skew_formulas}, which will be useful in the proofs of \Cref{thm:conv_means_regular} and \Cref{thm:all_boundary1} later. For $\lambda \in \Sig_n$, we let 
\[
\sqbinom{n}{\lambda}_t = \frac{(t;t)_n}{\prod_{i \in \Z} (t;t)_{m_i(\lambda)}}
\]
(the $t$-deformed multinomial coefficient).

\begin{cor}\label{thm:link_explicit}
For $\mu \in \Sig_{n+J}, \lambda \in \Sig_n$,
\begin{equation}\label{eq:link_explicit}
    L^{n+J}_n(\mu,\lambda) = \frac{1}{\sqbinom{n+J}{J}_t} \prod_{x \in \Z} \frac{t^{(n-\lambda_{x}')(\mu_{x}'-\lambda_{x}') + m_x(\lambda)m_x(\mu)}}{(t;t)_{m_x(\lambda)}}\qhypm
     \left(\begin{matrix} 
t^{-m_x(\lambda)};t^{-m_x(\mu)},0 \\ 
t^{1+\mu_{x+1}'-\lambda_x'},t^{1+J-\mu_x'+\lambda_{x+1}'}\end{matrix} 
; t,t \right).
\end{equation}
\end{cor}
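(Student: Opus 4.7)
The corollary is a direct computation: unwind the definition
\[
L^{n+J}_n(\mu,\lambda) = P_{\mu/\lambda}(t^n,\ldots,t^{n+J-1}) \cdot \frac{P_\lambda(1,\ldots,t^{n-1})}{P_\mu(1,\ldots,t^{n+J-1})}
\]
by substituting the principal specialization formula (Proposition \ref{thm:hl_principal_formulas}) for the two non-skew denominators/numerators, and the finite skew formula \eqref{eq:finite_skew_formula} of Proposition \ref{thm:finite_skew_fn_computation} for the skew factor. The only subtlety is that \eqref{eq:finite_skew_formula} is stated for $\lambda \in \Sig_n^{\geq 0}, \mu \in \Sig_{n+J}^{\geq 0}$, but this is no loss of generality: by Remark \ref{rmk:trans_invariant_link} the cotransition probability is translation-invariant, so we may first shift $\lambda,\mu$ by a common constant to land in the nonnegative regime, verify the formula, and then observe that the right-hand side of \eqref{eq:link_explicit} is itself translation-invariant (each factor depends only on the $m_x$'s and on the differences $\mu_x'-\lambda_x'$, $\mu_x'-\lambda_{x+1}'$, $\mu_{x+1}'-\lambda_x'$, and the quantity $n-\lambda_x'$ which stays put after a shared shift).

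After substitution, the combinatorial prefactor collapses: the $\prod_x (t;t)_{m_x(\mu)}$ from $P_\mu(1,\ldots,t^{n+J-1})^{-1}$ cancels the $\prod_x (t;t)_{m_x(\mu)}^{-1}$ appearing inside the product in \eqref{eq:finite_skew_formula}, the $\prod_x (t;t)_{m_x(\lambda)}^{-1}$ from $P_\lambda(1,\ldots,t^{n-1})$ becomes the factor appearing in \eqref{eq:link_explicit}, and the three $t$-Pochhammers combine as
\[
\frac{(t;t)_J (t;t)_n}{(t;t)_{n+J}} = \frac{1}{\sqbinom{n+J}{J}_t}.
\]
The $_3\bar\phi_2$ factor and the $t^{m_x(\lambda)m_x(\mu)}$ factor are carried over from \eqref{eq:finite_skew_formula} unchanged.

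The only nontrivial step is to match the remaining powers of $t$. Writing the exponent coming from the three specializations as
\[
E := n(|\mu|-|\lambda|) + n(\lambda) - n(\mu) + \sum_{x \geq 1} \binom{\mu_{x+1}'-\lambda_{x+1}'}{2},
\]
we must verify that $E = \sum_{x \in \Z} (n-\lambda_x')(\mu_x' - \lambda_x')$. Using $n(\lambda) = \sum_{x \geq 1} \binom{\lambda_x'}{2}$ and $|\lambda| = \sum_{x \geq 1} \lambda_x'$, the elementary identity
\[
\binom{\lambda_x'}{2} - \binom{\mu_x'}{2} + \binom{\mu_x'-\lambda_x'}{2} = \lambda_x'(\lambda_x' - \mu_x')
\]
reduces the required equality to $\sum_{x \geq 1}[n(\mu_x' - \lambda_x') + \lambda_x'(\lambda_x' - \mu_x')] = \sum_{x \geq 1}(n-\lambda_x')(\mu_x'-\lambda_x')$, which is immediate. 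The terms with $x = 0$ and $x < 0$ in the product of \eqref{eq:link_explicit} contribute trivially in the nonnegative case ($\lambda_0' = n$, $\mu_0' = n+J$ kills the weight at $x=0$; for $x < 0$, $m_x(\lambda) = m_x(\mu) = 0$ collapses everything to $1$), which is what allows the sum/product to be extended from $x \geq 1$ to $x \in \Z$.

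The main (mild) obstacle is simply bookkeeping: keeping track of which terms in the product index set are genuinely present and which are trivial, and checking that the exponent identity above goes through. Everything else is routine algebraic manipulation from the results already established in Sections \ref{sec:prelim} and \ref{sec:skew_formulas}.
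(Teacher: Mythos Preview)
Your proposal is correct and follows essentially the same route as the paper: reduce to nonnegative signatures by translation-invariance, substitute \eqref{eq:finite_skew_formula} and Proposition~\ref{thm:hl_principal_formulas}, cancel the $(t;t)_{m_x(\mu)}$ factors, collect the three Pochhammers into $\sqbinom{n+J}{J}_t^{-1}$, and simplify the $t$-exponent via the identity $\binom{a}{2}-\binom{b}{2}+\binom{b-a}{2}=a(a-b)$ before extending the product to $x\in\Z$. Two small slips to fix in a final write-up: your displayed $E$ should have $\sum_{x\ge 1}\binom{\mu_x'-\lambda_x'}{2}$ (equivalently $\sum_{x\ge 0}\binom{\mu_{x+1}'-\lambda_{x+1}'}{2}$), matching the identity you then apply; and the $x=0$ factor of \eqref{eq:link_explicit} is not literally trivial---only the exponent $(n-\lambda_0')(\mu_0'-\lambda_0')$ vanishes there, while the $\tfrac{t^{m_0(\lambda)m_0(\mu)}}{(t;t)_{m_0(\lambda)}}\,{}_3\bar\phi_2$ piece is already present from the $x\ge 0$ product in \eqref{eq:finite_skew_formula}.
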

\begin{proof}
By the translation-invariance of \Cref{rmk:trans_invariant_link}, it suffices to prove the case when $\mu,\lambda$ are nonnegative signatures. We combine the formula of \Cref{thm:finite_skew_fn_computation} for $P_{\mu/\lambda}(t^n,\ldots,t^{n+J-1})$ with the one from \Cref{thm:hl_principal_formulas} for the principally specialized non-skew Hall-Littlewood polynomial. By the latter,
\begin{equation}\label{eq:other_factor}
    \frac{P_{\lambda}(1,\ldots,t^{n-1})}{P_{\mu}(1,\ldots,t^{n+J-1})} = \frac{(t;t)_n}{(t;t)_{n+J}} t^{n(\lambda)-n(\mu)} \prod_{i \geq 0} \frac{(t;t)_{m_i(\mu)}}{(t;t)_{m_i(\lambda)}}.
\end{equation}
Note also that by the definition of $n(\lambda)$,
\begin{equation}\label{eq:convert_from_n}
    t^{n(\lambda)-n(\mu)} = \prod_{x \geq 0} t^{\binom{\lambda_{x+1}'}{2}-\binom{\mu_{x+1}'}{2}},
\end{equation}
so by the identity 
\[
\binom{a+b}{2}-\binom{a}{2}-\binom{b}{2} = ab
\]
we have
\begin{equation}\label{eq:simplify_t_exp}
    t^{n(\lambda)-n(\mu)}\prod_{x \geq 0} t^{\binom{\mu_{x+1}'-\lambda_{x+1}'}{2}} = \prod_{x \geq 0} t^{-\lambda_{x+1}'(\mu_{x+1}'-\lambda_{x+1}')}.
\end{equation}

Simplifying the product of \eqref{eq:finite_skew_formula} with \eqref{eq:other_factor} by the above manipulations yields
\begin{align*}
\begin{split}
L^{n+J}_n(\mu,\lambda) = \frac{1}{\sqbinom{n+J}{J}_t} \prod_{x \geq 0} \frac{t^{(n-\lambda_{x+1}')(\mu_{x+1}'-\lambda_{x+1}') + m_x(\lambda)m_x(\mu)}}{(t;t)_{m_x(\lambda)}}\qhypm
     \left(\begin{matrix} 
t^{-m_x(\lambda)};t^{-m_x(\mu)},0 \\ 
t^{1+\mu_{x+1}'-\lambda_x'},t^{1+J-\mu_x'+\lambda_{x+1}'}\end{matrix} 
; t,t \right).
\end{split}
\end{align*}
The product may be extended to all $x \in \Z$ since all other terms are $1$, at which point it is manifestly translation-invariant, which yields the result for arbitrary signatures.
\end{proof}

\begin{defi}
A sequence $(M_n)_{n \geq 1}$ of probability measures on $\Sig_1,\Sig_2,\ldots$ is \emph{coherent} if 
\[
\sum_{\mu \in \Sig_{n+1}} M_{n+1}(\mu) L_n^{n+1}(\mu,\lambda) = M_n(\lambda)
\]
for each $n \geq 1$ and $\lambda \in \Sig_n$.
\end{defi}

\begin{defi}
A coherent system of measures $(M_n)_{n \geq 1}$ is \emph{extreme} if there do not exist coherent systems $(M_n')_{n \geq 1},(M_n'')_{n \geq 1}$ different from $(M_n)_{n \geq 1}$ and $s \in (0,1)$ such that $M_n = sM_n' + (1-s)M_n''$ for each $n$. The set of extreme coherent systems of measures on a weighted, graded graph is called its \emph{boundary}, and denoted in our case by $\partial \G$.
\end{defi}

In the previous section we considered both signatures (of finite length), and integer partitions, which have infinite length but stabilize to $0$. To describe points on the boundary $\partial \G$ in this section, it turns out that it will be necessary to introduce signatures of infinite length which are not partitions.

\begin{defi}
We denote the set of infinite signatures by
\[
\Sig_\infty := \{(\mu_1,\mu_2,\ldots) \in \Z^\infty: \mu_1 \geq \mu_2 \geq \ldots\}.
\]
We refer to the $\mu_i$ as \emph{parts} just as with partitions, and define $\mu_i'$ and $m_i(\mu)$ the exact same way, though we must allow them to be equal to $\infty$.
\end{defi}

A distinguished subset of $\Sig_\infty$ is $\Y$, the set of partitions. Translating by any $D \in \Z$ yields 
\[
\Y + D = \{ \mu \in \Sig_\infty: \mu_i = D \text{ for all but finitely many }i\}.
\]
However, $\Sig_\infty$ also contains infinite signatures with parts not bounded below, the set of which we denote by 
\[
\Sig_\infty^{unstable} := \{\mu \in \Sig_\infty: \lim_{i \to \infty} \mu_i = -\infty\}.
\]
It is clear that 
\[
\Sig_\infty = \Sig_\infty^{unstable} \sqcup \bigsqcup_{D \in \Z} (\Y + D)
\]
and we will use this decomposition repeatedly in what follows. To treat the unbounded signatures we will approximate by signatures in $\Y+D$, which are no more complicated than partitions, and to this end we introduce the following notation.

\begin{defi}
For $\lambda \in \Sig_\infty^{unstable}$ and $D \in \Z$, we let 
\[
\lambda^{(D)} = (\lambda_1,\ldots,\lambda_k, D, D, \ldots) \in \Y+D
\]
where $k$ is the largest index such that $\lambda_k > D$.
\end{defi}

The first step to proving \Cref{thm:boundary} is, for each element of $\Sig_\infty$, an explicit formula for a corresponding coherent system of measures on $\G$; we will later show that these are exactly the boundary points. 

\begin{prop}\label{thm:find_boundary}
For each $\mu \in \Sig_\infty$, there exists a coherent system of measures $(M_n^{\mu})_{n \geq 1}$ on $\G$, given explicitly by
\begin{equation}\label{eq:explicit_measures}
    M_n^\mu(\lambda) := 
    \sqbinom{n}{\lambda}_t \prod_{x \in \Z} t^{(\mu'_x - \lambda'_x)(n-\lambda'_x)} (t^{1+\mu'_x-\lambda'_x};t)_{m_x(\lambda)}.
\end{equation}
for $\lambda \in \Sig_n$.
\end{prop}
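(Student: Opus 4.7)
The plan is to realize each measure $M_n^\mu$ for $\mu \in \Sig_\infty$ as the $N \to \infty$ limit of the stochastic cotransition probabilities $L^N_n(\mu(N), \cdot)$, where $\mu(N) := (\mu_1, \ldots, \mu_N) \in \Sig_N$ is the natural finite truncation. For $\mu \in \Y + D$ this truncation eventually just appends copies of $D$, while for $\mu \in \Sig_\infty^{unstable}$ the bottom entries tend to $-\infty$; in either case $\mu(N)_x'$ converges to $\mu_x'$ for every $x$ (possibly with value $\infty$ in the former case, for $x \leq D$). The properties of $(M_n^\mu)_{n \geq 1}$ will then be inherited from the corresponding properties of the probability measures $L^N_n(\mu(N), \cdot)$.

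The pointwise convergence $\lim_{N \to \infty} L^N_n(\mu(N), \lambda) = M_n^\mu(\lambda)$ for each fixed $\lambda \in \Sig_n$ is the first key step. Starting from \eqref{eq:link_explicit} in \Cref{thm:link_explicit} with $N = n+J$, the prefactor $1/\sqbinom{n+J}{J}_t = (t;t)_n (t;t)_J / (t;t)_{n+J}$ tends to $(t;t)_n$, producing the $\sqbinom{n}{\lambda}_t$ factor in \eqref{eq:explicit_measures}. For each $x \in \Z$ the remaining factor $t^{m_x(\lambda) m_x(\mu(N))} \cdot {}_3\bar{\phi}_2(\cdots)$ can be analyzed exactly as in the proof of \Cref{thm:skew_formula}: in the generic case where $\mu_x'$ is finite the $q$-Gauss identity \eqref{eq:from_koepf} produces the limit $(t^{1+\mu_x'-\lambda_x'};t)_{m_x(\lambda)}$, parallel to \eqref{eq:qhyp_convergence}; in the degenerate case where $\mu_x' = \infty$ (occurring only when $\mu \in \Y+D$ and $x \leq D$), a separate computation of the dominant series term, parallel to \eqref{eq:qhyp_convergence_degenerate}, yields the limit $1$. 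Multiplying factor by factor recovers \eqref{eq:explicit_measures}.

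Once the pointwise limit is established, nonnegativity of $M_n^\mu$ is immediate, and both coherence $\sum_\nu M_{n+1}^\mu(\nu) L^{n+1}_n(\nu, \lambda) = M_n^\mu(\lambda)$ and total mass $\sum_\lambda M_n^\mu(\lambda) = 1$ follow by passing to the limit in the corresponding identities for $L^N_n(\mu(N), \cdot)$. The main obstacle is justifying the interchange of limit and sum, i.e.\ ruling out escape of probability mass in $\Sig_{n+1}$ (or $\Sig_n$) as $N \to \infty$. For $\mu \in \Y + D$ the support of each $L^N_n(\mu(N), \cdot)$ is contained in the finite set $\{\lambda \in \Sig_n: \mu_1 \geq \lambda_1 \geq \ldots \geq \lambda_n \geq D\}$, by iterating the interlacing condition underlying the cotransition probabilities, so the interchange is trivial. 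For $\mu \in \Sig_\infty^{unstable}$ the support is the infinite set $\{\lambda \in \Sig_n: \lambda_1 \leq \mu_1\}$, and a tightness argument is required; a direct comparison of the explicit formulas \eqref{eq:link_explicit} and \eqref{eq:explicit_measures} should produce a uniform upper bound of the form $L^N_n(\mu(N), \lambda) \leq C \cdot M_n^\mu(\lambda)$ for $N$ large, yielding dominated convergence.
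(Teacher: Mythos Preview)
Your pointwise convergence argument is correct and closely parallels the proof of \Cref{thm:skew_formula} and what the paper later records as \Cref{thm:conv_means_regular}. The approach also works cleanly for $\mu \in \Y + D$, where the uniform finite support you identify makes the interchange of limit and sum immediate.

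The genuine gap is in the unstable case. You assert that ``a direct comparison of the explicit formulas \eqref{eq:link_explicit} and \eqref{eq:explicit_measures} should produce a uniform upper bound of the form $L^N_n(\mu(N), \lambda) \leq C \cdot M_n^\mu(\lambda)$,'' but this is not established, and it is not obviously true. The formula \eqref{eq:link_explicit} involves the terminating ${}_3\bar\phi_2$ series, whose dependence on $J = N-n$ is not manifestly monotone or bounded by the limiting Pochhammer factor with a uniform constant; turning this into a dominating function over the infinite set $\{\lambda \in \Sig_n : \lambda_1 \leq \mu_1\}$ would require careful estimates of the kind appearing later in the proof of \Cref{thm:all_boundary1} (see the bound \eqref{eq:qhyp_to_bound}), and even then one must check that the number and size of nontrivial factors does not spoil uniformity in $\lambda$.

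The paper sidesteps this entirely by using a different approximation. Rather than truncating to finite signatures $\mu(N) \in \Sig_N$ and working with the finite links $L^N_n$, it approximates $\mu \in \Sig_\infty^{unstable}$ by $\mu^{(D)} \in \Y + D$ as $D \to -\infty$, and works directly with the \emph{infinite} principal specialization via \Cref{thm:measure_and_monotone}. The key observation there is that the resulting expression
\[
\sqbinom{n}{\lambda}_t \prod_{x > D} t^{(\mu_x' - \lambda_x')(n - \lambda_x')} (t^{1+\mu_x'-\lambda_x'};t)_{m_x(\lambda)}
\]
is \emph{monotone increasing} in $-D$ (in fact eventually constant once $D < \lambda_n$, since the added factors are all equal to $1$). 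This gives monotone convergence for free, both for showing $\sum_\lambda M_n^\mu(\lambda) = 1$ and for coherency, with no need for a dominating function. Your route via finite links could likely be completed, but it would require substantially more work than the monotone-convergence argument the paper uses.
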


Before proving \Cref{thm:find_boundary} we will calculate explicit formulas for the links $L^m_n$ in \Cref{thm:finite_skew_fn_computation}, which are a corollary to the formula for principally specialized skew functions in \Cref{thm:skew_formula}. 

\begin{cor}\label{thm:measure_and_monotone}
Let $n \geq 1$. If $ \lambda \in \Sig_n^{\geq 0}, \mu \in \Y$, then
\begin{equation}\label{eq:stable_formula}
    \frac{P_{\mu/\lambda}(t^n,t^{n+1},\ldots)P_{\lambda}(1,\ldots,t^{n-1})}{P_{\mu}(1,t,\ldots)} = \sqbinom{n}{\lambda}_t \prod_{x \in \Z_{>0}} t^{(\mu'_x - \lambda'_x)(n-\lambda'_x)} (t^{1+\mu'_x-\lambda'_x};t)_{m_x(\lambda)}.
\end{equation}
Furthermore, if instead $\lambda \in \Sig_n, \mu \in \Sig_\infty^{unstable}$, then
\begin{equation}\label{eq:unstable_formula}
    \frac{P_{(\mu^{(D)} - D[\infty])/(\lambda - D[n])}(t^n,t^{n+1},\ldots)P_{(\lambda - D[n])}(1,\ldots,t^{n-1})}{P_{(\mu^{(D)}-D[\infty])}(1,t,\ldots)} 
\end{equation}
increases monotonically as $D \to -\infty$, and stabilizes to 
\begin{equation}
    \sqbinom{n}{\lambda}_t \prod_{x \in \Z} t^{(\mu'_x - \lambda'_x)(n-\lambda'_x)} (t^{1+\mu'_x-\lambda'_x};t)_{m_x(\lambda)}
\end{equation}
for all $D < \lambda_n$.
\end{cor}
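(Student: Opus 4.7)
The overall strategy is direct: Part 1 of the corollary is obtained by plugging the explicit formulas for the principally specialized (skew) Hall-Littlewood polynomials into the left-hand side and simplifying; Part 2 is then reduced to Part 1 by translation-invariance, with the stabilization observation handled by a simple index-shifting argument.

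For Part 1, I would apply \Cref{thm:skew_formula} to obtain
\[
P_{\mu/\lambda}(t^n, t^{n+1}, \ldots) = t^{n(|\mu|-|\lambda|)} t^{n(\mu/\lambda)} \prod_{x \geq 1} \frac{(t^{1+\mu_x'-\lambda_x'};t)_{m_x(\lambda)}}{(t;t)_{m_x(\mu)}},
\]
then use \Cref{thm:hl_principal_formulas} for $P_\lambda(1,\ldots,t^{n-1})$ and pass to the limit $N \to \infty$ to get $P_\mu(1,t,\ldots) = t^{n(\mu)}/\prod_{x \geq 1}(t;t)_{m_x(\mu)}$ (the $(t;t)_N$ factor cancels with $(t;t)_{m_0(\mu;N)}$ as $N \to \infty$). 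The ratio in the statement then has the $(t;t)_{m_x(\mu)}$ factors cancel cleanly, and what remains to check is the exponent identity
\[
n(|\mu|-|\lambda|) + n(\mu/\lambda) + n(\lambda) - n(\mu) = \sum_{x \geq 1}(\mu_x' - \lambda_x')(n - \lambda_x').
\]
This reduces term-by-term in $x$ to the elementary identity $\binom{\mu_x'}{2} - \binom{\lambda_x'}{2} = \binom{\mu_x'-\lambda_x'}{2} + (\mu_x' - \lambda_x')\lambda_x'$, after using $n(|\mu|-|\lambda|) = n \sum_{x \geq 1}(\mu_x'-\lambda_x')$.

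For Part 2, the plan is to set $\tilde\mu := \mu^{(D)} - D[\infty]$ and $\tilde\lambda := \lambda - D[n]$; for $D \leq \lambda_n$ these satisfy the hypotheses of Part 1 ($\tilde\mu \in \Y$ and $\tilde\lambda \in \Sig_n^{\geq 0}$), so the expression \eqref{eq:unstable_formula} equals
\[
\sqbinom{n}{\tilde\lambda}_t \prod_{x \geq 1} t^{(\tilde\mu_x' - \tilde\lambda_x')(n - \tilde\lambda_x')}(t^{1+\tilde\mu_x'-\tilde\lambda_x'};t)_{m_x(\tilde\lambda)}.
\]
The key observation is that under the substitution $y = x + D$ one has $\tilde\mu_x' = \mu_y'$, $\tilde\lambda_x' = \lambda_y'$, and $m_x(\tilde\lambda) = m_y(\lambda)$; moreover the $t$-multinomial $\sqbinom{n}{\tilde\lambda}_t$ equals $\sqbinom{n}{\lambda}_t$ since the multisets of multiplicities are merely translated. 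Hence \eqref{eq:unstable_formula} rewrites as $\sqbinom{n}{\lambda}_t \prod_{y \geq D+1} [\cdot]$. For $y \leq D < \lambda_n$ one has $\lambda_y' = n$ (killing the $t$-power) and $m_y(\lambda) = 0$ (killing the Pochhammer), so the missing factors are all $1$, establishing stabilization to the claimed value.

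The monotonicity follows from computing the ratio between consecutive values: $\eqref{eq:unstable_formula}_{D-1}/\eqref{eq:unstable_formula}_D$ equals the factor at $y = D$, which for $D \leq \lambda_n$ simplifies (using $\lambda_D' = n$) to $(t^{1+\mu_D'-n};t)_{m_D(\lambda)}$; this is a Pochhammer symbol taking values in $[0,1]$ (in the range where interlacing holds, i.e., $\mu_D' \geq n$), and equals $1$ precisely when $m_D(\lambda) = 0$, i.e., when $D < \lambda_n$. The main technical subtlety is the boundary case $D = \lambda_n$, where the product from Part 1 (which starts at $x = 1$, not $x = 0$) misses exactly the factor at $y = \lambda_n$; everything else is routine index-shifting.
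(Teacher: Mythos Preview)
Your proposal is correct and follows essentially the same route as the paper's own proof: Part~1 is obtained by combining \Cref{thm:skew_formula} with \Cref{thm:hl_principal_formulas} and simplifying the exponent of $t$ (exactly the computation the paper defers to ``the same proof as that of \Cref{thm:link_explicit}''), and Part~2 is the same shift $y=x+D$ in the product together with the observation that the factors for $y<\lambda_n$ are all equal to $1$. Your treatment is in fact a bit more careful than the paper's at the boundary $D=\lambda_n$; note that both your ratio computation and the paper's ``factors in $[0,1]$'' actually give monotone \emph{nonincrease} as $D$ decreases, so the word ``increases'' in the statement is a slip---what matters for the applications is that the sequence is eventually constant, which you establish.
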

\begin{proof}
\eqref{eq:stable_formula} follows from \Cref{thm:skew_formula} and \Cref{thm:hl_principal_formulas} by the same proof as that of \Cref{thm:link_explicit}, so let us show the monotonicity and stabilization statement. Substituting \eqref{eq:unstable_formula} into \eqref{eq:stable_formula} and changing variables $x \mapsto x+D$ in the product yields
\begin{multline*}
    \frac{P_{(\mu^{(D)} - D[\infty])/(\lambda - D[n])}(t^n,t^{n+1},\ldots)P_{(\lambda - D[n])}(1,\ldots,t^{n-1})}{P_{(\mu^{(D)}-D[\infty])}(1,t,\ldots)}  = \sqbinom{n}{\lambda}_t \prod_{x \in \Z_{>D}} t^{(\mu'_x - \lambda'_x)(n-\lambda'_x)} (t^{1+\mu'_x-\lambda'_x};t)_{m_x(\lambda)}.
\end{multline*}
The factors in the product are all in $[0,1]$ and are equal to $1$ when $x \leq \lambda_n$, and since the product is over $x \in \Z_{>D}$ this completes the proof.
\end{proof}

\begin{rmk}\label{rmk:boundary_links_infty}
Given the translation-invariance of the links $L^m_n$ noted in \Cref{rmk:trans_invariant_link}, when $\mu \in \Y+D$ it is natural to view the expression 
\[
\frac{P_{(\mu - D[\infty])/(\lambda - D[n])}(t^n,t^{n+1},\ldots)P_{(\lambda - D[n])}(1,\ldots,t^{n-1})}{P_{(\mu-D[\infty])}(1,t,\ldots)}
\]
as simply 
\[
\frac{P_{\mu/\lambda}(t^n,t^{n+1},\ldots)P_{\lambda}(1,\ldots,t^{n-1})}{P_{\mu}(1,t,\ldots)},
\]
even though in our setup the expressions $P_{\mu/\lambda}(t^n,t^{n+1},\ldots)$ and $P_{\mu}(1,t,\ldots)$ are not well-defined when $\mu$ is not in $\Y$. Hence in view of \Cref{thm:skew_formula} it is natural to view the coherent systems $(M^\mu_n)_{n \geq 1}$ of \Cref{thm:boundary} as being given by links `at infinity'
\[
M^\mu_n(\lambda) \text{``}=\text{''} L^\infty_n(\mu,\lambda) = \frac{P_{\mu/\lambda}(t^n,t^{n+1},\ldots)P_{\lambda}(1,\ldots,t^{n-1})}{P_{\mu}(1,t,\ldots)}
\]
for general $\mu \in \Sig_\infty$, though we must take a slightly roundabout path to make rigorous sense of the RHS. Many of the proofs below follow the same pattern of proving a result for $\mu \in \Y$ by usual symmetric functions machinery, appealing to translation-invariance for $\mu \in \Y+D$, and then approximating $\mu \in \Sig_\infty^{unstable}$ by elements $\mu^{(D)} \in \Y+D$ and using \Cref{thm:measure_and_monotone} to apply the monotone convergence theorem. We note also that the formula \eqref{eq:explicit_measures} is clearly translation-invariant.
\end{rmk}

\begin{proof}[Proof of {\Cref{thm:find_boundary}}]
We first show $M_n^\mu$ is indeed a probability measure. Clearly it is a nonnegative function on $\Sig_n$, but we must show it sums to $1$. When $\mu \in \Y$ this is by \Cref{thm:measure_and_monotone} and the definition of skew HL polynomials, and the case $\mu \in \Y+D$ reduces to this one. Hence it remains to show that for $\mu \in \Sig_\infty^{unstable}$,
\begin{equation}\label{eq:is_prob_measure}
    \sum_{\lambda \in \Sig_n} \lim_{D \to -\infty} \frac{P_{(\mu^{(D)} - D[\infty])/(\lambda - D[n])}(t^n,t^{n+1},\ldots)P_{(\lambda - D[n])}(1,\ldots,t^{n-1})}{P_{(\mu^{(D)}-D[\infty])}(1,t,\ldots)} = 1.
\end{equation}
By \Cref{thm:measure_and_monotone}, the functions
\begin{multline*}
\frac{P_{(\mu^{(D)} - D[\infty])/(\lambda - D[n])}(t^n,t^{n+1},\ldots)P_{(\lambda - D[n])}(1,\ldots,t^{n-1})}{P_{(\mu^{(D)}-D[\infty])}(1,t,\ldots)}  = \sqbinom{n}{\lambda}_t \prod_{x \in \Z_{>D} } t^{(\mu'_x - \lambda'_x)(n-\lambda'_x)} (t^{1+\mu'_x-\lambda'_x};t)_{m_x(\lambda)}
\end{multline*}
converge to the summand in \eqref{eq:is_prob_measure} from below as $D \to -\infty$. Hence \eqref{eq:is_prob_measure} follows by the monotone convergence theorem.

For $\mu \in \Y+D$ for some $D$, coherency again follows from the definition of skew functions and the first part of \Cref{thm:measure_and_monotone}. For $\mu \in \Sig_\infty^{unstable}$ we must show 
\begin{multline}\label{eq:coherency_wts}
    \sum_{\kappa \in \Sig_{n+1}} \lim_{D \to -\infty} \frac{P_{(\mu^{(D)} - D[\infty])/(\kappa - D[n+1])}(t^{n+1},\ldots)P_{(\kappa - D[n+1])}(1,\ldots,t^{n})}{P_{(\mu^{(D)}-D[\infty])}(1,t,\ldots)} \frac{P_{\kappa/\lambda}(t^n)P_\lambda(1,\ldots,t^{n-1})}{P_\kappa(1,\ldots,t^n)} \\
    = \lim_{D \to -\infty} \frac{P_{(\mu^{(D)} - D[\infty])/(\lambda - D[n])}(t^n,t^{n+1},\ldots)P_{(\lambda - D[n])}(1,\ldots,t^{n-1})}{P_{(\mu^{(D)}-D[\infty])}(1,t,\ldots)}.
\end{multline}
Again the monotone convergence theorem allows us to interchange the limit and sum. The result then follows by translation invariance of the links \eqref{eq:trans_invariant_link} and the definition of skew HL polynomials.
\end{proof}

It remains to show that the coherent systems identified in \Cref{thm:find_boundary} are extreme and that all extreme coherent systems are of this form. Just from the definition, an arbitrary extreme coherent system is an elusive object. Luckily, the general results of the Vershik-Kerov ergodic method guarantee that extreme coherent systems can be obtained through limits of cotransition probabilities for certain \emph{regular sequences} of signatures, which are much more concrete.

\begin{defi}\label{def:regular}
A sequence $(\mu(n))_{n \geq 1}$ with $\mu(n) \in \Sig_n$ is \emph{regular} if for every $k \in \Z_{\geq 1}$ and $ \lambda \in \Sig_k$, the limit 
\[
M_k(\lambda) := \lim_{n \to \infty} L^n_k(\mu(n),\lambda)
\]
exists and $M_k$ is a probability measure.
\end{defi}

\begin{prop}\label{thm:approximation}
For any extreme coherent system $(M_k)_{k \geq 1} \in \partial G$ there exists a regular sequence $(\mu(n))_{n \geq 1}$ such that 
\[
M_k(\cdot) = \lim_{n \to \infty} L_k^n(\mu(n),\cdot).
\]
\end{prop}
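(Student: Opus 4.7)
The plan is to apply the standard Vershik-Kerov ergodic method, identifying extreme coherent systems with limits along paths in the branching graph via a reverse martingale argument.

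First, I would use Kolmogorov's extension theorem to build a probability measure $M_\infty$ on the path space $T = \prod_{n \geq 1} \Sig_n$, supported on paths $(\mu(n))_{n \geq 1}$ with $L_{n}^{n+1}(\mu(n+1),\mu(n))>0$, such that the marginal of $\mu(n)$ is $M_n$ and for $k \leq n$ the conditional law of $\mu(k)$ given $\mu(n)=\mu$ is $L_k^n(\mu,\cdot)$. Consistency of this system of finite-dimensional distributions is precisely the coherency condition combined with $L_k^n = L_{n-1}^n L_{n-2}^{n-1} \cdots L_k^{k+1}$.

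Next, fix $k \geq 1$ and $\lambda \in \Sig_k$, and consider the decreasing filtration $\mathcal{F}_n := \sigma(\mu(n), \mu(n+1),\ldots)$. The Markov structure of $M_\infty$ gives
\[
\E[\mathbbold{1}_{\mu(k)=\lambda} \mid \mathcal{F}_n] = L_k^n(\mu(n),\lambda) \qquad M_\infty\text{-a.s.},
\]
so $n \mapsto L_k^n(\mu(n),\lambda)$ is a bounded reverse martingale. By Doob's reverse martingale convergence theorem, it converges $M_\infty$-a.s.\ and in $L^1$ to $\widetilde{M}_k^{\infty}(\lambda) := \E[\mathbbold{1}_{\mu(k)=\lambda} \mid \mathcal{F}_\infty]$, where $\mathcal{F}_\infty = \bigcap_n \mathcal{F}_n$ is the tail $\sigma$-algebra. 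Since $\Sig_k$ is countable I can take a single $M_\infty$-full measure set on which this limit exists simultaneously for all $(k,\lambda)$; on this set the limits $\widetilde{M}_k^\infty(\cdot)$ are coherent, since coherency of the prelimit sequences $L_k^n(\mu(n),\cdot)$ is automatic from the identity $L_k^{n} = L_k^{n+1} L_{n}^{n+1}$ applied before passing to the limit (the sum involved has only countably many terms and the bounded convergence theorem applies).

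I would then verify that each $\widetilde{M}_k^\infty$ is a genuine probability measure on $\Sig_k$, not merely a sub-probability. Fatou gives $\sum_\lambda \widetilde{M}_k^\infty(\lambda) \leq 1$ pointwise, while Tonelli combined with $L^1$-convergence yields
\[
\E_{M_\infty}\!\Bigl[\sum_{\lambda \in \Sig_k} \widetilde{M}_k^\infty(\lambda)\Bigr] = \sum_{\lambda \in \Sig_k} M_k(\lambda) = 1,
\]
forcing $\sum_\lambda \widetilde{M}_k^\infty(\lambda) = 1$ almost surely. Taking expectations in the convergence also gives the barycentric decomposition $M_k(\lambda) = \E_{M_\infty}[\widetilde{M}_k^\infty(\lambda)]$, expressing $(M_k)_{k\geq 1}$ as a mixture (over paths) of the coherent systems $(\widetilde{M}_k^\infty)_{k \geq 1}$. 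Extremality of $(M_k)_{k\geq 1}$ then forces $\widetilde{M}_k^\infty = M_k$ for all $k$ on a set of full $M_\infty$-measure, and choosing any path $(\mu(n))_{n \geq 1}$ in this set produces a regular sequence with the desired convergence property.

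The main potential obstacle is controlling the non-compactness of $\Sig_k$: a priori mass could escape either to $+\infty$ or $-\infty$ in the limit. The argument above handles this via the martingale/Fubini comparison rather than any direct tightness estimate, so nothing further is needed; all other steps are routine applications of measure-theoretic probability, and none require the explicit formulas of Section~\ref{sec:skew_formulas}. Those explicit formulas will only enter later, when one wants to identify the regular sequences concretely with elements of $\Sig_\infty$.
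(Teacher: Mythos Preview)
Your argument is correct and is precisely the standard reverse-martingale proof of the Vershik--Kerov ergodic method. The paper, by contrast, does not reproduce this argument at all: its entire proof is the one-line citation ``Follows from \cite[Theorem 6.1]{okounkov1998asymptotics},'' which is exactly the general result you have just re-derived. So the approaches coincide in content but differ in presentation---you prove what the paper cites. A minor quibble: the identity you invoke for coherency of the limits should read $L_k^n = L_{k}^{k+1} L_{k+1}^{n}$ (chain rule for the links), not $L_k^{n} = L_k^{n+1} L_{n}^{n+1}$ as written; and passing to the limit in the resulting sum over $\Sig_{k+1}$ is cleanest via Scheff\'e's lemma once you have already established that $\widetilde{M}_{k+1}^\infty$ is a probability measure, rather than via bounded convergence directly. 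Your handling of the extremality step (splitting the path space by a tail event) and of the mass-escape issue is fine.
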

\begin{proof}
Follows from {\cite[Theorem 6.1]{okounkov1998asymptotics}}.
\end{proof}

The space of extreme coherent systems obtained from regular sequences as in \Cref{thm:approximation} is sometimes referred to as the \emph{Martin boundary}. It naturally includes into the boundary, and \Cref{thm:approximation} says that in this setup they are in fact equal. 

\begin{lem}\label{thm:conv_means_regular}
Let $(\mu(n))_{n \geq 1}$ be a sequence with $\mu(n) \in \Sig_n$, such that 
\[
\lim_{n \to \infty} \mu(n)_i =: \mu_i
\]
exists and is finite for every $i$. Then $(\mu(n))_{n \geq 1}$ is regular and the corresponding coherent family is $(M_n^\mu)_{n \geq 1}$, where $\mu = (\mu_1,\mu_2,\ldots) \in \Sig_\infty$.
\end{lem}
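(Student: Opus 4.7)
The plan is to compute $\lim_{n \to \infty} L^n_k(\mu(n), \lambda)$ explicitly using the closed form in \Cref{thm:link_explicit} and match it term-by-term with the formula \eqref{eq:explicit_measures} for $M_k^\mu(\lambda)$; since the measures $M_k^\mu$ form a coherent system by \Cref{thm:find_boundary}, this establishes both regularity and the identification of the limit. Because $\mu(n)_i$ is an integer converging to the integer $\mu_i$, in fact $\mu(n)_i = \mu_i$ for all $n$ sufficiently large (at each fixed $i$); hence $\mu(n)_x' \to \mu_x'$ in the extended sense (allowing $+\infty$) for every $x \in \Z$, and outside a bounded window of $x$ every factor in either product is trivially $1$, uniformly for large $n$. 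By translation invariance (\Cref{rmk:trans_invariant_link} and direct inspection of \eqref{eq:explicit_measures}) we may assume $\mu \in \Y \sqcup \Sig_\infty^{unstable}$, so that $\mu_x' = \infty$ can occur only for $\mu \in \Y$ and $x \leq 0$.

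The prefactor $1/\sqbinom{n}{n-k}_t \to (t;t)_k$, which combined with the $\prod_x 1/(t;t)_{m_x(\lambda)}$ factor in \Cref{thm:link_explicit} produces exactly $\sqbinom{k}{\lambda}_t$. For each $x$ in the relevant window with $\mu_x' < \infty$, the quantities $\mu(n)_x'$ and $m_x(\mu(n))$ are eventually constant, the only $n$-dependent argument $t^{1+n-k-\mu_x'+\lambda_{x+1}'}$ of the $\qhypm$ tends to $0$, and the $\qhypm$ therefore degenerates to a $\qhypl$ which is evaluated by the $q$-Gauss identity \eqref{eq:from_koepf}, exactly as in the derivation of \eqref{eq:apply_koepf} inside the proof of \Cref{thm:skew_formula}; the result matches the factor $t^{(\mu_x' - \lambda_x')(k - \lambda_x')}(t^{1+\mu_x'-\lambda_x'};t)_{m_x(\lambda)}$ appearing in \eqref{eq:explicit_measures}.

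The remaining case is $\mu \in \Y$ and $x \leq 0$ (so $\mu_x' = \infty$), and it splits in two. If $\lambda_x' < k$, then the $M_k^\mu$ factor vanishes and the prefactor $t^{(k-\lambda_x')(\mu(n)_x' - \lambda_x')}$ in \Cref{thm:link_explicit} decays to $0$ as $\mu(n)_x'\to\infty$. If $\lambda_x' = k$, then the $M_k^\mu$ factor equals $1$ and one must show $t^{m_x(\lambda)m_x(\mu(n))} \qhypm(\cdots) \to 1$; this is essentially the same degenerate limit as for \eqref{eq:qhyp_convergence_degenerate} in the proof of \Cref{thm:skew_formula}, where only the top-index $j = m_x(\lambda)$ term of the sum survives after multiplication by $t^{m_x(\lambda)m_x(\mu(n))}$ (using the asymptotic $(t^{-N};t)_{m_x(\lambda)} \sim (-1)^{m_x(\lambda)}t^{-m_x(\lambda)N + m_x(\lambda)(m_x(\lambda)-1)/2}$ as $N \to \infty$), and direct evaluation gives $1$. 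The main obstacle here is that, in contrast to the proof of \Cref{thm:skew_formula}, the other $n$-dependent $\qhypm$ argument $t^{1+n-k-\mu(n)_x'+\lambda_{x+1}'}$ need not tend to $0$ (its behavior depends on how $n - \mu(n)_x'$ grows, which is unconstrained by the hypothesis $\mu(n)_i \to \mu_i$); fortunately in the surviving $j = m_x(\lambda)$ term this argument appears only through a length-zero Pochhammer symbol, so the limit is insensitive to it.
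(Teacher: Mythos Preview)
Your approach is the same as the paper's: both invoke the explicit formula of \Cref{thm:link_explicit} and reduce to a factor-by-factor limit handled by the $q$-hypergeometric computations from the proof of \Cref{thm:skew_formula}. You supply considerably more detail than the paper's two-line sketch, and you correctly identify a subtlety (the uncontrolled behavior of $t^{1+n-k-\mu(n)_x'+\lambda_{x+1}'}$) that the paper glosses over entirely.

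There is, however, a small gap in your handling of the subcase $\lambda_x'=k$, $\mu_x'=\infty$. Your argument that only the $j=m_x(\lambda)$ term survives uses $m_x(\mu(n))\to\infty$. This is fine when $x=\lambda_k=0$, since then $\mu(n)_{x+1}'\to\mu_1'<\infty$; but if $\lambda_k<0$ then $x+1\le 0$ too, so both $\mu(n)_x'$ and $\mu(n)_{x+1}'$ diverge and their difference $m_x(\mu(n))$ can stay bounded or oscillate. In that situation the $x$-th factor need not converge to $1$: for instance with $\mu=(0,0,\ldots)\in\Y$, $k=1$, $\lambda=(-1)$, and $\mu(n)=(0^{n-2},-1,-1)$, the $x=-1$ factor equals $1+t$ for every $n$. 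The repair is immediate and does not require that factor to converge at all: $\lambda_k<0$ forces $\lambda_0'<k$ while $\mu_0'=\infty$, so the $x=0$ factor (your first subcase) already sends the whole product to $0=M_k^\mu(\lambda)$; the remaining factors need only be bounded, which is exactly the estimate \eqref{eq:qhyp_to_bound}. Thus in the $\lambda_k<0$ case you should replace the factor-by-factor convergence by the cruder observation ``one factor $\to 0$, the rest bounded.''
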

\begin{proof}
Let $(\mu(n))_{n \geq 1}$ satisfy the hypothesis. We must show for arbitrary $k,\lambda \in \Sig_k$ that 
\begin{equation}\label{eq:reg_wts}
    \lim_{n \to \infty} L_k^n(\mu(n),\lambda) = M^\mu_k(\lambda).
\end{equation}
It is easy to see from the explicit formula in \Cref{thm:link_explicit} that $L_k^n(\mu(n),\lambda)$ depends only on the parts of $\mu(n)$ which are $\geq \lambda_k$. For any fixed $x$, it is easy to see that $\mu(n)_x' \to \mu_x'$. In fact, for all sufficiently large $n$, it must be true that $\mu(n)_x' = \mu_x'$ for all $x \geq \lambda_k$ such that $\mu_x'$ is finite. Hence for all sufficiently large $n$ the product in \eqref{eq:link_explicit} only has nontrivial terms when $\lambda_k \leq x \leq \mu_1$, so it suffices to show that each term converges. This follows by the exact same argument as the proof of \Cref{thm:skew_formula}, again with two cases based on whether $\mu(n)_x'$ stabilizes or $\mu(n)_x' \to \infty$. 
\end{proof}

\begin{lem}\label{thm:measure_vanishing}
Let $1 \leq k < n$ be integers and $\lambda \in \Sig_k$.
\begin{enumerate}
    \item If $\mu \in \Sig_n$ is such that $\lambda'_x > \mu'_x$ for some $x$, then $L_k^n(\mu,\lambda)=0$.
    \item If $\mu \in \Sig_\infty$ is such that $\lambda'_x > \mu'_x$ for some $x$, then $M_k^\mu(\lambda) = 0$.
\end{enumerate}
\end{lem}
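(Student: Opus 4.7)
The plan is to handle the two parts separately: part (1) follows from the branching rule for $P_{\mu/\lambda}$, and part (2) from identifying a vanishing factor in the explicit product formula of \Cref{thm:find_boundary}.

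For (1), I would invoke \Cref{thm:branching_formulas}, extended to general signatures via \Cref{def:skew_fns}, which expresses $P_{\mu/\lambda}(x_1,\ldots,x_{n-k})$ as a sum over interlacing chains $\lambda = \lambda^{(0)} \prec_P \lambda^{(1)} \prec_P \cdots \prec_P \lambda^{(n-k)} = \mu$ with coefficients $\prod_i \psi_{\lambda^{(i+1)}/\lambda^{(i)}}$, each of which is nonnegative (in fact positive) for $t \in (0,1)$ by \Cref{def:psi_varphi_coefs}. A single interlacing $\nu \prec_P \kappa$ forces $\kappa_j \geq \nu_j$ for every $j$, and by counting indices $j$ with $\nu_j \geq x$ this gives $\nu'_x \leq \kappa'_x$ for every $x$. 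Iterating along the chain yields $\lambda'_x \leq \mu'_x$ whenever some chain exists. Under the hypothesis $\lambda'_x > \mu'_x$ for some $x$, no chain contributes, so $P_{\mu/\lambda} \equiv 0$ and $L_k^n(\mu,\lambda) = 0$.

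For (2), I would apply the explicit formula
\[
M^\mu_k(\lambda) = \sqbinom{k}{\lambda}_t \prod_{x \in \Z} t^{(\mu'_x - \lambda'_x)(k-\lambda'_x)} (t^{1+\mu'_x-\lambda'_x};t)_{m_x(\lambda)}
\]
from \Cref{thm:find_boundary}. Since $\lambda$ has finitely many parts, $\lambda'_x = 0 \leq \mu'_x$ for all $x > \lambda_1$, so the set $\{x : \lambda'_x > \mu'_x\}$ is bounded above; let $x_0$ be its maximum. By maximality $\lambda'_{x_0+1} \leq \mu'_{x_0+1}$, and by monotonicity of $\mu'$ we have $\mu'_{x_0+1} \leq \mu'_{x_0}$, hence
\[
1 \leq \lambda'_{x_0} - \mu'_{x_0} \leq \lambda'_{x_0} - \lambda'_{x_0+1} = m_{x_0}(\lambda).
\]
The factor at $x = x_0$ is
\[
(t^{1+\mu'_{x_0}-\lambda'_{x_0}};t)_{m_{x_0}(\lambda)} = \prod_{i=0}^{m_{x_0}(\lambda)-1}\bigl(1-t^{1+\mu'_{x_0}-\lambda'_{x_0}+i}\bigr),
\]
which contains the zero $1 - t^0$ at the admissible index $i = \lambda'_{x_0} - \mu'_{x_0} - 1 \in \{0,\ldots,m_{x_0}(\lambda)-1\}$. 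Hence the product vanishes and $M^\mu_k(\lambda) = 0$.

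Neither step presents any real obstacle. The only minor bookkeeping is (i) that the branching-rule extension to signatures with possibly negative parts via \Cref{def:skew_fns} retains positivity of the $\psi$-coefficients (immediate from \Cref{def:psi_varphi_coefs}), and (ii) that the maximum $x_0$ in part (2) is well-defined, which uses only that $\lambda$ is of finite length. Note also that part (2) covers $\mu \in \Sig_\infty^{unstable}$ uniformly with $\mu \in \bigsqcup_D (\Y + D)$, since the hypothesis $\lambda'_x > \mu'_x$ forces $\mu'_x$ to be finite at the relevant index $x_0$.
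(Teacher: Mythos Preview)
Your proof is correct. For part~(1) you do essentially what the paper does: both arguments reduce to the vanishing of $P_{\mu/\lambda}$ via the branching rule (the paper cites upper-triangularity from \cite{mac}, you spell out the interlacing-chain argument via \Cref{thm:branching_formulas} and \Cref{def:skew_fns}).

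For part~(2) the routes diverge. The paper approximates $\mu \in \Sig_\infty$ by its finite truncations $(\mu_1,\ldots,\mu_n) \in \Sig_n$, applies part~(1) to get $L_k^n((\mu_1,\ldots,\mu_n),\lambda)=0$ for all large $n$, and then invokes \Cref{thm:conv_means_regular} to pass to the limit $M_k^\mu(\lambda)$. You instead locate a vanishing Pochhammer factor directly in the explicit product \eqref{eq:explicit_measures}. The paper in fact remarks immediately after its proof that your approach is available: ``\Cref{thm:measure_vanishing} could also be shown by the explicit formula \eqref{eq:explicit_measures}, but as the above proof shows, it in fact requires only the very basic properties of symmetric functions.'' So the trade-off is exactly as stated there: your argument is self-contained once \Cref{thm:find_boundary} is in hand and avoids the auxiliary convergence lemma, while the paper's argument is more conceptual and would survive in settings where no such closed formula exists.
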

\begin{proof}
If $\mu \in \Y,\lambda \in \Y$ and $\lambda'_x > \mu'_x$ for any $x$, it follows from the upper-triangularity of the branching rule \cite[Chapter III, (5.5')]{mac} that $P_{\lambda/\mu}(t^k,\ldots,t^{n-1}) = 0$, showing (1). Approximating $\mu \in \Sig_\infty$ with $(\mu_1,\ldots,\mu_n) \in \Sig_n$ and invoking \Cref{thm:conv_means_regular} yields (2).
\end{proof}

\Cref{thm:measure_vanishing} could also be shown by the explicit formula \eqref{eq:explicit_measures}, but as the above proof shows, it in fact requires only the very basic properties of symmetric functions. 

\begin{prop}\label{thm:all_boundary1}
Every extreme coherent system is given by $(M^\mu_n)_{n \geq 1}$ for some $\mu \in \Sig_\infty$.
\end{prop}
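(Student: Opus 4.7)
The strategy is to combine \Cref{thm:approximation} with \Cref{thm:conv_means_regular}. By \Cref{thm:approximation}, any extreme coherent system $(M_k)_{k \geq 1}$ arises as the pointwise limit $M_k(\lambda) = \lim_n L_k^n(\mu(n), \lambda)$ along some regular sequence $(\mu(n))_{n \geq 1}$ with $\mu(n) \in \Sig_n$. My plan is to show that, after passing to a subsequence, $\mu(n)_i \to \mu_i \in \Z$ exists for every $i \geq 1$; the resulting tuple $\mu = (\mu_i)_{i \geq 1}$ is then weakly decreasing and thus an element of $\Sig_\infty$, at which point \Cref{thm:conv_means_regular} immediately identifies $(M_k)_{k \geq 1}$ with $(M_k^\mu)_{k \geq 1}$.

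The core step is establishing, for each fixed $i$, that both $\liminf_n \mu(n)_i > -\infty$ and $\limsup_n \mu(n)_i < +\infty$. For the lower bound I would argue by contradiction: if $\mu(n)_i \to -\infty$ along some subsequence (which remains regular), then for any fixed $\lambda \in \Sig_i$ eventually $\mu(n)_i < \lambda_i$, so that $\mu(n)'_{\lambda_i} < i = \lambda'_{\lambda_i}$, and \Cref{thm:measure_vanishing}(1) forces $L_i^n(\mu(n), \lambda) = 0$. Since $\lambda \in \Sig_i$ was arbitrary, this gives $M_i \equiv 0$, contradicting the fact that $M_i$ is a probability measure.

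For the upper bound, it suffices by the weak decrease $\mu(n)_i \leq \mu(n)_1$ to rule out the case $\mu(n)_1 \to +\infty$. Assume this holds along a subsequence. Using the translation invariance $L_1^n(\mu(n), (c)) = L_1^n(\mu(n) - c[n], (0))$ from \Cref{rmk:trans_invariant_link}, it is enough to handle $\lambda = (0)$. Plugging $\lambda = (0)$ into \Cref{thm:link_explicit}, the product over $x \in \Z$ contributes $1$ at all $x \leq -1$, while the $x = 0$ factor together with $1/\sqbinom{n}{n-1}_t$ remains uniformly bounded in $n$, and the product over $x \geq 1$ equals $\prod_{x \geq 1} t^{\mu(n)'_x} = t^{\sum_i \max(\mu(n)_i,\, 0)}$, whose exponent diverges as $\mu(n)_1 \to +\infty$. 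Hence $L_1^n(\mu(n), (c)) \to 0$ for every $c \in \Z$, so $M_1 \equiv 0$, contradicting regularity.

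Having verified both bounds, a standard diagonal-subsequence argument yields a further subsequence along which $\mu(n)_i \to \mu_i \in \Z$ for every $i$, and \Cref{thm:conv_means_regular} concludes the proof. The main technical point will be the upper-bound estimate: one has to verify from the explicit formula \Cref{thm:link_explicit} that the $\qhypm$ factor and the remaining $t$-Pochhammer contributions do not blow up in a manner that compensates for the decay of $t^{\sum_i \max(\mu(n)_i,\, 0)}$; the other steps are routine diagonalization together with appeals to the results already established.
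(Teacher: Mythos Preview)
Your proposal is correct and follows essentially the same route as the paper: both use \Cref{thm:approximation} to obtain a regular sequence, establish lower bounds on each $\mu(n)_k$ via \Cref{thm:measure_vanishing}, establish an upper bound on $\mu(n)_1$ by showing the links decay when the first part escapes to $+\infty$, and then diagonalize and invoke \Cref{thm:conv_means_regular}. The only difference is that for the upper bound the paper bounds $L^n_k(\mu(n),\lambda)$ for a general level $k$ by controlling the $\qhypm$ factors uniformly (their $C_k$ bound), whereas you work at level $k=1$ with $\lambda=(0)$ and compute the single nontrivial $\qhypm$ term explicitly; since $\mu(n)_i \leq \mu(n)_1$, the $k=1$ case suffices and your computation is a bit more concrete. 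One small wording issue: a subsequence of a regular sequence is not itself ``regular'' in the sense of \Cref{def:regular}, but this is harmless since regularity of the full sequence guarantees the limit exists and hence agrees with any subsequential limit, which is what you actually use.
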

\begin{proof}
Let $(M_n)_{n \geq 1}$ be an extreme coherent system and $(\mu(n))_{n \geq 1}$ be a regular sequence converging to it, the existence of which is guaranteed by \Cref{thm:approximation}. We wish to find $\mu \in \Sig_\infty$ such that 
\begin{equation}\label{eq:wts_allboundary1}
    \lim_{n \to \infty} L^n_k(\mu(n),\lambda) = M^\mu_k(\lambda)
\end{equation}
for all $k$ and $\lambda \in \Sig_k$, and will construct $\mu$ as a limit of the signatures $\mu(n)$.

Our first step is to show the sequence of first parts $(\mu_1(n))_{n \geq 1}$ is bounded above (and hence all other $(\mu_i(n))_{n \geq 1}$ are as well). Suppose for the sake of contradiction that this is not the case. Then there is a subsequence $(\mu_1(n_j))_{j \geq 1}$ of $(\mu_1(n))_{n \geq 1}$ for which $\mu_1(n_j) \to \infty$. We claim that for any $k$ and $\lambda \in \Sig_k$, 
\begin{equation}\label{eq:tightness_fails}
    \lim_{j \to \infty} L^{n_j}_k(\mu^(n_j),\lambda) = 0.
\end{equation}
This suffices for the contradiction, as then \eqref{eq:tightness_fails} holds also with $n_j$ replaced by $n$ by regularity of $(\mu(n))_{n \geq 1}$, therefore the sequence of probability measures $L^{n}_k(\mu(n),\cdot)$ converges to the zero measure, which contradicts the definition of regular sequence. So let us prove \eqref{eq:tightness_fails}, and to declutter notation let us without loss of generality denote the subsequence by $(\mu(n))_{n \geq 1}$ as well.

We claim there exists a constant $C_k$ such that for all $J \geq 1$ and $\nu \in \Sig_{k+J}$, 
\begin{equation}\label{eq:qhyp_to_bound}
\left| t^{m_x(\nu)m_x(\lambda)}\qhypm
     \left(\begin{matrix} 
t^{-m_x(\lambda)};t^{-m_x(\nu)},0 \\ 
t^{1+\nu_{x+1}'-\lambda_x'},t^{1+J-\nu_x'+\lambda_{x+1}'}\end{matrix} 
; t,t \right) \right| \leq C_k
\end{equation}
For fixed $\lambda$, $1+\nu_{x+1}'-\lambda_x'$ and $1+J-\nu_x'+\lambda_{x+1}'$ are both bounded below independent of $\nu$ by $1-k$. This gives an upper bound on the factors $(bt^\ell;t)_{m_x(\lambda)-\ell}, 0 \leq \ell \leq m_x(\lambda)$ where $b \in \{t^{1+\nu_{x+1}'-\lambda_x'},t^{1+J-\nu_x'+\lambda_{x+1}'}\}$ which appear in the sum expansion \eqref{eq:qhyp_def} of \eqref{eq:qhyp_to_bound}. The term $t^{m_x(\nu)m_x(\lambda)}(t^{-m_x(\nu)};t)_{\ell}$ is likewise bounded above independent of $\nu$. Because $m_x(\lambda)$ and $\lambda_x'$ can only take finitely many values, the claim follows. Furthermore, the LHS of \eqref{eq:qhyp_to_bound} is simply $1$ whenever $m_x(\lambda)=0$, which is true for all but finitely many $x$. Plugging this bound into \Cref{thm:link_explicit} yields 
\begin{equation}\label{eq:easy_to_bound}
    L^n_k(\mu(n),\lambda) \leq \frac{C_k^k}{\sqbinom{n}{k} \prod_{i \in \Z} (t;t)_{m_i(\lambda)}} \prod_{x \in \Z} t^{(k-\lambda_x')(\mu(n)_x'-\lambda_x')}.
\end{equation}
For $\lambda_1 < x \leq \mu(n)_1$, one has $t^{(k-\lambda_x')(\mu(n)_x'-\lambda_x')} \leq t^k < 1$, and our claim \eqref{eq:tightness_fails} follows.

Now, suppose for the sake of contradiction that there exists $k$ for which $(\mu(n)_k)_{n \geq 1}$ is not bounded below. Then for any $\lambda \in \Sig_k$, there are infinitely many $n$ for which $\mu(n)_k < \lambda_k$ and consequently $\mu(n)_x' < \lambda_x'=k$ for $x=\lambda_k$. By \Cref{thm:measure_vanishing}, $L^n_k(\mu(n),\lambda)=0$ for all such $n$, therefore $L^n_k(\mu(n),\lambda) \to 0$ as $n \to \infty$ since $(\mu(n))_{n \geq 1}$ is a regular sequence. This is a contradiction, therefore $(\mu(n)_k)_{n \geq 1}$ is bounded below for each $k$. 

Since $(\mu(n)_k)_{n \geq 1}$ is bounded above and below for each $k$, there is a subsequence on which these converge, and by a diagonalization argument there exists a subsequence $(\mu(n_j))_{j \geq 1}$ on which $\mu(n_j)_k$ converges for every $k$. Letting $\mu_i = \lim_{j \to \infty} \mu(n_j)_i$ and $\mu = (\mu_1,\mu_2,\ldots) \in \Sig_\infty$, we have by \Cref{thm:conv_means_regular} that 
\begin{equation*}
    \lim_{j \to \infty} L_k^{n_j}(\mu(n_j),\lambda) = M^\mu_k(\lambda)
\end{equation*}
for each $\lambda \in \Sig_k$. Since $\lim_{n \to \infty} L_k^n(\mu(n),\lambda)$ exists by the definition of regular sequence, it must also be equal to $M^\mu_k(\lambda)$. This shows \eqref{eq:wts_allboundary1}, completing the proof.
\end{proof}

For the other direction, \Cref{thm:all_boundary2}, we will need the basic fact that general coherent systems are convex combinations of extreme ones. 

\begin{prop}\label{thm:extreme_combinations}
For any coherent system $(M_n)_{n \geq 1}$ on $\G$, there exists a Borel\footnote{The topology on $\partial \G$ here is the following. For each $n$, the set of probability measures on $\Sig_n$ inherits a topology from the product topology on $\R^\infty$ by viewing the measures as functions, which gives a topology on the inverse limit $\partial \G$.} measure $\pi$ on $\partial \G$ such that 
\[
M_k = \int_{M' \in \partial \G} M_k' \pi(dM')
\]
for each $k$, where $M'$ is shorthand for a coherent system $(M'_n)_{n \geq 1}$.
\end{prop}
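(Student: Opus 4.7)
The plan is to apply Choquet's theorem to the convex set $\mathcal{C}$ of coherent systems on $\G$, whose extreme points are by definition $\partial \G$. To do this, I realize $\mathcal{C}$ as a metrizable compact convex subset of a locally convex Hausdorff topological vector space. Concretely, I embed $\mathcal{C}$ into $\prod_{n \geq 1} \R^{\Sig_n}$ equipped with the product of pointwise-convergence topologies; this is a locally convex Hausdorff topological vector space, it is metrizable since each $\Sig_n$ is countable, and convexity of $\mathcal{C}$ is immediate. Each stochastic link $L^{n+1}_n$ acts as a continuous linear map in this topology (because the rows are summable), so the coherency conditions $L^{n+1}_n M_{n+1} = M_n$ cut out a closed subset of $\prod_n \cM(\Sig_n)$.

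The nontrivial point is compactness: I must check that any limit of coherent systems in the product topology remains a coherent system whose levels are genuine probability measures, i.e., that no mass escapes to infinity. This is a standard feature of branching graphs with stochastic links and can be extracted from the general theory in \cite[Chapter 7]{borodin2017representations}. Alternatively, I would argue directly by using the explicit formula of \Cref{thm:link_explicit} (together with the bound \eqref{eq:easy_to_bound} already established in the proof of \Cref{thm:all_boundary1}) to control the tail of $L^n_1(\mu,\cdot)$ uniformly in $n$, which gives tightness of the level-$1$ marginal; coherency $M_k = L^n_k M_n$ applied with $k$ growing then propagates this tightness to every level.

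Once $\mathcal{C}$ is realized as a metrizable compact convex subset of a locally convex Hausdorff topological vector space, Choquet's theorem produces, for each $(M_n) \in \mathcal{C}$, a Borel probability measure $\pi$ on the extreme boundary $\partial \mathcal{C} = \partial \G$ whose barycenter is $(M_n)$. Because each evaluation map $M' \mapsto M'_k(\lambda)$ is a continuous linear functional in the product topology, the barycenter relation unfolds coordinatewise into the stated integral formula $M_k(\lambda) = \int_{\partial \G} M'_k(\lambda)\, \pi(dM')$ for every $k$ and every $\lambda \in \Sig_k$. The main obstacle is the compactness/tightness step above; once it is secured, the remainder is a routine invocation of Choquet's theorem, and in the present exposition I would simply cite \cite[Chapter 7]{borodin2017representations} for the whole compact-simplex package rather than redo it from scratch.
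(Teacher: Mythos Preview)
The paper's own proof is a one-line citation to \cite[Theorem 9.2]{olshanski2003problem}, which is a general integral-representation result for coherent systems on branching graphs proved via the path-space (ergodic decomposition) approach rather than via Choquet's theorem. Your fallback suggestion of citing \cite[Chapter 7]{borodin2017representations} is in the same spirit and would be an acceptable substitute.

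The Choquet route you outline, however, has a genuine gap at the compactness step, and your proposed fixes do not close it. The set $\mathcal{C}$ of coherent systems is \emph{not} compact in the product topology: by the translation invariance of the links (\Cref{rmk:trans_invariant_link}), for any coherent system $(M_n)$ and any $D\in\Z$ the shifted system $(M_n^{(D)})$ with $M_n^{(D)}(\lambda):=M_n(\lambda-D[n])$ is again coherent, and as $D\to\infty$ one has $M_n^{(D)}(\lambda)\to 0$ for every fixed $\lambda$, so the pointwise limit is the zero measure and mass escapes. Thus no uniform tightness across $\mathcal{C}$ can hold, and in particular the bound \eqref{eq:easy_to_bound} cannot provide it (that bound controls $L^n_k(\mu,\cdot)$ for a \emph{given} $\mu$, not uniformly over all coherent systems). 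A smaller issue: your continuity claim for $L^{n+1}_n$ in the product topology would require the \emph{columns} $\sum_{\mu} L^{n+1}_n(\mu,\lambda)$ to be summable, not the rows. The upshot is that the integral representation here really does rest on the general projective-limit/ergodic-decomposition machinery rather than on a direct Choquet argument, and you should cite it as the paper does.
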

\begin{proof}
Follows from \cite[Theorem 9.2]{olshanski2003problem}.
\end{proof}

It will also be necessary to put a topology on $\Sig_\infty$, namely the one inherited from the product topology on $\Z^\infty$ where $\Z$ is equipped with the cofinite topology. The following lemma shows that these natural choices of topology on $\Sig_\infty$ and $\partial \G$ are compatible.

\begin{lem}\label{thm:check_topologies}
The map 
\begin{align*}
    f:\Sig_\infty &\to \cM(\partial \G) \\
     \mu &\mapsto (M^\mu_n)_{n \geq 1}
\end{align*}
is continuous, hence in particular Borel.
\end{lem}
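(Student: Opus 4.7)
Because the topology on $\partial\G$ is the one on the inverse limit $\varprojlim \cM(\Sig_n)$ with each $\cM(\Sig_n) \subset \R^{\Sig_n}$ carrying the product topology, continuity of $f$ reduces to continuity of every scalar map $g_{n,\lambda}:\mu\mapsto M_n^\mu(\lambda)$ for $n\geq 1$ and $\lambda\in\Sig_n$. I will work throughout from the explicit formula
\[
M_n^\mu(\lambda) = \sqbinom{n}{\lambda}_t \prod_{x\in\Z} t^{(\mu'_x-\lambda'_x)(n-\lambda'_x)}\, (t^{1+\mu'_x-\lambda'_x};t)_{m_x(\lambda)}
\]
from \Cref{thm:find_boundary}, showing that this product depends continuously on $\mu$ in the product topology on $\Sig_\infty$.

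The first step is to localize the dependence. The Pochhammer factor equals $1$ outside $x\in\{\lambda_n,\ldots,\lambda_1\}$ since $m_x(\lambda)=0$ there; the power-of-$t$ factor equals $1$ whenever $\lambda'_x=n$ or $\mu'_x=\lambda'_x$. Each individual factor depends on $\mu$ only through the single statistic $\mu'_x\in\Z_{\geq 0}\cup\{\infty\}$. With the natural convention $t^\infty=0$ (compatible with $(0;t)_k=1$ and with the behavior of the Pochhammer factor), each factor is a well-defined function of $\mu'_x$.

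I then decompose $\Sig_\infty=\Sig_\infty^{unstable}\sqcup\bigsqcup_{D\in\Z}(\Y+D)$ as in the preamble to \Cref{thm:find_boundary}. On $\Sig_\infty^{unstable}$, every $\mu'_x$ is finite, and in fact each $\mu'_x$ (for $x$ in the finite range that matters) is a function of only finitely many coordinates $\mu_1,\ldots,\mu_{\mu'_{\lambda_n}}$; continuity of each coordinate projection $\mu\mapsto\mu_i$ in the product topology thus propagates through the finite product. On each stratum $\Y+D$, the translation-invariance of the formula (\Cref{rmk:P_trans_inv}, \Cref{rmk:trans_invariant_link}) and the convention $t^\infty=0$ let me reduce to the case $D=0$, i.e.\ to $\mu\in\Y$, where the analogous finite-coordinate argument applies directly.

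The main obstacle will be continuity at the interface between strata: a stable $\mu\in\Y+D$ can be approached by unstable signatures $\mu^{(k)}\in\Sig_\infty^{unstable}$ whose parts escape to $-\infty$ at varying rates. The key technical input here is \Cref{thm:measure_and_monotone}, which asserts that for unstable $\mu$ the values $M_n^{\mu^{(D')}}(\lambda)$ increase monotonically to $M_n^\mu(\lambda)$ as $D'\to-\infty$. Combined with translation-invariance and a standard $\epsilon/3$ argument (approximating each term in $M_n^{\mu^{(k)}}(\lambda)$ by $M_n^{(\mu^{(k)})^{(D')}}(\lambda)$ for appropriately chosen $D'$), this pins down the limiting behavior and delivers continuity of $g_{n,\lambda}$ on all of $\Sig_\infty$.
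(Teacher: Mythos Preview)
Your reduction to the scalar maps $g_{n,\lambda}(\mu)=M^\mu_n(\lambda)$ and your appeal to the explicit formula \eqref{eq:explicit_measures} match the paper, but the paper's argument is far shorter. It simply notes that $\Sig_\infty$ is first-countable, so continuity is equivalent to sequential continuity, and then asserts that if $\nu^{(k)}_i\to\mu_i$ for each $i$ then $M^{\nu^{(k)}}_n(\lambda)\to M^\mu_n(\lambda)$ can be read off directly from the formula. That is the entire proof.

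Your stratification into $\Sig_\infty^{unstable}\sqcup\bigsqcup_D(\Y+D)$ and the proposed $\epsilon/3$ interface argument via \Cref{thm:measure_and_monotone} are unnecessary, and in fact somewhat misdirected. The formula already treats $\mu'_x=\infty$ uniformly through the convention $t^\infty=0$ (so $(0;t)_k=1$), meaning stable and unstable signatures require no separate handling. Note too that \Cref{thm:measure_and_monotone} approximates an \emph{unstable} $\mu$ by stable truncations $\mu^{(D)}$, which is the reverse of what your interface argument (unstable sequence approaching a stable limit) would need.

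Your localization is also imprecise in a way that matters. The power-of-$t$ factors are not confined to $x\in\{\lambda_n,\ldots,\lambda_1\}$: they are nontrivial for all $\lambda_1<x\le\mu_1$, a $\mu$-dependent range. Likewise, saying ``$\mu'_x$ depends only on $\mu_1,\ldots,\mu_{\mu'_{\lambda_n}}$'' invokes a $\mu$-dependent number of coordinates, so continuity does not follow automatically from continuity of a fixed finite family of projections. The sequential viewpoint dissolves both issues at once: along a convergent sequence the first few parts eventually agree with those of $\mu$, which stabilizes both the range of nontrivial $x$ and the coordinates on which each $\mu'_x$ depends; the finite product then converges term by term.
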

\begin{proof}
Since $\Sig_\infty$ is first-countable, to show $f$ is continuous it suffices to show it preserves limits of sequences. Hence we must show that for any $\mu \in \Sig_\infty$, if $\nu^{(1)},\nu^{(2)},\ldots \in \Sig_\infty$ and $\nu^{(k)}_i \to \mu_i$ for all $i$, then $M^{\nu^{(k)}}_n \to M^\mu_n$ pointwise as functions on $\Sig_n$. This follows straightforwardly from the explicit formula \eqref{eq:explicit_measures} of \Cref{thm:find_boundary}.
\end{proof}

\begin{prop}\label{thm:all_boundary2}
For every $\mu \in \Sig_\infty$, the coherent system $(M^\mu_n)_{n \geq 1}$ is extreme.
\end{prop}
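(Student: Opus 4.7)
The plan is to combine the integral decomposition of coherent systems from \Cref{thm:extreme_combinations} and \Cref{thm:all_boundary1} with a concentration (``law of large numbers'') property of the measures $M^\nu_n$, then use bounded convergence to collapse the decomposition of $M^\mu$ to $\delta_\mu$.

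First, I would use \Cref{thm:extreme_combinations}, \Cref{thm:all_boundary1}, and \Cref{thm:check_topologies} to establish that for any coherent system $(M_n)_{n \geq 1}$ there is a Borel probability measure $\tilde\pi$ on $\Sig_\infty$ satisfying
\[
M_n(\lambda) = \int_{\Sig_\infty} M^\nu_n(\lambda) \, \tilde\pi(d\nu)
\]
for all $n \geq 1$ and $\lambda \in \Sig_n$. For this, a support calculation from \eqref{eq:explicit_measures} combined with \Cref{thm:measure_vanishing} shows the map $\nu \mapsto M^\nu$ is injective, which together with its continuity from \Cref{thm:check_topologies} and the Lusin--Souslin theorem allows us to pull back the measure on $\partial \G$ from \Cref{thm:extreme_combinations} to a Borel measure $\tilde\pi$ on $\Sig_\infty$. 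Specializing to $M = M^\mu$, extremality amounts to showing $\tilde\pi = \delta_\mu$.

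The key input is a concentration lemma: for every $\nu \in \Sig_\infty$ and every $x \in \Z$, the random variable $\lambda_x'$ under $M^\nu_n$ concentrates at $\nu_x'$ as $n \to \infty$. Concretely, for each finite $a \in \Z_{\geq 0}$,
\[
\lim_{n \to \infty} M^\nu_n(\{\lambda \in \Sig_n : \lambda_x' = a\}) = \mathbf{1}_{a = \nu_x'},
\]
interpreting the right-hand side as $0$ when $\nu_x' = \infty$. To prove this I would use the explicit formula \eqref{eq:explicit_measures}: by \Cref{thm:measure_vanishing}, $M^\nu_n$ is supported on $\lambda$ with $\lambda_x' \leq \nu_x'$, and when $\nu_x' < \infty$ and $\lambda_x' < \nu_x'$, the factor $t^{(\nu_x' - \lambda_x')(n - \lambda_x')}$ decays at least as fast as $t^{n - \nu_x' + 1}$. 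After bounding the remaining factors uniformly and summing over the signatures with $\lambda_x' = a$ for fixed $a$ (whose total weight should be controlled by a convergent, $n$-independent expression), the total probability of the bad event decays geometrically in $n$. The case $\nu_x' = \infty$ reduces via the translation invariance of \Cref{rmk:P_trans_inv} to a case where $\nu \in \Y$, after which the support constraints together with a similar decay handle the argument.

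Finally, applying bounded convergence to the integral representation yields, for each $x \in \Z$ and each finite $a$,
\[
\tilde\pi(\{\nu : \nu_x' = a\}) = \int \lim_{n \to \infty} M^\nu_n(\lambda_x' = a) \, \tilde\pi(d\nu) = \lim_{n \to \infty} M^\mu_n(\lambda_x' = a) = \mathbf{1}_{a = \mu_x'},
\]
where we use the concentration lemma for each $\nu$ in the first equality, the integral representation in the second, and the concentration lemma for $\mu$ itself in the third. Thus $\tilde\pi$ is supported on $\{\nu : \nu_x' = \mu_x'\}$ when $\mu_x' < \infty$; for $x$ with $\mu_x' = \infty$ the analogous argument with events $\{\lambda_x' \geq K\}$ and $K \to \infty$ shows $\tilde\pi$ is supported on $\{\nu : \nu_x' = \infty\}$. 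Intersecting over all $x \in \Z$ forces $\tilde\pi = \delta_\mu$. I expect the main obstacle to be the concentration lemma, specifically the uniform summation estimate controlling the total weight of $\{\lambda_x' = a\}$ under $M^\nu_n$ across all $a < \nu_x'$; the explicit formula should make this tractable, but the combinatorics of the remaining degrees of freedom (and the case $\nu_x' = \infty$) require careful bookkeeping.
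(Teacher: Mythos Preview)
Your strategy—pulling the integral decomposition back to $\Sig_\infty$ and then identifying $\tilde\pi$ via a concentration/LLN for $\lambda_x'$ under $M^\nu_n$ as $n\to\infty$—is a genuinely different route from the paper's and I believe can be made to work, but the concentration step is harder than you suggest. For $\nu\in\Y+D$ the support of $M^\nu_n$ (modulo trailing parts equal to $D$) is finite, and the factor $t^{(\nu_x'-a)(n-a)}$ does the job. For $\nu\in\Sig_\infty^{unstable}$, however, the support is genuinely infinite in the ``bottom'' coordinates, your translation-invariance reduction to $\Y$ does not apply (unstable signatures are not translates of partitions), and naive termwise bounds do not sum: one has to control the full measure of $\{\lambda_x'=a\}$ by comparison with some auxiliary probability measure, and the obvious candidates $M^{\nu^{(D)}}_n$ give inequalities in the wrong direction on part of the support. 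This is exactly the step you flag, and it is a real obstacle rather than bookkeeping.

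The paper avoids asymptotics entirely. After the same integral setup (and using $f\circ\iota=\Id$ with $f$ Borel from \Cref{thm:check_topologies}, so Lusin--Souslin is not needed), it first uses \Cref{thm:measure_vanishing} and a direct lower bound from \eqref{eq:explicit_measures} to pin the pulled-back measure to $S_{\le\mu}=\{\nu:\nu_i\le\mu_i\ \forall i\}$. Then for each fixed $k$ it evaluates the identity $M^\mu_k=\int M^\nu_k\,d(\iota_*\pi)$ at the \emph{single point} $\lambda=(\mu_1,\dots,\mu_k)$. For $\nu\in S_{\le\mu}$ agreeing with $\mu$ on the first $k$ parts, only the $x=\mu_k$ factor in \eqref{eq:explicit_measures} can differ, and it is monotone in $\nu_{\mu_k}'$, giving $M^\nu_k(\mu_1,\dots,\mu_k)\le M^\mu_k(\mu_1,\dots,\mu_k)$; since $M^\mu_k(\mu_1,\dots,\mu_k)>0$, the integral inequality forces full mass on $\{\nu:\nu_i=\mu_i,\ i\le k\}$, whence $\iota_*\pi=\delta_\mu$. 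This buys a much shorter proof with no $n\to\infty$ estimates; your route, if the unstable case were completed, would instead yield an independently interesting law of large numbers for the extreme coherent measures.
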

\begin{proof}
Fix $\mu \in \Sig_\infty$. By \Cref{thm:all_boundary1},
there is a Borel measure $\pi \in \cM(\partial \G)$.
\begin{equation}\label{eq:combination_pushforward}
    M^\mu_k = \int_{M' \in \partial \G} M_k' \pi(dM') = \int_{\nu \in \Sig_\infty} M^\nu_k (\iota_* \pi)(d\nu)
\end{equation}
where $\iota: \partial \G \inj \Sig_\infty$ is the inclusion guaranteed by \Cref{thm:all_boundary1}. Because $f \circ \iota = \Id$ and $f$ is Borel, $\iota$ is a Borel isomorphism onto its image, hence $\iota_* \pi$ is a Borel measure in the topology on $\Sig_\infty$ above.

We first claim that $\iota_* \pi$ is supported on 
\[
S_{\leq \mu} := \{\nu \in \Sig_\infty: \nu_i \leq \mu_i \text{ for all }i\}. 
\]
Suppose not. Since 
\[
\Sig_\infty \setminus S_{\leq \mu} = \bigcup_{k \geq 1} \{\nu \in \Sig_\infty: \nu_i > \mu_i \text{ for at least one }1 \leq i \leq k\}
\]
and 
\[
\{\nu \in \Sig_\infty: \nu_i > \mu_i \text{ for at least one }1 \leq i \leq k\} = \bigcup_{\substack{\lambda \in \Sig_k: \\ \exists i \text{ s.t. } \lambda_i > \mu_i}} \{\nu \in \Sig_\infty: \nu_i = \lambda_i \text{ for all }1 \leq i \leq k\},
\]
if $(\iota_* \pi)(\Sig_\infty \setminus S_{\leq \mu}) > 0$ then there exists $k$ and $\lambda \in \Sig_k$ such that 
\begin{equation}\label{eq:bad_set}
  (\iota_* \pi)(\{\nu \in \Sig_\infty: \nu_i = \lambda_i \text{ for all }1 \leq i \leq k\}) > 0.  
\end{equation}
Denoting the set in \eqref{eq:bad_set} by $S_k(\lambda) \subset \Sig_\infty$, we have
\begin{equation}\label{eq:wrong_bound}
    M_k^\mu(\lambda_1,\ldots,\lambda_k) = \int_{\nu \in S_k(\lambda)} M_k^\nu(\lambda_1,\ldots,\lambda_k) (\iota_* \pi)(d\nu) + \int_{\nu \in \Sig_\infty \setminus S_k(\lambda)} M_k^\nu(\lambda_1,\ldots,\lambda_k) (\iota_* \pi)(d\nu).
\end{equation}
The LHS is $0$ by \Cref{thm:measure_vanishing}. If $\nu \in S_k(\lambda)$, then the only factor in 
\[
M_k^\nu(\lambda_1,\ldots,\lambda_k) = \sqbinom{k}{\lambda}_t \prod_{x \in \Z_{\geq \lambda_k}} t^{(\nu'_x - \lambda'_x)(k-\lambda'_x)} (t^{1+\nu'_x-\lambda'_x};t)_{m_x(\lambda)}
\]
which depends on $\nu$ is $(t^{1+\nu'_{\lambda_k}-k};t)_{m_{\lambda_k}(\lambda)}$, which is clearly bounded below by $(t;t)_\infty$. Hence the RHS of \eqref{eq:wrong_bound} is bounded below by 
\[
(\iota_* \pi)(S_k(\lambda)) (t;t)_\infty  \sqbinom{k}{\lambda}_t > 0,
\]
a contradiction. Therefore $\iota_* \pi$ is indeed supported on $S_{\leq \mu}$. 

For each $k \geq 1$ we may decompose 
\[
S_{\leq \mu} = \left(S_{\leq \mu} \cap  S_k(\mu_1,\ldots,\mu_k)\right) \sqcup \left( S_{\leq \mu} \cap \left( S_k(\mu_1,\ldots,\mu_k)\right)^c\right)
\]
into those signatures which agree with $\mu$ on the first $k$ coordinates and those which do not, and
\begin{multline}\label{eq:less_or_equal_ints}
    M^\mu_k(\mu_1,\ldots,\mu_k) = \int_{\nu \in S_{\leq \mu} \cap S_k(\mu_1,\ldots,\mu_k)} M^\nu_k(\mu_1,\ldots,\mu_k) (\iota_* \pi)(d\nu) \\ + \int_{\nu \in S_{\leq \mu} \cap (S_k(\mu_1,\ldots,\mu_k)^c)} M^\nu_k(\mu_1,\ldots,\mu_k) (\iota_* \pi)(d\nu).
\end{multline}
The second integral in \eqref{eq:less_or_equal_ints} is always $0$ by \Cref{thm:measure_vanishing}. If $\nu \in S_{\leq \mu} \cap S_k(\mu_1,\ldots,\mu_k)$ then $\nu_x'=\mu_x'$ for $x > \mu_k$ and $\nu_x' \leq \mu_x'$ when $x=\mu_k$. Hence 
\[
(t^{1+\nu_x'-k};t)_{m_x(\mu_1,\ldots,\mu_k)} \leq (t^{1+\mu_x'-k};t)_{m_x(\mu_1,\ldots,\mu_k)}
\]
for all $x$, and all other factors in \eqref{eq:explicit_measures} are the same for $M_k^\nu(\mu_1,\ldots,\mu_k)$ and $M_k^\mu(\mu_1,\ldots,\mu_k)$, therefore 
\[
M_k^\nu(\mu_1,\ldots,\mu_k) \leq M_k^\mu(\mu_1,\ldots,\mu_k) \text{ for all }\nu \in S_{\leq \mu} \cap S_k(\mu_1,\ldots,\mu_k).
\]
Hence \eqref{eq:less_or_equal_ints} reduces to 
\begin{equation}
    M^\mu_k(\mu_1,\ldots,\mu_k) \leq M^\mu_k(\mu_1,\ldots,\mu_k) \cdot (\iota_* \pi)(S_{\leq \mu} \cap S_k(\mu_1,\ldots,\mu_k)).
\end{equation}
Since $M^\mu_k(\mu_1,\ldots,\mu_k)  > 0$ by \eqref{eq:explicit_measures}, it follows that 
\[
(\iota_* \pi)(S_{\leq \mu} \cap S_k(\mu_1,\ldots,\mu_k)) = 1.
\]
Since this is true for all $k$ and $\bigcap_k \left(S_{\leq \mu} \cap S_k(\mu_1,\ldots,\mu_k)\right) = \{\mu\}$, it follows that $(\iota_* \pi)(\{\mu\})=1$, i.e. $\iota_* \pi$ is the delta mass at $\mu$. Hence $(M^\mu_n)_{n \geq 1}$ is an extreme coherent system, completing the proof.
\end{proof}

\section{Infinite {$p$}-adic random matrices and corners} \label{sec:infinite_matrices}

In this section, we turn to $p$-adic random matrix theory and prove \Cref{thm:recover_BQ} and \Cref{thm:recover_assiotis}. We will first give the basic setup of $p$-adic random matrices and the key result \Cref{thm:p-adic_corners} which relates the operations of removing rows and columns to Hall-Littlewood polynomials. In \Cref{subsec:boundary_aux} we prove auxiliary boundary results on a slightly more complicated branching graph which extends the one in the previous section, which are tailored to the random matrix corner situation. We then use these to deduce the result \Cref{thm:recover_BQ}, that extreme bi-invariant measures on $\Mat_{\infty \times \infty}(\Q_p)$ are parametrized by the set $\bSig_\infty$ defined in \Cref{def:extended_sigs} below, from the parametrization of the boundary of this augmented branching graph by $\bSig_\infty$ (\Cref{thm:double_boundary}). 

\subsection{$p$-adic background.}

The following basic background is more or less quoted from \cite{van2020limits} and is a condensed version of the exposition in \cite[\S 2]{evans2002elementary}, to which we refer any reader desiring a more detailed introduction to $p$-adic numbers. Fix a prime $p$. Any nonzero rational number $r \in \Q^\times$ may be written as $r=p^k (a/b)$ with $k \in \Z$ and $a,b$ coprime to $p$. Define $|\cdot|: \Q \to \R$ by setting $|r|_p = p^{-k}$ for $r$ as before, and $|0|_p=0$. Then $|\cdot|_p$ defines a norm on $\Q$ and $d_p(x,y) :=|x-y|_p$ defines a metric. We define the \emph{field of $p$-adic numbers} $\Q_p$ to be the completion of $\Q$ with respect to this metric, and the \emph{$p$-adic integers} $\Z_p$ to be the unit ball $\{x \in \Q_p : |x|_p \leq 1\}$. It is not hard to check that $\Z_p$ is a subring of $\Q_p$. We remark that $\Z_p$ may be alternatively defined as the inverse limit of the system $\ldots \to \Z/p^{n+1}\Z \to \Z/p^n \Z \to \cdots \to \Z/p\Z \to 0$, and that $\Z$ naturally includes into $\Z_p$. 

$\Q_p$ is noncompact but is equipped with a left- and right-invariant (additive) Haar measure; this measure is unique if we normalize so that the compact subgroup $\Z_p$ has measure $1$, and we denote it by $\M$. The restriction of this measure to $\Z_p$ is the unique Haar probability measure on $\Z_p$, and is explicitly characterized by the fact that its pushforward under any map $r_n:\Z_p \to \Z/p^n\Z$ is the uniform probability measure. For concreteness, it is often useful to view elements of $\Z_p$ as `power series in $p$' $a_0 + a_1 p + a_2 p^2 + \ldots$, with $a_i \in \{0,\ldots,p-1\}$; clearly these specify a coherent sequence of elements of $\Z/p^n\Z$ for each $n$. The Haar probability measure then has the alternate explicit description that each $a_i$ is iid uniformly random from $\{0,\ldots,p-1\}$. Additionally, $\Q_p$ is isomorphic to the ring of Laurent series in $p$, defined in exactly the same way.

$\GL_n(\Z_p) \times \GL_m(\Z_p)$ acts on $\Mat_{n \times m}(\Q_p)$ by left- and right multiplication. The orbits of this action are parametrized by signatures with possibly infinite parts, which we now define formally.

\begin{defi}\label{def:extended_sigs}
For $n \in \Z_{\geq 1}$, we let 
\[
\bSig_n := \{(\lambda_1,\ldots,\lambda_n) \in (\Z \cup \{-\infty\})^n: \lambda_1 \geq \ldots \geq \lambda_n\},
\]
where we take $-\infty < a$ for all $a \in \Z$, and refer to elements of $\bSig_n$ as \emph{extended signatures}. The definition of $\bSig_\infty$ is exactly analogous. For $0 \leq k \leq n $, we denote by $\bSig_n^{(k)} \subset \bSig_n$ the set of all extended signatures with exactly $k$ integer parts and the rest equal to $-\infty$. For $\lambda \in \bSig_n^{(k)}$, we denote by $\lambda^* \in \Sig_k$ the signature given by its integer parts.
\end{defi}

The parametrization, stated below, is often called Smith normal form.

\begin{prop}\label{thm:smith}
Let $n \leq m$. For any $A \in \Mat_{n \times m}(\Q_p)$, there exists a unique $\lambda \in \bSig_n$ for which there exist $U \in \GL_n(\Z_p), V \in \GL_m(\Z_p)$ such that $UAV = \diag_{n \times m}(p^{-\lambda_1},\ldots,p^{-\lambda_n})$, where we formally take $p^\infty = 0$.
\end{prop}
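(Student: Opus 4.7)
My plan would be to combine a $p$-adic Gaussian elimination argument for existence with a determinantal-ideal invariant for uniqueness, after first reducing to the case $A \in \Mat_{n \times m}(\Z_p)$. That reduction is clean: multiplying $A$ by a large enough power $p^N \in \Q_p^\times$ brings all entries into $\Z_p$ and shifts each $\lambda_i$ by $-N$, so existence and uniqueness for the rescaled matrix imply them for $A$.

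For existence, I would iterate a $p$-adic pivoting step. If $A = 0$ set $\lambda = (-\infty,\ldots,-\infty)$; otherwise locate an entry $a_{ij}$ of minimal valuation $v$ and move it to position $(1,1)$ by permutation matrices in $\GL_n(\Z_p) \times \GL_m(\Z_p)$. Since $a_{11}$ realizes the minimum valuation, every ratio $a_{1j}/a_{11}$ and $a_{i1}/a_{11}$ lies in $\Z_p$, so multiplying by appropriate unipotent upper- and lower-triangular matrices over $\Z_p$ zeros out the rest of the first row and column. The resulting $(n-1) \times (m-1)$ submatrix $A'$ has all entries with valuation $\geq v$, because each new entry $a_{ij} - (a_{i1}/a_{11})a_{1j}$ is a $\Z_p$-combination of entries of valuation $\geq v$. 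Recursing and padding with $-\infty$ once the submatrix becomes zero produces the diagonal form $\diag_{n \times m}(p^{-\lambda_1},\ldots,p^{-\lambda_n})$ with the $\lambda_i$ automatically in nonincreasing order.

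For uniqueness, I would use $p$-adic elementary divisors. For each $1 \leq k \leq n$, let $I_k(A) \subset \Q_p$ be the fractional ideal generated by all $k \times k$ minors of $A$. The Cauchy--Binet formula together with $\det \GL_n(\Z_p) = \Z_p^\times$ shows that $I_k(UAV) = I_k(A)$ for $U \in \GL_n(\Z_p), V \in \GL_m(\Z_p)$. For the diagonal form $D = \diag_{n \times m}(p^{-\lambda_1},\ldots,p^{-\lambda_n})$, the minor on the first $k$ rows and columns equals $p^{-(\lambda_1+\cdots+\lambda_k)}$, while every other $k \times k$ minor has weakly larger valuation by the monotonicity of the $\lambda_i$; hence $I_k(D)$ is the fractional ideal $(p^{-(\lambda_1+\cdots+\lambda_k)})$, or the zero ideal if any $\lambda_j = -\infty$ for $j \leq k$. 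This pins down each partial sum $\lambda_1 + \cdots + \lambda_k$, and hence each individual $\lambda_k$.

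The main obstacle I anticipate is the bookkeeping around infinite parts, namely verifying that the algorithm yields exactly $\operatorname{rank}(A)$ integer diagonal entries and that the uniqueness argument matches the vanishing $I_k(A) = 0$ to $\lambda_k = -\infty$. Concretely, $\operatorname{rank}(A)$ is preserved by the $\GL(\Z_p)$ actions (they extend to $\GL(\Q_p)$ actions), so it equals the number of nonzero diagonal entries of $D$; and $I_k(A) = 0$ iff $\operatorname{rank}(A) < k$, which identifies the first index where $\lambda_k = -\infty$. Beyond this bookkeeping, the argument is a routine transfer of Smith normal form over the DVR $\Z_p$ to matrices with entries in its field of fractions.
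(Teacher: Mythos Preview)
Your proof is correct and follows the standard route to Smith normal form over a discrete valuation ring, extended to its fraction field; the reduction to $\Z_p$, the pivoting argument for existence, and the determinantal-ideal invariant for uniqueness are all sound, and your treatment of the rank-deficient case via $I_k(A)=0 \Leftrightarrow \operatorname{rank}(A)<k$ correctly handles the parts equal to $-\infty$.

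The paper, however, does not prove this proposition at all: it is stated without proof as the well-known Smith normal form (the parametrization of $\GL_n(\Z_p)\times\GL_m(\Z_p)$-orbits on $\Mat_{n\times m}(\Q_p)$), and the exposition immediately moves on to defining $\SN(A)$ in terms of it. So there is nothing in the paper to compare against beyond noting that your argument supplies exactly the standard proof the paper takes for granted.
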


For any $\lambda \in \bSig_{\min(m,n)}$, it is useful to consider the probability measure on $\Mat_{n \times m}(\Q_p)$ with distribution $U \diag_{n \times m}(p^{-\lambda_1},\ldots,p^{-\lambda_n}) V$ where $U \in \GL_n(\Z_p),V \in \GL_m(\Z_p)$ are Haar-distributed. By uniqueness of the Haar measure, this measure is the unique one which is left- and right-invariant under multiplication by $\GL_n(\Z_p)$ and $\GL_m(\Z_p)$ respectively.

\begin{defi}
We denote the extended signature in \Cref{thm:smith} by $\SN(A) \in \bSig_{\min(m,n)}$, and denote this signature padded with infinitely many parts equal to $-\infty$ by $\ESN(A) \in \bSig_\infty$. We refer to the finite parts of either signature as the \emph{singular numbers} of $A$. 
\end{defi}

The reason for padding with $-\infty$ is to allow us to treat matrices of different sizes on equal footing, essentially viewing them as corners of a large matrix of low rank. 
 It is somewhat unwieldy but seemed to be the least awkward formalism for the problem at hand.

\begin{rmk}
We have defined singular numbers with the opposite sign convention as \cite{van2020limits} (though the same sign convention as \cite{assiotis2020infinite,bufetov2017ergodic}) to match with the branching graph notation of \Cref{sec:branching_graphs} and the latter references. 
\end{rmk}

\begin{prop}\label{thm:p-adic_corners}
Let $n, m \geq 1$ be integers, $\mu \in \bSig_\infty$ with $\len(\mu^*) \leq \min(m+1,n)$, let $A \in \Mat_{n \times (m+1)}(\Q_p)$ be distributed by the unique bi-invariant measure with singular numbers $\mu$, and let $t=1/p$. If $A' \in \Mat_{n \times m}$ is the first $m$ columns of $A$, then $\ESN(A')$ is a random element of $\bSig_\infty$ with
    \begin{equation}\label{eq:p-adic_corners}
        \Pr( \ESN(A') = \lambda) = \begin{cases}
     \frac{Q_{-\lambda^*/-\mu^*}(t^{m+1-k})P_{-\lambda^*}(1,\ldots,t^{k-1})}{P_{-\mu^*}(1,\ldots,t^{k-1}) \Pi(t^{m+1-k}; 1,\ldots,t^{k-1})} & \mu,\lambda \in \bSig_\infty^{(k)} \text{ for some }0 \leq k \leq \min(m,n) \\
    P_{\mu^*/\lambda^*}(t^m) \frac{P_{\lambda^*}(1,\ldots,t^{m-1})}{P_{\mu^*}(1,\ldots,t^{m})} & \mu \in \bSig_\infty^{(m+1)},\lambda \in \bSig_\infty^{(m)} \\
    0 & \text{otherwise}
    \end{cases}
    \end{equation} 
for any $\lambda \in \bSig_\infty$.
\end{prop}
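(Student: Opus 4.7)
The plan is to prove the result by reducing to a computation of singular numbers of submatrices of Haar-distributed matrices over $\Z_p$, via the Smith decomposition. By bi-invariance of the distribution of $A$ and uniqueness of Haar measure on the relevant groups, we may represent $A = U D V$ where $U \in \GL_n(\Z_p)$, $V \in \GL_{m+1}(\Z_p)$ are independent Haar-distributed and $D = \diag_{n \times (m+1)}(p^{-\mu_1^*},\ldots,p^{-\mu_k^*},0,\ldots,0)$ with $k = \#\{i : \mu_i^* \in \Z\}$. Writing $V'$ for the $(m+1) \times m$ matrix consisting of the first $m$ columns of $V$, left-invariance of singular numbers under $\GL_n(\Z_p)$ gives $\ESN(A') = \ESN(D V')$, reducing the problem to analyzing singular numbers of $D V'$.

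Since column-removal decreases rank by at most one, the two nonzero cases correspond to the rank of $A'$ being either $k$ (Case 1) or $k - 1$; the latter is only possible when $k = m+1$, giving Case 2, and the vanishing claim in all other configurations is then immediate. For Case 2 ($\mu \in \bSig_\infty^{(m+1)}$), the matrix $A$ has full column rank $m+1$ and the conditional distribution of $\ESN(A')$ coincides with the Hall-Littlewood cotransition probability $L^{m+1}_m(\mu^*, \lambda^*)$ of \Cref{def:our_graph}; this is the classical Hall polynomial computation, established in \cite{van2020limits}. Case 1 is the more delicate rank-preserving case: only the top $k$ rows of $D V$ are nonzero, so the problem reduces to the singular numbers of $D_0 V_0'$, where $D_0 = \diag(p^{-\mu_1^*},\ldots,p^{-\mu_k^*})$ and $V_0'$ is the $k \times m$ matrix formed by the first $k$ rows and first $m$ columns of $V$. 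The distribution of $V_0$ (the first $k$ rows of a Haar-distributed $V \in \GL_{m+1}(\Z_p)$) can be made explicit, and one should be able to read off the conditional distribution of $\ESN(D_0 V_0')$ in the Hall-Littlewood Markov-kernel form from \eqref{eq:general_cauchy_dynamics} with $\ba=(t^{m+1-k})$ and $\mathbf{b}=(1,t,\ldots,t^{k-1})$, after making the sign change $\mu^* \mapsto -\mu^*$, $\lambda^* \mapsto -\lambda^*$ that converts singular numbers into cokernel invariants.

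The main obstacle is Case 1, where one must identify the ratio of Smith-normal-form distributions of the $k \times (m+1)$ and $k \times m$ random matrices above with the skew Hall-Littlewood function $Q_{-\lambda^*/-\mu^*}(t^{m+1-k})$ times a ratio of principally specialized $P$ functions divided by a Cauchy kernel. The cleanest route is through Hall's classical formula for counts of subgroups of a finite abelian $p$-group together with the skew Cauchy identity \Cref{thm:finite_cauchy}, specialized to the relevant principal specializations; consistency across varying $m$ then follows automatically from the Markov property of column-removal combined with the branching rule for Hall-Littlewood polynomials. The present statement is essentially a restatement of the calculation carried out in detail in \cite{van2020limits}, serving as the input to the boundary identification arguments in \Cref{sec:branching_graphs}.
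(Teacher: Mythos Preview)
Your proposal is correct and takes essentially the same approach as the paper: both reduce via the Smith decomposition and bi-invariance, then defer the actual Hall--Littlewood identification to \cite[Theorem 1.3, Part 2]{van2020limits}. The only difference is organizational---the paper splits into the full-rank case $\len(\mu^*)=\min(m+1,n)$ (citing \cite{van2020limits} directly) versus the non-full-rank case (reduced to a $k\times(m+1)$ full-rank matrix), whereas you split into rank-preserving (Case~1) versus rank-dropping (Case~2), but the underlying reductions coincide.
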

\begin{proof}
In the case where $\len(\mu^*) = \min(m+1,n)$ so that $A$ is full-rank, the result follows by applying \cite[Theorem 1.3, Part 2]{van2020limits} (taking care that the singular numbers in that paper are the negatives of the singular numbers here). 
The non full-rank case $\len(\mu^*) < \min(m+1,n)$ follows from the full-rank case with $m+1 > n$, as in this case the rank of $A$ does not change after removing the $(m+1)\tth$ column.
\end{proof}

Because $\ESN(A) = \ESN(A^T)$, \Cref{thm:p-adic_corners} obviously holds for removing rows rather than columns after appropriately relabeling the indices. By relating matrix corners to Hall-Littlewood polynomials, \Cref{thm:p-adic_corners} provides the key to applying the results on Hall-Littlewood branching graphs to study $p$-adic random matrices. In the second case of the transition probabilities in \eqref{eq:p-adic_corners}, one immediately recognizes the cotransition probabilities of \Cref{sec:branching_graphs}. However, one now has two added features not present in that section: (1) the signatures may have infinite parts, and (2) with matrices one may remove either rows or columns, so there are in fact two (commuting) corner maps. In the next subsection, we augment the branching graph formalism and results of \Cref{sec:branching_graphs} to handle this more complicated setup. However, let us first introduce the setup of infinite matrices.

\begin{defi}
$\GL_\infty(\Z_p)$ is the direct limit $\varinjlim \GL_N(\Z_p)$ with respect to inclusions 
\begin{align*}
    \GL_N(\Z_p) &\inj \GL_{N+1}(\Z_p) \\
    A & \mapsto \begin{pmatrix} A & 0 \\ 0 & 1\end{pmatrix}
\end{align*}
Equivalently, $\GL_\infty(\Z_p) = \bigcup_{N \geq 1} \GL_N(\Z_p)$ where we identify $\GL_N(\Z_p)$ with the group of infinite matrices for which the top left $N \times N$ corner is an element of $\GL_N(\Z_p)$ and all other entries are $1$ on the diagonal and $0$ off the diagonal.
\end{defi}

The definition 
\[
\Mat_{n \times m}(\Q_p) := \left\{Z = (Z_{ij})_{\substack{1 \leq i \leq n \\ 1 \leq j \leq m}}: Z_{ij} \in \Q_p\right\}.
\]
still makes sense when $n$ or $m$ is equal to $\infty$ by replacing $1 \leq i \leq n$ with $i \in \Z_{\geq 1}$ and similarly for $m$. When $n$ or $m$ is $\infty$, $\GL_\infty(\Z_p)$ clearly acts on this space on the appropriate side.

\subsection{Auxiliary boundary results and proof of {\Cref{thm:recover_BQ}}.} \label{subsec:boundary_aux}

In this subsection we prove a similar result to \Cref{thm:boundary}, \Cref{thm:finite_boundary}, and deduce an extension to a `two-dimensional' version of the branching graph $\G$ in \Cref{thm:double_boundary}.

\begin{defi}
For each $k \geq 1$, we define a graded graph 
\[
\G^{(k)} = \bigsqcup_{n \geq 1} \G^{(k)}(n)
\]
with vertex set at each level given by $\G^{(k)}(n) = \Sig_k$. Edges are only between adjacent levels, and to each edge from $\nu \in \G^{(k)}(n+1)$ to $\lambda \in \G^{(k)}(n)$ is associated a cotransition probability
\[
\tL^{n+1}_n(\nu,\lambda) = Q_{-\lambda/-\nu}(t^n) \frac{P_{-\lambda}(1,t,\ldots,t^{k-1})}{P_{-\nu}(1,t,\ldots,t^{k-1}) \Pi(1,t,\ldots,t^{k-1}; t^n)}.
\]
We define $\tL^m_n = \tL^{n+1}_n \cdots \tL^m_{m-1}$ for general $1 \leq n < m < \infty$ as before.
\end{defi}

The next result is a version of \Cref{thm:boundary} for this smaller branching graph $\G^{(k)}$. Recall the definition of boundary from earlier in this section.

\begin{thm}\label{thm:finite_boundary}
For any $t \in (0,1)$, the boundary $\partial \G^{(k)}$ is naturally in bijection with $\Sig_k$. Under this bijection, $\mu \in \Sig_k$ corresponds to the coherent system $(M_n^\mu)_{n \geq 1}$ defined explicitly by
\begin{equation}\label{eq:finite_coherent_explicit}
    M_n^\mu(\lambda) = Q_{-\lambda/-\mu}(t^n,t^{n+1},\ldots) \frac{P_{-\lambda}(1,t,\ldots,t^{k-1})}{P_{-\mu}(1,t,\ldots,t^{k-1}) \Pi(1,t,\ldots,t^{k-1}; t^n,t^{n+1},\ldots)}
\end{equation}
for $\lambda \in \Sig_k$.
\end{thm}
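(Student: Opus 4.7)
The plan is to run the Vershik-Kerov ergodic method in parallel with the proof of \Cref{thm:boundary}, adapted to the graph $\G^{(k)}$ whose levels are all copies of $\Sig_k$. Coherency and the fact that $(M^\mu_n)_{n\geq 1}$ are probability measures both reduce to the skew Cauchy identity of \Cref{thm:finite_cauchy} applied with the specializations $(x_1,\ldots,x_k) = (1,t,\ldots,t^{k-1})$ and $(y_1,y_2,\ldots) = (t^n,t^{n+1},\ldots)$; the infinite $y$-specialization is well-defined thanks to \Cref{thm:skew_formula}. Coherency itself follows from the branching identity $\sum_\nu Q_{-\lambda/-\nu}(\bx)Q_{-\nu/-\mu}(\by) = Q_{-\lambda/-\mu}(\bx,\by)$ implicit in \Cref{def:skew_fns} and \Cref{thm:branching_formulas}, combined with the multiplicativity of the Cauchy kernel $\Pi$ in its second argument.

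Next I would apply the Vershik-Kerov approximation result (\Cref{thm:approximation}) to reduce the classification of $\partial \G^{(k)}$ to computing limits $\lim_N \tL^N_n(\nu(N),\cdot)$ along regular sequences $(\nu(N))_{N\geq 1}\subset\Sig_k$. Using the explicit principal specialization formula for $\tL^N_n(\nu,\lambda)$ obtained by combining \Cref{thm:finite_skew_fn_computation} with \Cref{thm:hl_principal_formulas}, the goal is to establish: (a) if $\nu(N) \to \mu \in \Sig_k$ coordinate-wise then $\tL^N_n(\nu(N),\lambda) \to M^\mu_n(\lambda)$ for each $\lambda$, by direct continuity in the spirit of \Cref{thm:conv_means_regular}; and (b) any regular sequence $(\nu(N))_{N\geq 1}$ must converge coordinate-wise to an element of $\Sig_k$. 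Part (b) is the main obstacle, since a priori the coordinates $\nu(N)_i$ could drift to $\pm\infty$. The links $\tL^m_n$ are translation-invariant (analogous to \Cref{rmk:trans_invariant_link}), so by subtracting $\nu(N)_k[k]$ one reduces to ruling out $\nu(N)_1 \to +\infty$. In that regime I expect the explicit formula to show $\tL^N_n(\nu(N),\lambda) \to 0$ pointwise in $\lambda$: the leading $(t^n)^{|\lambda|-|\nu(N)|}$ factor from $Q_{-\lambda/-\nu(N)}(t^n,\ldots,t^{N-1})$ decays geometrically in $|\nu(N)|$, overwhelming the remaining principally specialized factors and contradicting the stochasticity $\sum_\lambda \tL^N_n(\nu(N),\lambda)=1$. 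This parallels the tightness argument inside the proof of \Cref{thm:all_boundary1}.

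Finally, extremality of each $(M^\mu_n)_{n\geq 1}$ would be established by the Choquet-decomposition argument of \Cref{thm:all_boundary2}. Using the analogues of \Cref{thm:extreme_combinations} and \Cref{thm:check_topologies} (the latter being immediate from the explicit product formula for $M^\mu_n$), any nontrivial decomposition takes the form $M^\mu_n = \int_{\Sig_k} M^\nu_n\,\pi(d\nu)$ for some Borel probability $\pi$ on $\Sig_k$. The $Q$-branching rule forces $M^\nu_n(\lambda)=0$ unless $\nu_i \geq \lambda_i$ for all $i$, which restricts $\Supp(\pi)$ to $\{\nu:\nu \geq \mu\}$ once the identity is evaluated at $\lambda=\mu$. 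A monotonicity argument comparing $M^\nu_n(\mu)$ to $M^\mu_n(\mu)$ for $\nu > \mu$ --- the key input being the strict decay $(t^n)^{|\nu|-|\mu|}<1$ in the leading factor of $Q_{-\mu/-\nu}(t^n,t^{n+1},\ldots)$ --- then forces $\pi=\delta_\mu$, paralleling the endgame of the proof of \Cref{thm:all_boundary2}.
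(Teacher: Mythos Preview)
Your overall strategy---running the Vershik--Kerov ergodic method in parallel with \Cref{thm:boundary}---is exactly what the paper does, and the paper's own proof is only a sketch referring back to \Cref{thm:all_boundary1} and \Cref{thm:all_boundary2}. Two specific details in your sketch are incorrect, however.

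First, in part (b) the translation-invariance reduction is invalid: subtracting $\nu(N)_k[k]$ is an $N$-dependent shift, and regularity is not preserved under $N$-dependent translations (one has $\tL^N_n(\nu(N)-\nu(N)_k[k],\lambda)=\tL^N_n(\nu(N),\lambda+\nu(N)_k[k])$, which evaluates the original link at a moving target). The paper handles the two directions separately, just as in the proof of \Cref{thm:all_boundary1}: bounded below follows from the vanishing $\tL^N_n(\nu,\lambda)=0$ whenever $\lambda_i>\nu_i$ for some $i$ (the analogue of \Cref{thm:measure_vanishing} via the $Q$-branching rule), and bounded above from a naive bound using \Cref{thm:finite_skew_fn_computation}. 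Also note the leading factor of $Q_{-\lambda/-\nu}(t^n,\ldots)$ is $(t^n)^{|\nu|-|\lambda|}$, not $(t^n)^{|\lambda|-|\nu|}$; with the correct sign your decay intuition is right.

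Second, in the extremality argument, evaluating $M^\mu_n = \int M^\nu_n\,\pi(d\nu)$ at $\lambda=\mu$ only shows that $\pi$ has positive mass on $\{\nu\geq\mu\}$, not that $\Supp(\pi)\subset\{\nu\geq\mu\}$. The correct restriction, following the proof of \Cref{thm:all_boundary2}, goes the other way: $\Supp(\pi)\subset\{\nu:\nu_i\leq\mu_i\text{ for all }i\}$. Indeed, if $\pi(\nu^*)>0$ with $\nu^*_j>\mu_j$ for some $j$, then evaluating at $\lambda=\nu^*$ gives $M^\mu_n(\nu^*)=0$ on the left (by vanishing) but a strictly positive contribution $\pi(\nu^*)M^{\nu^*}_n(\nu^*)>0$ on the right, a contradiction. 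Once $\Supp(\pi)\subset\{\nu\leq\mu\}$ is established, evaluating at $\lambda=\mu$ and using $M^\nu_n(\mu)=0$ for every $\nu\leq\mu$ with $\nu\neq\mu$ gives $M^\mu_n(\mu)=\pi(\mu)M^\mu_n(\mu)$, forcing $\pi=\delta_\mu$.
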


Note we have simultaneously suppressed the $k$-dependence in our notation for the measure $M_n^{\mu}$ on $\Sig_k$ and abused notation by using the same for measures on $\G$ and $\G^{(k)}$, but there is no ambiguity if one knows the length of $\mu$. The proof of \Cref{thm:finite_boundary} is an easier version of the proof of \Cref{thm:boundary}, so we simply give a sketch and outline the differences.

\begin{proof}

We first prove that every extreme coherent system is of the form \eqref{eq:finite_coherent_explicit} for some $\mu \in \Sig_k$. The analogue of \Cref{thm:approximation} similarly follows from the general result \cite[Theorem 6.1]{okounkov1998asymptotics}, so there exists a regular sequence $(\mu(n))_{n \geq 1}$ approximating any extreme coherent system. Using the explicit formula \eqref{eq:finite_skew_Q} of \Cref{thm:finite_skew_fn_computation}, a naive bound as in the proof of \Cref{thm:all_boundary1} establishes that $\mu(n)_1$ is bounded above. 

The analogue of \Cref{thm:measure_vanishing}, namely that $\tL^m_n(\mu,\lambda) = 0$ and $M_n^\mu(\lambda) = 0$ if there exists an $x$ for which $\lambda_x' > \mu_x'$, holds similarly by the branching rule. Using this one obtains that a regular sequence $(\mu(n))_{n \geq 1}$ must have last parts $\mu(n)_k$ bounded below. Together with the upper bound this yields that $(\mu(n))_{n \geq 1}$ has a convergent subsequence, where here convergence simply means that all terms in the subsequence are equal to the same $\mu \in \Sig_k$. It now follows as in the proof of \Cref{thm:all_boundary1} that in fact the coherent system approximated by $(\mu(n))_{n \geq 1}$ must be $(M^\mu_n)_{n \geq 1}$ for this $\mu$. 

It remains to prove that every coherent system of the form \eqref{eq:finite_coherent_explicit} is in fact extreme. The proof is the same as that of \Cref{thm:all_boundary2} using the above analogue of \Cref{thm:measure_vanishing}, except that no measure-theoretic details are necessary because the decomposition of an arbitrary coherent system into extreme ones takes the form of a sum over the countable set $\Sig_k$. 
\end{proof}

For applications in the next section it is desirable to in some sense combine $\G$ and $\G^{(k)}$ by working with extended signatures. We wish to define a doubly-graded graph with cotransition probabilities which generalize the earlier $L^{n+1}_n, \tL^{n+1}_n$ and which correspond to the situation of removing rows and columns from a matrix in \Cref{thm:p-adic_corners}.

\begin{defi}
Define
\[
\GG = \bigsqcup_{m,n \geq 1} \GG(m,n)
\]
with $\GG(m,n) = \bSig_\infty$ for each $m,n$, and edges from $\GG(m+1,n)$ to $\GG(m,n)$ with weights
\begin{equation}\label{eq:twoD_L}
     L^{m+1,n}_{m,n}(\mu,\lambda) = \begin{cases}
     \frac{Q_{-\lambda^*/-\mu^*}(t^{m+1-k})P_{-\lambda^*}(1,\ldots,t^{k-1})}{P_{-\mu^*}(1,\ldots,t^{k-1}) \Pi(t^{m+1-k}; 1,\ldots,t^{k-1})} & \mu,\lambda \in \bSig_\infty^{(k)} \text{ for some }0 \leq k \leq \min(m,n) \\
    P_{\mu^*/\lambda^*}(t^m) \frac{P_{\lambda^*}(1,\ldots,t^{m-1})}{P_{\mu^*}(1,\ldots,t^{m})} & \mu \in \bSig_\infty^{(m+1)},\lambda \in \bSig_\infty^{(m)} \\
    0 & \text{otherwise}
    \end{cases}
\end{equation}
and edges from $\GG(m,n+1)$ to $\GG(m,n)$ with weights $L^{m,n+1}_{m,n}(\mu,\lambda) = L^{n+1,m}_{n,m}(\mu,\lambda)$. 
\end{defi}

It follows immediately from the Cauchy identity \Cref{thm:finite_cauchy} that \[
L^{m+1,n}_{m,n}L^{m+1,n+1}_{m+1,n} = L^{m,n+1}_{m,n} L^{m+1,n+1}_{m,n+1},
\] so there is no ambiguity in defining coherent systems of probability measures on $\GG$.

\begin{thm}\label{thm:double_boundary}
For $t \in (0,1)$, the boundary $\partial \GG$ is in bijection with $\bSig_\infty$. The extreme coherent system $(M^\mu_{m,n})_{m,n \geq 1}$ corresponding to $\mu \in \Sig_\infty$ is determined by
\[
M^\mu_{m,n}(\nu) = \sum_{\lambda \in \Sig_n} M^{\mu}_n(\lambda) M^{\lambda}_{m-n+1}(\nu^*)
\]
for $m \geq n$ and hence for all $m,n$ by coherency. The extreme coherent system corresponding to $\mu \in \Sig_\infty^{(k)}$ is determined by 
\[
M^\mu_{m,n}(\nu) = \sum_{\lambda \in \Sig_k} M^{\mu^*}_{n-k+1}(\lambda) M^\lambda_{m-k+1}(\nu^*)
\]
for $m,n \geq k$ and hence for all $m,n$ by coherency.
\end{thm}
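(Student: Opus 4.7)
The plan is to follow the three-part structure of the proof of \Cref{thm:boundary}, adapted to the two-dimensional graph $\GG$. The key structural observation is that the cotransition probabilities in \eqref{eq:twoD_L} stratify by rank: restricted to $\bSig_\infty^{(k)} \times \bSig_\infty^{(k)}$ with $k \leq m$, they coincide via $\mu \mapsto \mu^*$ with the $\G^{(k)}$-cotransitions $\tL^{m-k+2}_{m-k+1}$, while restricted to $\bSig_\infty^{(m+1)} \times \bSig_\infty^{(m)}$ they coincide with the $\G$-cotransitions $L^{m+1}_m$. This lets us bootstrap the classification of $\partial \GG$ from the one-dimensional results \Cref{thm:boundary} and \Cref{thm:finite_boundary}.

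First I would verify that the stated formulas define coherent probability measures. For $\mu \in \Sig_\infty$ and $m \geq n$, the formula
\[
M^\mu_{m,n}(\nu) = \sum_{\lambda \in \Sig_n} M^\mu_n(\lambda)\, M^\lambda_{m-n+1}(\nu^*)
\]
factors sampling $\nu$ into two stages: first run the $\G$-dynamics from $\mu$ down to level $n$ to produce $\lambda \in \Sig_n$, then run the $\G^{(n)}$-dynamics from $\lambda$ up to level $m-n+1$ to produce $\nu^* \in \Sig_n$; the resulting measure is supported on $\bSig_\infty^{(n)}$. Coherency under the cotransition $L^{m+1,n}_{m,n}$ then reduces (since we are in the first case of \eqref{eq:twoD_L}) to $\G^{(n)}$-coherency of $M^\lambda$. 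Coherency under $L^{m,n+1}_{m,n}$ requires switching from the second case to the first case of \eqref{eq:twoD_L}, and combines the $\G$-coherency of $M^\mu$ with an application of the skew Cauchy identity \Cref{thm:finite_cauchy} to interchange the two sums. The case $\mu \in \bSig_\infty^{(k)}$ is analogous, using only $\G^{(k)}$-coherency in both directions, and the needed symmetry $M^\mu_{m,n} = M^\mu_{n,m}$ follows because the two compositions $\GG(m,n) \to \GG(k,k)$ and $\GG(n,m) \to \GG(k,k)$ (with $k = \min(m,n)$) both agree with the single joint formula at the corner.

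Second, I would show via the Vershik-Kerov ergodic method that every extreme coherent system is of this form. The general ergodic theorem \cite[Theorem 6.1]{okounkov1998asymptotics}, applied to $\GG$ viewed as a graded graph with grading $m+n$, yields for any extreme coherent system $(M_{m,n})$ a regular sequence $(\mu(m,n)) \subset \bSig_\infty$ whose cotransitions $L^{m,n}_{k,k}(\mu(m,n), \cdot)$ converge to $M_{k,k}$. Mirroring the proof of \Cref{thm:all_boundary1}, I would establish that the finite parts of $\mu(m,n)$ are uniformly bounded above (using the explicit formulas of \Cref{thm:link_explicit} combined with bounds on the terminating $q$-hypergeometric factors) and bounded below (using an analogue of \Cref{thm:measure_vanishing} that forces the cotransition to vanish when the ranks are incompatible). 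A diagonal subsequence then extracts a limit $\mu \in \bSig_\infty$ (with some number $k \in \{0,1,\ldots\} \cup \{\infty\}$ of finite parts), and applying \Cref{thm:conv_means_regular} successively in the $m$- and $n$-directions identifies the limit with the explicit $M^\mu$.

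Extremality of each $(M^\mu_{m,n})$ is then shown as in \Cref{thm:all_boundary2}: any convex decomposition over $\partial \GG$ pushes forward to a Borel measure on $\bSig_\infty$, and the non-vanishing of $M^\mu_{m,n}$ at an appropriate truncation of $\mu$ together with the support-vanishing property forces the decomposing measure to be a Dirac mass at $\mu$. The main technical obstacle I anticipate lies in Part two, where one must control the interaction between the two rank strata of $\bSig_\infty$: a regular sequence $\mu(m,n)$ whose number of finite parts fluctuates with $m,n$ must be shown to stabilize in the limit to a single type of boundary point, which requires a genuinely two-dimensional refinement of the \Cref{thm:measure_vanishing} argument that tracks how the rank of $\mu(m,n)^*$ is allowed to shift along paths in $\GG$ (specifically, the rank can decrease only along the diagonal strip where it equals $\min(m,n)$).
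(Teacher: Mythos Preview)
Your approach differs substantially from the paper's. You propose a direct two-dimensional Vershik--Kerov analysis: grade $\GG$ by $m+n$, extract a regular sequence in $\bSig_\infty$, and control its rank fluctuations along the way. The paper instead performs an iterated one-dimensional reduction. First, for each fixed $n$, it classifies the boundary of the row subgraph $\bigsqcup_{m \geq n} \GG(m,n)$: since the links \eqref{eq:twoD_L} preserve rank once $k \leq m$, any coherent system on this subgraph splits as a convex combination over $0 \leq k \leq n$ of systems supported on $\bSig_\infty^{(k)}$, and \Cref{thm:finite_boundary} identifies the extreme points of each stratum, so the boundary of the row subgraph is $\bSig_n$. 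Second, it checks (via \Cref{thm:finite_cauchy} and the explicit formula \eqref{eq:finite_coherent_explicit}) that the induced Markov kernel $\cM(\bSig_n) \to \cM(\bSig_{n-1})$ between these row-boundaries is again $L^{m,n}_{m,n-1}$. This collapses $\partial \GG$ to the boundary of a single one-dimensional graph $\bigsqcup_{n \geq 1} \bSig_n$, which is read off from \Cref{thm:boundary} (for systems supported on $\Sig_\infty$) and \Cref{thm:finite_boundary} (for each finite-rank stratum).

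The paper's route entirely avoids the obstacle you flag. By fixing $n$ first, rank is bounded by $n$ and the stratification is a finite convex decomposition, so no two-dimensional rank-stabilization argument is ever needed. Your plan is not wrong in spirit, but it would amount to reproving \Cref{thm:boundary} and \Cref{thm:finite_boundary} inside a harder two-dimensional framework; in particular, the issue you isolate---showing that the number of finite parts of $\mu(m,n)$ stabilizes along a regular sequence graded by $m+n$---is real and would require a new argument beyond the one-dimensional tools you cite (\Cref{thm:conv_means_regular} and \Cref{thm:measure_vanishing} are stated for $\G$ and $\G^{(k)}$ separately, not for mixed sequences crossing rank strata).
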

\begin{proof}
First note that every coherent system on $\GG$ is determined by a sequence of coherent systems on the subgraphs with vertex sets
\begin{equation}\label{eq:row_subgraph}
    \bigsqcup_{m \geq n} \GG(m,n)
\end{equation}
for $n \geq 1$, which are themselves coherent with one another under the links $L^{m,n+1}_{m,n}$. By the definition of the cotransition probabilities \eqref{eq:twoD_L}, a coherent system on \eqref{eq:row_subgraph} must decompose as a convex combination of $n+1$ coherent systems, each one having all measures supported on $\bSig_\infty^{(k)}$ for $0 \leq k \leq n$. Hence extreme coherent systems on \eqref{eq:row_subgraph} are parametrized by $\bSig_k$ by applying \Cref{thm:finite_boundary} for each $k$. 

It follows by the above-mentioned commutativity $L^{m+1,n}_{m,n}L^{m+1,n+1}_{m+1,n} = L^{m,n+1}_{m,n} L^{m+1,n+1}_{m,n+1}$ that given a coherent system $(M_m)_{m \geq n}$ on the graph \eqref{eq:row_subgraph}, $(M_m L^{m,n}_{m,n-1})_{m \geq n}$ is a coherent system on 
\[
\bigsqcup_{m \geq n} \GG(m,n-1).
\]
Since $L^{m,n}_{m,n-1}$ takes coherent systems to coherent systems, by decomposing these into extreme coherent systems it induces a map  $\cM(\bSig_n) \to \cM(\bSig_{n-1})$ between spaces of probability measures on the respective boundaries, i.e. a Markov kernel. It follows from the explicit formulas \eqref{eq:finite_coherent_explicit}, \eqref{eq:twoD_L} and the Cauchy identity \Cref{thm:finite_cauchy} that this Markov map is itself given by $L^{m,n}_{m,n-1}$ on the appropriately restricted domain, after identifying $\bSig_n$ and $\bSig_{n-1}$ as subsets of $\bSig_\infty$ in the obvious way. 

Hence $\partial \GG$ is in bijection with coherent systems on the graph with vertex set
\[
\bigsqcup_{n \geq 1} \bSig_n
\]
and edges between $n\tth$ and $(n-1)^{\text{st}}$ level given by $L^{m,n}_{m,n-1}$ for any $m \geq n$ (note the these links are independent of $m \geq n$ by \eqref{eq:twoD_L}). The boundary of this graph is classified by $\bSig_\infty$ by combining \Cref{thm:boundary} (for coherent systems supported on $\Sig_\infty$) and \Cref{thm:finite_boundary} (for coherent systems supported on $\bSig_\infty^{(k)}$), and the explicit coherent systems in the statement follow from the above computations.
\end{proof}

\begin{proof}[Proof of {\Cref{thm:recover_BQ}}]
Any $\GL_\infty(\Z_p) \times \GL_\infty(\Z_p)$-invariant measure on $\Mat_{\infty \times \infty}(\Q_p)$ is uniquely determined by its marginals on $m \times n$ truncations for finite $m,n$, which are each $\GL_m(\Z_p) \times \GL_n(\Z_p)$-invariant. The $\GL_n(\Z_p) \times \GL_m(\Z_p)$-invariant probability measures on $\Mat_{n \times m}(\Q_p)$ are in bijection with probability measures on $\bSig_\infty$ supported on signatures with at most $\min(m,n)$ finite parts. Hence removing a row (resp. column) induces a Markov kernel $\cM(\bSig_\infty) \to \cM(\bSig_\infty)$, and by \Cref{thm:p-adic_corners} this Markov kernel is exactly $L^{m,n}_{m,n-1}$ (resp. $L^{m,n}_{m-1,n}$). Hence \Cref{thm:double_boundary} yields that the set of extreme $\GL_\infty(\Z_p) \times \GL_\infty(\Z_p)$-invariant measures on $\Mat_{\infty \times \infty}(\Q_p)$ is in bijection with $\bSig_\infty$. Here the measure $\EM_\mu$ corresponding to $\mu$ is determined by the fact that each $m \times n$ corner has singular numbers distributed by the measure $M_{m,n}^\mu$ in \Cref{thm:double_boundary}.
\end{proof}

We have shown that the extreme bi-invariant measures are parametrized somehow by $\bSig_\infty$, but in \cite{bufetov2017ergodic} the measure corresponding to a given $\mu \in \bSig_\infty$ is defined quite differently, and it is not at all clear a priori that it is the same as our measure $\EM_\mu$. Let us describe these measures.

In the finite or infinite setting, there are two natural families of random matrices in $\Mat_{n \times m}(\Q_p)$ which are invariant under the natural action of $\GL_n(\Z_p) \times \GL_m(\Z_p)$:
\begin{itemize}
    \item (Haar) $p^{-k} Z$, where $k \in \Z \cup \{-\infty\}$ and $Z$ has iid entries distributed by the additive Haar measure on $\Z_p$.
    \item (Nonsymmetric Wishart-type) $p^{-k}X^T Y$, where $X \in \Z_p^n, Y \in \Z_p^m$ have iid additive Haar entries. 
\end{itemize}
One can of course obtain invariant measures by summing the above random matrices, which motivates the following class of measures. 

\begin{defi}\label{def:emu}
Let $\mu \in \bSig_\infty$, and let $\mu_\infty := \lim_{\ell \to \infty} \mu_\ell \in \Z \cup \{-\infty\}$. Let $X^{(\ell)}_i, Y^{(\ell)}_j, Z_{ij}$ be iid and distributed by the additive Haar measure on $\Z_p$ for $i,j,\ell \geq 1$. Then we define the measure $\tE_\mu$ on $\Mat_{\infty \times \infty}(\Q_p)$ as the distribution of the random matrix
\[
\left(\sum_{\ell: \mu_\ell > \mu_\infty} p^{-\mu_\ell} X^{(\ell)}_i Y^{(\ell)}_j + p^{-\mu_\infty} Z_{ij}\right)_{i,j \geq 1}.
\]
\end{defi}

It is shown in \cite[Theorem 1.3]{bufetov2017ergodic} that the $\tE_\mu, \mu \in \bSig_\infty$ are exactly the extreme $\GL_\infty(\Z_p) \times \GL_\infty(\Z_p)$-invariant measures on $\Mat_{\infty \times \infty}(\Q_p)$.

\begin{prop}\label{thm:emu=temu}
For any $\mu \in \bSig_\infty$, $\tE_\mu = \EM_\mu$.
\end{prop}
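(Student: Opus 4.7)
Since $\tE_\mu$ is $\GL_\infty(\Z_p)^2$-invariant and extreme by \cite{bufetov2017ergodic}, Theorem \ref{thm:recover_BQ} guarantees that $\tE_\mu = \EM_{\nu(\mu)}$ for a unique $\nu(\mu) \in \bSig_\infty$; the task is to show $\nu(\mu) = \mu$. Under $\EM_{\nu(\mu)}$, the $N \times N$ corner has singular numbers distributed as $M^{\nu(\mu)}_{N,N}$ by Theorem \ref{thm:double_boundary}, and Lemma \ref{thm:conv_means_regular} together with the explicit coherent systems of Theorems \ref{thm:boundary} and \ref{thm:finite_boundary} recover $\nu(\mu)$ from the large-$N$ behavior of this distribution. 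The plan is therefore to compute the asymptotic behavior of $\ESN(A_N)$ directly for $A \sim \tE_\mu$ and read off $\mu$.

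Concretely, I would establish the following almost-sure statement: setting $K = \#\{\ell : \mu_\ell > \mu_\infty\}$ and fixing any finite $k \le K$, for almost every $A \sim \tE_\mu$ there exists $N_0$ such that for all $N \ge N_0$ the top $k$ parts of $\ESN(A_N)$ are exactly $\mu_1,\ldots,\mu_k$. For $N$ large, each rank-one summand $p^{-\mu_\ell} X^{(\ell),N}(Y^{(\ell),N})^T$ almost surely has a unit coordinate in both factors (by Borel--Cantelli, since coordinates are iid additive Haar on $\Z_p$), so contributes singular numbers $(\mu_\ell,-\infty,\ldots,-\infty)$; the remaining contributions (rank-one pieces with $\mu_\ell < \mu_k$ together with the Haar tail $p^{-\mu_\infty} Z_N$) all have entries of $p$-adic absolute value at most $p^{\mu_k - 1}$. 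A Smith normal form analysis via elementary divisors $D_i(A_N) = \gcd$ of $i \times i$ minors then shows that for $1 \le i \le k$, $D_i(A_N)$ coincides with $D_i$ of the top-$k$ rank-one part up to a $p$-adic unit, since the lower-order correction terms are bounded by a strictly larger negative power of $p$ (using ultrametricity). Combining this concentration with the distributional identity $\ESN(A_N) \sim M^{\nu(\mu)}_{N,N}$ forces $\nu(\mu)_i = \mu_i$ for $1 \le i \le k$; letting $k$ range over all admissible values pins down the finite parts of $\nu(\mu)$. The tail $\nu(\mu)_i = \mu_\infty$ for $i > K$ then follows from analyzing the complementary singular numbers, which, after subtracting out the top-$K$ rank-one contributions, are those of an iid Haar matrix at scale $p^{-\mu_\infty}$; the associated ergodic measure has parameter $(\mu_\infty, \mu_\infty, \ldots)$.

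The main obstacle is the Smith-form analysis, particularly when $\mu$ has infinitely many distinct finite parts accumulating at $\mu_\infty$ from above: in that case the rank-one decomposition of $A$ is an infinite sum and the uniform bound on the tail does not hold directly. I would address this by approximating $\mu$ by the truncation $\mu^{[k]} = (\mu_1, \ldots, \mu_k, \mu_\infty, \mu_\infty, \ldots)$, for which $\tE_{\mu^{[k]}}$ is explicit and the argument above applies verbatim, and then show that the corner distributions of $\tE_{\mu^{[k]}}$ converge to those of $\tE_\mu$ as $k \to \infty$. Convergence on the $\EM$ side is controlled by the monotone stabilization of the measures $M^\mu_n$ recorded in Corollary \ref{thm:measure_and_monotone}, and the two approximations can be matched up to conclude $\nu(\mu) = \mu$ in general.
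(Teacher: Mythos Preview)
Your approach is genuinely different from the paper's and considerably more laborious. The paper does not use any almost-sure convergence or Smith normal form analysis at all. Instead, it argues by a support comparison on $k \times k$ corners: if $\tE_\mu = \EM_\nu$ with $\mu \neq \nu$, pick the smallest $k$ with $\mu_k \neq \nu_k$ and look at the distribution of $\SN$ of the $k \times k$ corner. From \Cref{def:emu} and the ultrametric inequality one sees directly that this pushforward under $\tE_\mu$ is supported on $\{\lambda \in \bSig_k : \lambda_i \le \mu_i\}$ and hits $(\mu_1,\ldots,\mu_k)$ with positive probability; from \Cref{thm:double_boundary} and \Cref{thm:measure_vanishing} the same holds for $\EM_\nu$ with $\nu$ in place of $\mu$. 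These two support statements are incompatible unless $\mu_k = \nu_k$. This is a two-paragraph argument requiring no asymptotics.

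Your route can be made to work, but two points need shoring up. First, the sentence ``\Cref{thm:conv_means_regular} \ldots recover[s] $\nu(\mu)$ from the large-$N$ behavior of this distribution'' is not quite what that lemma says: it concerns convergence of links $L^n_k(\mu(n),\cdot)$ for a sequence $\mu(n)$, not concentration of $M^{\nu}_{N,N}$ at $\nu$. To close the loop you would need the companion statement that under $\EM_\nu$ the top singular numbers of the $N \times N$ corner concentrate at $(\nu_1,\ldots,\nu_k)$; the cleanest way to get this is precisely the support-plus-positive-mass argument the paper uses, at which point the almost-sure analysis becomes redundant. Second, your Smith-form step tacitly assumes $\mu_{k+1} < \mu_k$ so that the tail is bounded by $p^{\mu_k - 1}$; you should restrict to $k$ at which $\mu$ has a strict drop, which suffices for the identification. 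The truncation workaround for infinitely many finite parts is reasonable, but note that ``matching the two approximations'' at the end is again most easily done via the paper's support argument rather than a separate limiting computation.
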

\begin{proof}
By combining \Cref{thm:recover_BQ} with the result \cite[Theorem 1.3]{bufetov2017ergodic} that the $\tE_\mu$ are exactly the extreme measures, we have that $\{\tE_\mu: \mu \in \bSig_\infty\} = \{\EM_\mu: \mu \in \bSig_\infty\}$. Hence for each $\mu \in \bSig_\infty$ there exists $\nu \in \bSig_\infty$ such that $\tE_\mu = \EM_\nu$. Suppose for the sake of contradiction that $\nu \neq \mu$. Let $k \geq 1$ be the smallest index for which $\mu_k \neq \nu_k$, let 
\[
f: \Mat_{\infty \times \infty}(\Q_p) \to \bSig_k
\]
be the map to the first $k$ singular numbers of the top left $k \times k$ corner, and let 
\[
S^{(k)}_{\leq \mu} := \{\lambda \in \bSig_k: \lambda_i \leq \mu_i \text{ for }1 \leq i \leq k\}.
\]
We claim that 
\begin{equation}
    \label{eq:E_forward}
    f_*(\tE_\mu) \text{ is supported on }S^{(k)}_{\leq \mu} \text{ and }(f_*(\tE_\mu))(\mu_1,\ldots,\mu_k) > 0,
\end{equation}
and 
\begin{equation}
    \label{eq:tE_forward}
    f_*(\EM_\nu) \text{ is supported on }S^{(k)}_{\leq \nu}\text{ and }(f_*(\EM_\nu))(\nu_1,\ldots,\nu_k) > 0. 
\end{equation}

The first, \eqref{eq:E_forward}, follows straightforwardly from \Cref{def:emu}, while \eqref{eq:tE_forward} follows from \Cref{thm:double_boundary} and \Cref{thm:measure_vanishing}. 

If $\mu_k > \nu_k$, then $\Supp(f_*(\tE_\mu)) \supsetneq \Supp(f_*(\EM_\nu))$, while if $\mu_k < \nu_k$ then $\Supp(f_*(\tE_\mu)) \subsetneq \Supp(f_*(\EM_\nu))$, contradicting the claim $f_*(\tE_\mu) = f_*(\EM_\nu)$. Therefore there does not exist $k$ as above, so $\mu = \nu$, completing the proof.
\end{proof}

Combining \Cref{thm:emu=temu} with \Cref{thm:recover_BQ} in fact provides a (quite indirect!) computation of the singular numbers of $m \times n$ truncations of the infinite matrices in \Cref{def:emu}. 

\begin{cor}\label{thm:computed_sns}
The singular numbers of an $m \times n$ corner of an infinite matrix with distribution $\tE_\mu$ are distributed by the measure $M_{m,n}^\mu$ of \Cref{thm:double_boundary}. 
\end{cor}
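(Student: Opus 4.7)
The plan is that this corollary is essentially a one-line deduction, combining the characterization of $\EM_\mu$ given in \Cref{thm:recover_BQ} with the identification $\tE_\mu = \EM_\mu$ just established in \Cref{thm:emu=temu}. I would not need any new technical machinery beyond what has already been developed.

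More precisely, first I would recall the defining property of $\EM_\mu$: by construction in the proof of \Cref{thm:recover_BQ}, $\EM_\mu$ is the unique $\GL_\infty(\Z_p) \times \GL_\infty(\Z_p)$-invariant measure on $\Mat_{\infty \times \infty}(\Q_p)$ whose $m \times n$ corner is distributed by the unique bi-invariant measure on $\Mat_{m \times n}(\Q_p)$ with singular numbers following the distribution $M^\mu_{m,n}$ from \Cref{thm:double_boundary}. This is simply how $\EM_\mu$ was defined in the statement of \Cref{thm:recover_BQ}.

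Next, \Cref{thm:emu=temu} states $\tE_\mu = \EM_\mu$ as measures on $\Mat_{\infty \times \infty}(\Q_p)$. Since the distribution of the singular numbers of the top-left $m \times n$ corner is a functional of the joint distribution of the entries of the infinite matrix, the two distributions on $\bSig_{\min(m,n)}$ (or equivalently on $\bSig_\infty$ after padding with $-\infty$) must coincide. Therefore the singular numbers of the $m \times n$ corner of a matrix with distribution $\tE_\mu$ are distributed according to $M^\mu_{m,n}$.

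There is no real obstacle here; the work has already been done in \Cref{thm:recover_BQ} and \Cref{thm:emu=temu}. The only minor bookkeeping point is that the measure $\tE_\mu$ from \Cref{def:emu} is presented as a concrete pushforward of Haar measures, so the content of the corollary is that this concrete description of the infinite matrix allows one to read off the joint distribution of singular numbers of finite truncations via the explicit formulas from \Cref{thm:double_boundary}---a computation which is not at all obvious directly from \Cref{def:emu} without going through the boundary classification. The proof itself, however, reduces to invoking the two previous results.
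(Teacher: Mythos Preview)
Your proposal is correct and matches the paper's approach exactly: the paper states the corollary immediately after \Cref{thm:emu=temu}, noting that combining \Cref{thm:emu=temu} with \Cref{thm:recover_BQ} provides the computation, which is precisely the two-step deduction you outline.
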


It seems possible that the summation which defines the measures $M_{m,n}^\mu$ may be simplified to get more explicit formulas for the above distributions, though we do not address this question here. 

\begin{rmk}\label{rmk:BQ_differences}
There are several comments on the relation between our setup and that of \cite{bufetov2017ergodic} which are worth highlighting:
\begin{itemize}
    \item We work over $\Q_p$ while \cite{bufetov2017ergodic} works over an arbitrary non-Archimedean local field $F$. Such a field has a ring of integers $\mathcal{O}_F$ playing the role of $\Z_p$ and a uniformizer $\omega$ playing the role of $p$, and a finite residue field $\mathcal{O}_F/\omega \mathcal{O}_F \cong \F_q$. Our results transfer mutatis mutandis to this setting with $t=1/q$, as the only needed input \Cref{thm:p-adic_corners} transfers in view of \cite[Remark 4]{van2020limits}. 
    \item While we simply prove a bijection, a short additonal argument shows that the space of extreme invariant measures on $\Mat_{\infty \times \infty}(\Q_p)$ is homeomorphic to $\bSig_\infty$ with natural topologies on both spaces, see the proof of Theorem 1.3 of \cite{bufetov2017ergodic} for details. 
    \item We have used the language of extreme and ergodic measures interchangeably, but for an explanation of how the extreme measures are exactly the ergodic ones in the conventional sense, for this problem and more general versions, see \cite[Section 2.1]{bufetov2017ergodic}.
\end{itemize}
\end{rmk}

\begin{rmk}\label{rmk:symm_HL_proof}
As mentioned in the Introduction, \cite{bufetov2017ergodic} also classify extreme measures on infinite symmetric matrices $\Sym(\N,\Q_p):=\{A \in \Mat_{\infty \times \infty}(\Q_p): A^T = A\}$ invariant under the action of $\GL_\infty(\Z_p)$ by $(g,A) \mapsto gAg^T$. The statement is more involved, essentially due to the fact that the $\GL_n(\Z_p)$-orbits on $\Sym(n,\Q_p)$ are parametrized by their singular numbers together with additional data, unlike the $\GL_n(\Z_p) \times \GL_m(\Z_p)$-orbits on $\Mat_{n \times m}(\Z_p)$. To pursue a similar strategy to our proof of \Cref{thm:recover_BQ} one would need an analogue of \Cref{thm:p-adic_corners}, i.e. a result giving the distribution of the $\GL_{n-1}(\Z_p)$-orbit of an $(n-1) \times (n-1)$ corner of an $n \times n$ symmetric matrix drawn uniformly from a fixed $\GL_n(\Z_p)$-orbit. Given that the parametrization of these orbits involves more data than the (extended) signature specifying their singular numbers, it is not immediately clear how the answer would be expressed in terms of Hall-Littlewood polynomials.

We do however expect a solution in terms of Hall-Littlewood polynomials to a related problem which is coarser. The problem is to find the distribution of just the singular numbers, rather than $\GL_{n-1}(\Z_p)$-orbits, of an $(n-1) \times (n-1)$ corner of a random element of $\Sym(n,\Q_p)$ with fixed singular numbers and $\GL_n(\Z_p)$-invariant distribution. The existence of such a result is suggested by a known expression for the singular numbers of an $n \times n$ symmetric matrix with iid (apart from the symmetry constraint) entries distributed by the additive Haar measure on $\Z_p$. This distribution was computed in \cite{clancy2015cohen}, and shown to be equivalent to a measure coming from one of the so-called Littlewood identities for Hall-Littlewood polynomials in \cite{fulman2016hall}. It seems natural that a solution to this problem could be augmented with the extra data required to parametrize $\GL_n(\Z_p)$-orbits, answering the question of the previous paragraph. We have not attempted to pursue this direction.
\end{rmk}

\section{Ergodic decomposition of {$p$}-adic Hua measures} \label{sec:phua}

We now define a special family of measures on $\Mat_{\infty \times \infty}(\Q_p)$, the \emph{$p$-adic Hua measures}, introduced in \cite{neretin2013hua}. Their decomposition into the ergodic measures $\tE_\mu$ of \Cref{def:emu} was computed in \cite{assiotis2020infinite}. We will rederive that result, showing in the process that the $p$-adic Hua measures have a natural interpretation in terms of measures on partitions derived from Hall-Littlewood polynomials.

\begin{defi}\label{def:pos_part_sig}
For $\lambda \in \Sig_n$, we set 
\[
\lambda^+ := (\max(\lambda_1,0),\ldots,\max(\lambda_n,0)) \in \Sig_n^{\geq 0}.
\]
\end{defi}

\begin{defi}\label{def:p-hua}
The $p$-adic Hua measure $\bbM_n^{(s)}$ on $\Mat_{n \times n}(\Q_p)$ is defined by 
\[
d\bbM_n^{(s)}(A) = \frac{(p^{-1-s};p^{-1})_n^2}{(p^{-1-s};p^{-1})_{2n}} p^{|\ESN(A)^+|(-s-2n)} d\M^{(n)}(A),
\]
where $\M^{(n)}$ is the product over all $n^2$ matrix entries of the additive Haar measure $\M$ on $\Q_p$.
\end{defi}

The following computation of the distribution of the singular numbers of $\bbM_n^{(s)}$ is done in \cite[Proposition 3.1]{assiotis2020infinite}, using \Cref{def:p-hua} and results of \cite[Chapter V]{mac}.

\begin{prop}\label{thm:finite_phua_SNs}
The pushforward of $\bbM_n^{(s)}$ under $\SN: \Mat_{n \times n}(\Q_p) \to \bSig_n$ is supported on $\Sig_n$ and given by
\[
\left(\SN_*(\bbM_n^{(s)})\right)(\lambda) = \frac{(u;t)_n^2}{(u;t)_{2n}} u^{|\lambda^+|} t^{(2n-1)(|\lambda^+| - |\lambda|) + 2n(\lambda)} \frac{(t;t)_n^2}{\prod_{x \in \Z} (t;t)_{m_x(\lambda)}},
\]
where as usual $t=1/p$, and $u=t^{1+s}$.
\end{prop}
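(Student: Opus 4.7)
The plan is to factor the pushforward into the Hua density (which depends on $A$ only through $\SN(A)$) times the additive Haar volume of each $\GL_n(\Z_p) \times \GL_n(\Z_p)$-orbit. Because $\bbM_n^{(s)} \ll \M^{(n)}$ is supported on invertible matrices, $\SN(A) \in \Sig_n$ almost surely, and one can immediately write
\[
\SN_*(\bbM_n^{(s)})(\lambda) = \frac{(u;t)_n^2}{(u;t)_{2n}} \cdot t^{(s+2n)|\lambda^+|} \cdot V(\lambda), \qquad V(\lambda) := \M^{(n)}(\{A : \SN(A) = \lambda\}),
\]
with $u = t^{1+s}$. The real task reduces to computing the orbital volume $V(\lambda)$.

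First I would treat the case of $\lambda \in \Sig_n$ with all parts nonpositive, in which the orbit lies in $\Mat_n(\Z_p)$ and $V(\lambda)$ is precisely the probability that an iid additive Haar matrix in $\Mat_n(\Z_p)$ has singular numbers $\lambda$. Setting $\mu_i = -\lambda_{n+1-i}$ to obtain a partition, one has $\coker(A) \cong G_\mu(p)$, and the Friedman-Washington formula \eqref{eq:fw_intro} applies directly. Translating using the identities $n(\lambda) = n(\mu) - (n-1)|\mu|$, $|\lambda| = -|\mu|$, and $m_x(\lambda) = m_{-x}(\mu)$ recasts \eqref{eq:fw_intro} in signature language as
\[
V(\lambda) = t^{2n(\lambda) - (2n-1)|\lambda|} \frac{(t;t)_n^2}{\prod_{x \in \Z}(t;t)_{m_x(\lambda)}}.
\]

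For general $\lambda \in \Sig_n$, I would propagate the formula using the scaling identity $\{A : \SN(A) = \lambda\} = p^{-c}\{A : \SN(A) = \lambda - c[n]\}$ for any $c \in \Z$, combined with the rescaling $\M^{(n)}(p^{-c} S) = t^{-c n^2}\M^{(n)}(S)$ of the additive Haar measure. Choosing $c \geq \lambda_1$ reduces to the previous case; the translation invariance of the multiplicities $m_x$, together with $n(\lambda - c[n]) = n(\lambda) - c \binom{n}{2}$ and $|\lambda - c[n]| = |\lambda| - cn$, conspires so that all $c$-dependent contributions in the exponent of $t$ cancel, and hence the same formula for $V(\lambda)$ extends to all of $\Sig_n$.

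Finally, assembling the pieces and rewriting $t^{(s+2n)|\lambda^+|} = u^{|\lambda^+|} t^{(2n-1)|\lambda^+|}$ combines with the $V(\lambda)$ formula to give the exponent $u^{|\lambda^+|} t^{(2n-1)(|\lambda^+|-|\lambda|) + 2n(\lambda)}$, matching the claimed expression. The main obstacle here is purely bookkeeping — tracking the sign conventions in the reduction from a partition-indexed Friedman-Washington statement to a signature-indexed one, and verifying the $c$-cancellation in the scaling step — but no genuinely new analytic input is needed beyond \eqref{eq:fw_intro} and the scaling law of the additive Haar measure.
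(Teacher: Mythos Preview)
Your proposal is correct. The paper itself does not prove this proposition but cites \cite[Proposition 3.1]{assiotis2020infinite}, describing the argument there as combining \Cref{def:p-hua} with orbit-volume formulas from \cite[Chapter V]{mac}. Structurally your approach is the same---the Hua density is constant on $\GL_n(\Z_p)\times\GL_n(\Z_p)$-orbits, so the pushforward is (density) $\times$ (orbital Haar volume)---but you source the orbital volume from the Friedman--Washington formula \eqref{eq:fw_intro} (for nonpositive $\lambda$) together with the scaling law $\M^{(n)}(p^{-c}S)=t^{-cn^2}\M^{(n)}(S)$, rather than quoting Macdonald directly. Since \eqref{eq:fw_intro} is itself equivalent to the relevant orbit-volume statement, this is a cosmetic difference; your translations $n(\mu)=n(\lambda)-(n-1)|\lambda|$, the $c$-cancellation, and the final recombination $t^{(s+2n)|\lambda^+|}=u^{|\lambda^+|}t^{(2n-1)|\lambda^+|}$ are all correct.
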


We may now prove the main result, which we recall. Note that by \Cref{thm:emu=temu} the same result holds with $E_\mu$ replaced by $\tE_\mu$, and it is the latter version which was proven in \cite{assiotis2020infinite}.

\recoverA*
\begin{proof}
The $p$-adic Hua measure is uniquely determined by its projections to $n \times n$ corners, and by extremality of the measures $E_\mu$ any decomposition into a convex combination of them is unique. Hence it suffices to show that a matrix $A$, distributed by the measure on $\Mat_{\infty \times \infty}(\Q_p)$ described by RHS\eqref{eq:phua_decomp}, has $n \times n$ corners given by the finite $p$-adic Hua measure $\bbM_n^{(s)}$. By \Cref{thm:finite_phua_SNs} and \Cref{thm:recover_BQ}, it suffices to show
\begin{align} \label{eq:assiotis_hl_suffices}
\begin{split}
    &\sum_{\mu \in \Y} \frac{P_\mu(1,t,\ldots) Q_\mu(u,ut,\ldots)}{\Pi(1,\ldots;u,\ldots)} \sum_{\lambda \in \Sig_n^{\geq 0}} \frac{P_{\mu/\lambda}(t^n,\ldots)P_\lambda(1,\ldots,t^{n-1})}{P_\mu(1,t,\ldots)} Q_{-\nu/-\lambda}(t,t^2,\ldots) \\
    \times &\frac{P_{-\nu}(1,\ldots,t^{n-1})}{P_{-\lambda}(1,\ldots,t^{n-1}) \Pi(1,\ldots,t^{n-1};t,\ldots)} = \frac{(u;t)_n^2}{(u;t)_{2n}} u^{|\nu^+|} t^{(2n-1)(|\nu^+| - |\nu|) + 2n(\nu)} \frac{(t;t)_n^2}{\prod_{x \in \Z} (t;t)_{m_x(\nu)}}
\end{split}
\end{align}
The proof is a surprisingly long series of applications of the Cauchy identity/branching rule and principal specialization formulas. We first cancel the $P_\mu(1,\ldots)$ factors and apply the Cauchy identity \eqref{eq:infinite_cauchy} to the sum over $\mu$ to obtain
\begin{multline}\label{eq:hl_intermediate_1}
    \frac{P_{-\nu}(1,\ldots,t^{n-1})}{\Pi(1,\ldots;u,\ldots) \Pi(1,\ldots,t^{n-1}; t,\ldots)} \\
    \times \sum_{\lambda \in \Sig_n^{\geq 0}} \frac{P_\lambda(1,\ldots,t^{n-1})}{P_{-\lambda}(1,\ldots,t^{n-1})} Q_{-\nu/-\lambda}(t,\ldots) Q_{\lambda/(0[n])}(u,\ldots) \Pi(t^n,\ldots;u,\ldots).
\end{multline}
Using that 
\[
\Pi(1,\ldots,t^{n-1};t,\ldots) = (t;t)_n,
\]
and 
\[
P_{-\nu}(1,\ldots,t^{n-1}) = P_\nu(1,\ldots,t^{-(n-1)}) = t^{-(n-1)|\nu|} P_\nu(1,\ldots,t^{n-1})
\]
and similarly for $\lambda$, \eqref{eq:hl_intermediate_1} becomes 
\begin{equation}\label{eq:hl_intermediate_2}
    \frac{(t;t)_n P_\nu(1,\ldots,t^{n-1}) t^{(n-1)(|\lambda|-|\nu|)}}{\Pi(1,\ldots,t^{n-1}; u,\ldots)}\sum_{\lambda \in \Sig_n^{\geq 0}} Q_{-\nu/-\lambda}(t,\ldots)Q_{\lambda/(0[n])}(u,\ldots).
\end{equation}
It follows from the explicit branching rule \Cref{def:skew_fns} and the principal specialization formula \Cref{thm:hl_principal_formulas} for $P$ that
\begin{equation}\label{eq:Q_negate_sigs}
    Q_{-\nu/-\lambda}(x) = Q_{\lambda/\nu}(x) \frac{t^{-n(\lambda)}P_\lambda(1,\ldots,t^{n-1})}{t^{-n(\nu)}P_\nu(1,\ldots,t^{n-1})}.
\end{equation}
By definition of skew $Q$ functions \eqref{eq:Q_negate_sigs} immediately extends to 
\begin{equation*}
    Q_{-\nu/-\lambda}(x_1,\ldots,x_k) = Q_{\lambda/\nu}(x_1,\ldots,x_k) \frac{t^{-n(\lambda)}P_\lambda(1,\ldots,t^{n-1})}{t^{-n(\nu)}P_\nu(1,\ldots,t^{n-1})}
\end{equation*}
for any $k$, hence to an equality of symmetric functions and hence specializes to 
\begin{equation}\label{eq:Q_negate_sigs_spec}
    Q_{-\nu/-\lambda}(t^n,\ldots) = Q_{\lambda/\nu}(t^n,\ldots) \frac{t^{-n(\lambda)}P_\lambda(1,\ldots,t^{n-1})}{t^{-n(\nu)}P_\nu(1,\ldots,t^{n-1})}.
\end{equation}
By first absorbing the $t^{(n-1)(|\lambda|-|\nu|)}$ into $Q_{-\nu/-\lambda}$ in \eqref{eq:hl_intermediate_2} and then substituting \eqref{eq:Q_negate_sigs_spec} and simplifying $Q_{\lambda/(0[n])}$ via \Cref{thm:hl_principal_formulas}, \eqref{eq:hl_intermediate_2} becomes
\begin{align}\label{eq:hl_intermediate_3}
\begin{split}
    &\frac{(t;t)_n P_\nu(1,\ldots,t^{n-1})}{\Pi(1,\ldots,t^{n-1};u,\ldots)} \sum_{\lambda \in \Sig_n^{\geq 0}} Q_{\lambda/\nu}(t^n,\ldots) \frac{t^{-n(\lambda)}P_\lambda(1,\ldots,t^{n-1})}{t^{-n(\nu)}P_\nu(1,\ldots,t^{n-1})} u^{|\lambda|} t^{n(\lambda)} \\
    &=\frac{(t;t)_n t^{n(\nu)}}{\Pi(1,\ldots,t^{n-1};u,\ldots)} \sum_{\lambda \in \Sig_n^{\geq 0}} Q_{\lambda/\nu}(t^n,\ldots)P_\lambda(u,\ldots,ut^{n-1}).
\end{split}
\end{align}
At first glance, the sum on the RHS of \eqref{eq:hl_intermediate_3} looks like the one in the Cauchy identity \eqref{eq:finite_cauchy}, but there is a nontrivial difference: the sum is over only nonnegative signatures. If $\nu \in \Sig_n^{\geq 0}$ itself, this poses no issue and the Cauchy identity applies directly, but in general this is not the case. 

Luckily, using the explicit formula in \Cref{thm:skew_formula} we may relate the sum in \eqref{eq:hl_intermediate_3} to one to which the Cauchy identity applies. By slightly rearranging terms in \Cref{thm:skew_formula}, we have that for $\lambda \in \Sig_n^{\geq 0}$,
\begin{align}\label{eq:skewQ_special_case}
\begin{split}
    Q_{\lambda/\nu}(t^n,\ldots) &= \frac{t^{n \cdot (|\lambda|-|\nu|)}}{\prod_{x \in \Z} (t;t)_{m_x(\nu)}} \prod_{x \leq 0} (t^{1+n-\nu_x'};t)_{m_x(\nu)} t^{\binom{n-\nu_x'}{2}} \prod_{x > 0} (t^{1+\lambda_x'-\nu_x'};t)_{m_x(\nu)} t^{\binom{\lambda_x'-\nu_x'}{2}} \\
    Q_{\lambda/\nu^+}(t^n,\ldots) &= \frac{t^{n \cdot (|\lambda| - |\nu^+|)}}{\prod_{x > 0}(t;t)_{m_x(\nu)}} \prod_{x > 0} (t^{1+\lambda_x' - \nu_x'};t)_{m_x(\nu)} t^{\binom{\lambda_x'-\nu_x'}{2}}
\end{split}
\end{align}
where $\nu^+$ is the truncation as in \Cref{def:pos_part_sig}. Since
\[
\prod_{x \leq 0} (t^{1+n-\nu_x'};t)_{m_x(\nu)} = (t;t)_{|\{i: \nu_i \leq 0\}|} = (t;t)_{m_0(\nu^+)},
\]
\eqref{eq:skewQ_special_case} implies that
\[
Q_{\lambda/\nu}(t^n,\ldots) = t^{n \cdot(|\nu^+| - |\nu|) + \sum_{x \leq 0} \binom{n-\nu_x'}{2}} \frac{(t;t)_{m_0(\nu^+)}}{\prod_{x \leq 0} (t;t)_{m_x(\nu)}} Q_{\lambda/\nu^+}(t^n,\ldots).
\]
Therefore 
\begin{align}\label{eq:truncate_sig_sum}
\begin{split}
    &\sum_{\lambda \in \Sig_n^{\geq 0}} Q_{\lambda/\nu}(t^n,\ldots)P_\lambda(u,\ldots,ut^{n-1}) \\
    &=t^{n \cdot(|\nu^+| - |\nu|) + \sum_{x \leq 0} \binom{n-\nu_x'}{2}} \frac{(t;t)_{m_0(\nu^+)}}{\prod_{x \leq 0} (t;t)_{m_x(\nu)}} \sum_{\lambda \in \Sig_n^{\geq 0}} Q_{\lambda/\nu^+}(t^n,\ldots) P_\lambda(u,\ldots,ut^{n-1}) \\
    &= t^{n \cdot(|\nu^+| - |\nu|) + \sum_{x \leq 0} \binom{n-\nu_x'}{2}} \frac{(t;t)_{m_0(\nu^+)}}{\prod_{x \leq 0} (t;t)_{m_x(\nu)}} \Pi(t^n,\ldots;u,\ldots,ut^{n-1}) P_{\nu^+}(u,\ldots,ut^{n-1}) \\
    &= \frac{(t;t)_n}{(ut^n;t)_n} \frac{u^{|\nu^+|} t^{n \cdot(|\nu^+| - |\nu|) + \sum_{x \leq 0} \binom{n-\nu_x'}{2} + n(\nu^+)}}{\prod_{x \in \Z} (t;t)_{m_x(\nu)}}
\end{split}
\end{align}
by applying \eqref{eq:finite_cauchy} and \Cref{thm:hl_principal_formulas}. It is an elementary check from the definitions that 
\begin{equation}\label{eq:sig_identity}
    n \cdot(|\nu^+| - |\nu|) + \sum_{x \leq 0} \binom{n-\nu_x'}{2} + n(\nu^+) = (2n-1)(|\nu^+| - |\nu|) + n(\nu).
\end{equation}
Substituting \eqref{eq:sig_identity} into \eqref{eq:truncate_sig_sum} and the result into \eqref{eq:hl_intermediate_3} yields 
\begin{align}
\begin{split}
    &(t;t)_n (u;t)_n t^{n(\nu)}\frac{(t;t)_n}{(ut^n;t)_n} \frac{u^{|\nu^+|} t^{(2n-1)(|\nu^+| - |\nu|) + n(\nu)}}{\prod_{x \in \Z} (t;t)_{m_x(\nu)}} \\
    &= \frac{(u;t)_n^2}{(u;t)_{2n}} u^{|\nu^+|} t^{(2n-1)(|\nu^+| - |\nu|) + 2n(\nu)} \frac{(t;t)_n^2}{\prod_{x \in \Z} (t;t)_{m_x(\nu)}},
\end{split}
\end{align}
which is the formula in \Cref{thm:finite_phua_SNs}, completing the proof.
\end{proof}

In some sense, the interpretation of the measures $M_n^{(s)}$ which we have given here explains their special nature and gives a natural non-historical route to their discovery. Let us suppose that one knew only \Cref{thm:p-adic_corners} and \Cref{thm:recover_BQ}, and wished to look for family of measures on $\Mat_{n \times n}(\Q_p)$ which are consistent under taking corners. Any measure on the boundary yields such a family (and vice versa), but only for very nice measures on the boundary do we expect the resulting measure on corners to have any reasonable description. Because the cotransition probabilities feature principal specializations, the natural candidate for this measure on the boundary is a Hall-Littlewood measure with two principal specializations $u_1,u_1t,\ldots$ and $u_2,u_2t,\ldots$. Indeed, the above combinatorics would break down entirely for other Hall-Littlewood measures. This leaves one free parameter because one may divide one specialization and multiply the other by any positive real number without changing the measure, and this free parameter is exactly the one in the $p$-adic Hua measure. 

In another direction we note that, if one did not already know the result of \cite{assiotis2020infinite}, the above considerations could help guess it. Since known natural measures on finite $p$-adic matrices have singular numbers distributed by Hall-Littlewood measures by \cite[Theorem 1.3 and Corollary 1.4]{van2020limits}, and the ergodic decomposition of a measure on infinite matrices is the analogue of the distribution of singular numbers of a finite matrix, it is natural to search for the ergodic decomposition within the space of Hall-Littlewood measures. As mentioned above, essentially the only Hall-Littlewood measures with nice explicit densities are those with principal specializations, of finite or infinite length. If one were of finite length, say $N$, then it is a straightforward consequence of \Cref{thm:recover_BQ} that at most $N$ singular numbers of any corner are nonzero, which contradicts \Cref{thm:finite_phua_SNs}. Hence if the ergodic decomposition is according to a well-behaved (principally specialized) Hall-Littlewood measure, both specializations must be infinite, and this leads exactly to the one-parameter family of Hall-Littlewood measures which do indeed appear.

\section{Products of finite {$p$}-adic random matrices} \label{sec:matrix_products}

In this section, we prove the exact formula \Cref{thm:product_intro} for singular numbers of products of Haar matrices. Recall that when $n \leq m$ and $A \in \Mat_{n \times m}(\Z_p)$ is nonsingular, the image $\Im(A) \subset \Z_p^n$ of the map $A: \Z_p^m \to \Z_p^n$ is a $\Z_p$-submodule and the cokernel $\coker(A) := \Z_p^n/\Im(A)$ is a finite abelian $p$-group, given by
\[
\coker(A) \cong \bigoplus_{i=1}^n \Z/p^{\lambda_i}\Z
\]
where $-\lambda = \SN(A)$. Most literature on $p$-adic random matrices takes this perspective of random abelian $p$-groups, see the references in the Introduction. 

To relate cokernels/singular numbers of matrix products to Hall-Littlewood combinatorics, we quote a special case of \cite[Corollary 3.4]{van2020limits}, which states that (negative\footnote{Let us reiterate that the sign convention on singular numbers here is opposite from the one in \cite{van2020limits} from which the above was taken, so \Cref{thm:hl_from_p_paper} differs from the statement in \cite[Corollary 3.4]{van2020limits} by a sign.}) singular numbers of matrix products are distributed as a Hall-Littlewood process (defined earlier in \eqref{eq:general_hl_proc}). In what follows we use the notation $a[k]$ for variables repeated $k$ times. 

\begin{prop}\label{thm:hl_from_p_paper}
Let $t=1/p$, fix $n \geq 1$, and for $1 \leq i \leq k$ let $A_i$ have iid entries distributed by the additive Haar measure on $\Z_p$. Then for $\lambda^{(1)},\ldots,\lambda^{(k)} \in \Sig_n^{\geq 0}$,
\begin{multline*}
    \Pr(\SN(A_i \cdots A_1) = -\lambda^{(i)}\text{ for all }i=1,\ldots,k)  = \frac{P_{\lambda^{(k)}}(1,\ldots,t^{n-1}) \prod_{i=1}^k Q_{\lambda^{(i)}/\lambda^{(i-1)}}(t,t^2,\ldots) }{\Pi(1,\ldots,t^{n-1}; t[k],t^2[k],\ldots)}
\end{multline*}
where we take $\lambda^{(0)} = (0[n])$.
\end{prop}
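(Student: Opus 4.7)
The plan is to identify the joint law of $(\SN(A_i \cdots A_1))_{i=1}^k$ with the ascending Hall-Littlewood process \eqref{eq:general_hl_proc} corresponding to $\mathbf{b} = (1, t, \ldots, t^{n-1})$ and $\ba^{(i)} = (t, t^2, \ldots)$ for every $i$, by showing the sequence is Markov with the correct one-step transition kernel.

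First I would verify by induction on $i$ that each partial product $M_i := A_i \cdots A_1$ has a $\GL_n(\Z_p) \times \GL_n(\Z_p)$-biinvariant distribution on $\Mat_n(\Z_p)$: right-invariance follows because $M_i v = A_i \cdots A_2 (A_1 v)$ is equidistributed with $M_i$ by right-invariance of the entrywise Haar law of $A_1$, and left-invariance is inherited directly from the biinvariance of $A_i$. Consequently, conditioned on $\SN(M_i) = -\mu$, the matrix $M_i$ has the unique biinvariant law on its orbit, namely $U D_\mu V$ with $U,V$ independent Haar in $\GL_n(\Z_p)$ and $D_\mu := \diag(p^{\mu_1}, \ldots, p^{\mu_n})$. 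This forces $(\SN(M_i))_{i \geq 1}$ to be itself a Markov chain, with one-step transition given by the distribution of $\SN(A_{i+1} U D_\mu V) = \SN(A' D_\mu)$, where $A' := A_{i+1} U$ is again entrywise Haar on $\Mat_n(\Z_p)$.

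The central computation is thus to establish, for a single entrywise Haar matrix $A$ on $\Mat_n(\Z_p)$, the identity
\[
\Pr(\SN(A D_\mu) = -\lambda) = Q_{\lambda/\mu}(t, t^2, \ldots) \cdot \frac{P_\lambda(1, \ldots, t^{n-1})}{P_\mu(1, \ldots, t^{n-1}) \, \Pi(t, t^2, \ldots;\, 1, \ldots, t^{n-1})}.
\]
I would prove this via Hall algebra combinatorics on abelian $p$-groups: the cokernel of $A D_\mu$ fits into an extension $0 \to \coker(A) \to \coker(A D_\mu) \to G_\mu(p) \to 0$, and the number of Haar-distributed $A$ (at any given truncation level) producing a prescribed extension class is, up to the normalization of additive Haar, a Hall polynomial structure constant. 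Under the principal specialization of $Q_{\lambda/\mu}(t,t^2,\ldots)$ computed in \Cref{thm:skew_formula}, these Hall polynomial values reassemble into the branching coefficients $\varphi$ of \Cref{def:psi_varphi_coefs} summed over chains $\mu = \nu^{(0)} \prec_Q \nu^{(1)} \prec_Q \cdots$; the principal specialization formula \Cref{thm:hl_principal_formulas} then accounts for the $P$-factor and Cauchy kernel $\Pi$, with the latter arising as the normalizing constant for $\SN(A)$ via the $k=1$ case (the classical Friedman--Washington formula).

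Assembling, multiplying these one-step transitions for $i = 1, \ldots, k$ telescopes the factors $P_{\lambda^{(i)}}(1,\ldots,t^{n-1})/P_{\lambda^{(i-1)}}(1,\ldots,t^{n-1})$ down to $P_{\lambda^{(k)}}(1, \ldots, t^{n-1})$ (since $P_{(0[n])} = 1$), and the $k$ copies of $\Pi(t, t^2, \ldots;\, 1, \ldots, t^{n-1})$ combine by multiplicativity of $\Pi$ in its second argument into $\Pi(1, \ldots, t^{n-1};\, t[k], t^2[k], \ldots)$, producing the stated formula. The main obstacle is the one-step computation: the biinvariance reduction and the telescoping are routine, whereas identifying $\Pr(\SN(A D_\mu) = -\lambda)$ with a skew Hall-Littlewood cotransition probability is the substantive input and is where the Hall algebra structure does the work.
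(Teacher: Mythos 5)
Your reduction of the statement to a one-step computation is sound and, in fact, mirrors how the result is actually established: the paper itself does not prove this proposition but quotes it as a special case of \cite[Corollary 3.4]{van2020limits}, which in that reference is deduced from a one-step transition theorem by exactly the bi-invariance/Markov/telescoping argument you give. Those parts of your proposal (conditional law $UD_\mu V$ on an orbit, the Markov property of $(\SN(M_i))_i$, the telescoping of the $P$-factors, and the multiplicativity of $\Pi$ in its second set of variables) are all correct.

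The gap is that the one-step identity $\Pr(\SN(AD_\mu)=-\lambda)=Q_{\lambda/\mu}(t,t^2,\ldots)\,P_\lambda(1,\ldots,t^{n-1})/\bigl(P_\mu(1,\ldots,t^{n-1})\Pi(t,t^2,\ldots;1,\ldots,t^{n-1})\bigr)$ is the entire content of the proposition, and your treatment of it is a gesture rather than a proof. The route you propose (Hall-algebra counting of extensions) is genuinely different from the derivation in \cite{van2020limits}, which goes through $\GL_n(\Z_p)$-spherical functions and Cauchy-identity manipulations, and it is plausible in principle; but as written several substantive identifications are asserted without justification: that conditional on $\coker(A)\cong G_\nu$ the lattice $\Im(A)$ is uniform of cotype $\nu$ and $\Im(AD_\mu)$ is a uniform cotype-$\mu$ sublattice of it; that the resulting subgroup counts, weighted by the Haar/automorphism normalizations, reassemble into the principally specialized $Q_{\lambda/\mu}(t,t^2,\ldots)$; and, most delicately, where the finite-$n$ factor $P_\lambda(1,\ldots,t^{n-1})/P_\mu(1,\ldots,t^{n-1})$ comes from, since a pure extension-counting argument over abelian $p$-groups does not see $n$ directly. (Also, your exact sequence is written with the sub and quotient interchanged: since $\Im(AD_\mu)\subseteq\Im(A)$ and $A$ is a.s.\ injective, the correct sequence is $0\to G_\mu(p)\to\coker(AD_\mu)\to\coker(A)\to 0$.) Until the one-step kernel is actually derived, the argument reduces the proposition to the very statement being cited.
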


We will deduce \Cref{thm:product_intro} from \Cref{thm:hl_from_p_paper} together with the following, which uses results of \Cref{sec:skew_formulas} to write an explicit formula for Hall-Littlewood process dynamics. 

\begin{prop}\label{thm:explicit_cauchy_dynamics}
For $n \geq 1$ and $\lambda,\nu \in \Sig_n$, we have 
\begin{equation}
    \frac{Q_{\nu/\lambda}(u,ut,\ldots) P_\nu(1,\ldots,t^{n-1})}{P_\lambda(1,\ldots,t^{n-1}) \Pi(1,\ldots,t^{n-1}; u,ut,\ldots)} = (u;t)_n u^{|\nu|-|\lambda|} t^{n(\nu)-n(\lambda) + n(\nu/\lambda)} \prod_{x \in \Z} \sqbinom{\nu_x'-\lambda_{x+1}'}{\nu_x' - \nu_{x+1}'}_t.
\end{equation}
\end{prop}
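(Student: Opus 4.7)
The plan is to evaluate the LHS by directly substituting the three explicit formulas already established earlier in the paper, and then perform a product-index shift to match the $t$-binomial form of the RHS.

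First, I would evaluate the Cauchy kernel appearing in the denominator. Substituting $x_i = t^{i-1}$ ($1 \le i \le n$) and $y_j = ut^{j-1}$ ($j \ge 1$) into the definition \eqref{eq:def_cauchy_kernel}, each inner product over $j$ telescopes:
\[
\prod_{j=1}^{\infty} \frac{1-ut^{i+j-1}}{1-ut^{i+j-2}} = \frac{1}{1-ut^{i-1}},
\]
so that $\Pi(1,\ldots,t^{n-1};u,ut,\ldots) = 1/(u;t)_n$. This accounts for the factor $(u;t)_n$ on the RHS.

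Second, I would substitute the principal specialization formula \Cref{thm:hl_principal_formulas} for $P_\nu$ and $P_\lambda$, obtaining
\[
\frac{P_\nu(1,\ldots,t^{n-1})}{P_\lambda(1,\ldots,t^{n-1})} = t^{n(\nu)-n(\lambda)} \prod_{x \in \Z} \frac{(t;t)_{m_x(\lambda)}}{(t;t)_{m_x(\nu)}},
\]
and substitute the principally specialized skew $Q$ formula \eqref{eq:princ_skew_Q} from \Cref{thm:skew_formula} for $Q_{\nu/\lambda}(u,ut,\ldots)$. The $(t;t)_{m_x(\lambda)}$ factors cancel between $Q_{\nu/\lambda}$ and the $P$-ratio, leaving the LHS in the form
\[
(u;t)_n\, u^{|\nu|-|\lambda|}\, t^{n(\nu)-n(\lambda)+n(\nu/\lambda)} \prod_{x \in \Z} \frac{(t^{1+\nu_x'-\lambda_x'};t)_{m_x(\lambda)}}{(t;t)_{m_x(\nu)}}.
\]
The prefactors already agree with the RHS, so it only remains to identify the product over $x$ with the claimed product of $t$-binomials.

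Third, using $m_x(\lambda) = \lambda_x' - \lambda_{x+1}'$, I would rewrite each Pochhammer numerator as
\[
(t^{1+\nu_x'-\lambda_x'};t)_{m_x(\lambda)} = \frac{(t;t)_{\nu_x'-\lambda_{x+1}'}}{(t;t)_{\nu_x'-\lambda_x'}}.
\]
The key observation is now that the product over $x \in \Z$ has only finitely many nontrivial factors (since all three of $\nu_x'-\lambda_{x+1}', \nu_x'-\lambda_x', m_x(\nu)$ vanish outside a bounded range of $x$), so I may reindex the single-$x$ factor $\prod_x (t;t)_{\nu_x'-\lambda_x'}$ via $x \mapsto x+1$ to rewrite it as $\prod_x (t;t)_{\nu_{x+1}'-\lambda_{x+1}'}$. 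Combining this with $m_x(\nu) = \nu_x'-\nu_{x+1}'$ and the identity $(\nu_x'-\lambda_{x+1}') - (\nu_x'-\nu_{x+1}') = \nu_{x+1}'-\lambda_{x+1}'$ gives
\[
\prod_{x \in \Z} \frac{(t;t)_{\nu_x'-\lambda_{x+1}'}}{(t;t)_{\nu_{x+1}'-\lambda_{x+1}'}(t;t)_{\nu_x'-\nu_{x+1}'}} = \prod_{x \in \Z} \sqbinom{\nu_x'-\lambda_{x+1}'}{\nu_x'-\nu_{x+1}'}_t,
\]
which completes the proof.

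This is essentially a bookkeeping calculation; there is no genuine obstacle. The one point to watch is justifying the index shift of an infinite product over $\Z$, which is legitimate precisely because all but finitely many factors equal $1$.
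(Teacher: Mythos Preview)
Your proof is correct and follows essentially the same approach as the paper's own proof: compute the Cauchy kernel via telescoping, substitute \Cref{thm:hl_principal_formulas} and \Cref{thm:skew_formula}, and then rewrite the resulting Pochhammer product as $t$-binomials. The paper's proof is terser on the final step (it simply asserts the product identity), while you spell out the index shift explicitly; both are the same argument.
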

\begin{proof}
It follows from the definition in \eqref{eq:def_cauchy_kernel} and telescoping that
\[
\frac{1}{\Pi(1,\ldots,t^{n-1}; u,ut,\ldots)} = (u;t)_n.
\]
Combining \Cref{thm:skew_formula} with \Cref{thm:hl_principal_formulas} yields 
\[
\frac{Q_{\nu/\lambda}(u,ut,\ldots) P_\nu(1,\ldots,t^{n-1})}{P_\lambda(1,\ldots,t^{n-1})} = u^{|\nu|-|\lambda|} t^{n(\nu/\lambda)+n(\nu)-n(\lambda)}\prod_{x \in \Z} \frac{(t^{1+\nu_x'-\lambda_x'};t)_{m_x(\lambda)}}{(t;t)_{m_x(\nu)}} 
\]
Noting that 
\[
\prod_{x \in \Z} \frac{(t^{1+\nu_x'-\lambda_x'};t)_{m_x(\lambda)}}{(t;t)_{m_x(\nu)}} = \prod_{x \in \Z} \sqbinom{\nu_x'-\lambda_{x+1}'}{\nu_x' - \nu_{x+1}'}_t
\]
completes the proof.
\end{proof}

\begin{proof}[Proof of {\Cref{thm:product_intro}}]
Follows immediately by combining \Cref{thm:hl_from_p_paper} and \Cref{thm:explicit_cauchy_dynamics} with $u=t$.
\end{proof}

\appendix 

\section{Markov dynamics on the boundary}\label{sec:appendix_markov}

For finite $n$, one has natural Hall-Littlewood process dynamics on $\Sig_n$, see \eqref{eq:general_cauchy_dynamics}. It is natural to ask whether these yield dynamics on the boundary $\Sig_\infty$, and whether anything interesting may be said about them. For the $q$-Gelfand-Tsetlin graph mentioned in the Introduction, the resulting dynamics on $\Sig_\infty$ were studied in \cite{borodin2013markov}, see also the references therein for previously studied instances of this question on branching graphs in which the boundary is continuous rather than discrete. In this Appendix, we show in \Cref{thm:links_commutes} that the Hall-Littlewood process dynamics on the levels of $\G$ indeed lift to dynamics on $\partial \G$. This is motivated by a parallel work \cite{van2021q} which studies a continuous-time limit of these dynamics; we are not presently aware of an interpretation in terms of $p$-adic random matrices when $t=1/p$, as with earlier results in this paper. 

While the fact that the dynamics in \cite{van2021q} may be viewed as dynamics on $\partial \G$ is not technically necessary for their analysis in \cite{van2021q}, it provides an interesting context for the results of \cite{van2021q}. There exist other dynamics which arise in a structurally similar manner for different degenerations of Macdonald polynomials, but nonetheless have quite different asymptotic behavior, see the introduction to \cite{van2021q} for further discussion and references. Because \Cref{thm:links_commutes} requires branching graph formalism which is orthogonal to \cite{van2021q} apart from this motivation, we chose to prove it here and discuss the statement informally in \cite{van2021q}.

We now consider Markovian dynamics on the boundary $\partial \G$. We will show that the dynamics \eqref{eq:general_cauchy_dynamics} commute with the cotransition probabilities of $\G$ and hence extend to dynamics on the boundary, which are given by essentially the same formula after identifying the boundary with $\Sig_\infty$. Skew $Q$-polynomials generalize easily to infinite signatures: For $\nu,\lambda \in \Sig_\infty$, define
\begin{equation}
    Q_{\nu/\lambda}(\alpha) := \begin{cases}
    \alpha^{\sum_i \nu_i - \lambda_i} \varphi_{\nu/\lambda}& \text{$\nu_i \geq \lambda_i$ for all $i$ and $\sum_{i \geq 1} \nu_i - \lambda_i < \infty$} \\
    0 & \text{otherwise}
    \end{cases}
\end{equation}
where $\varphi_{\nu/\lambda}$ is extended from \Cref{def:psi_varphi_coefs} to infinite signatures in the obvious way. In the case $\nu,\lambda \in \Y$, this agrees with the standard branching rule in \cite{mac}. 

\begin{defi}
For $0 < \alpha < 1$, define 
\begin{equation}\label{eq:finite_markov}
    \Gamma_\alpha^n(\lambda,\nu) = Q_{\nu/\lambda}(\alpha) \frac{P_\nu(1,\ldots,t^{n-1})}{P_\lambda(1,\ldots,t^{n-1}) \Pi(\alpha;1,\ldots,t^{n-1})}
\end{equation}
for $n \in \Z_{\geq 1}$ and $\lambda,\nu \in \Sig_n$. For $\mu,\kappa \in \Y+D$, define
\begin{equation}\label{eq:infinite_markov_stable}
    \Gamma_\alpha^\infty(\mu, \kappa) = Q_{(\kappa-D[\infty])/(\mu-D[\infty])}(\alpha) \frac{P_{(\kappa-D[\infty])}(1,\ldots)}{P_{(\mu - D[\infty])}(1,\ldots) \Pi(\alpha;1,t,\ldots)}.
\end{equation}
Finally, for $\mu,\kappa \in \Sig_\infty^{unstable}$, define
\begin{equation}\label{eq:infinite_markov_unstable}
    \Gamma_\alpha^\infty(\mu, \kappa) = \lim_{D \to -\infty} \Gamma_\alpha^\infty(\mu^{(D)},\kappa^{(D)}).
\end{equation}

\end{defi}

When $\mu \in \Y$, the dynamics defined by \eqref{eq:infinite_markov_stable} yields a Hall-Littlewood process with one infinite specialization $1,t,\ldots$. The dynamics studied in \cite{van2021q} are a continuous-time limit of these: for positive real $\tau$, $(\Gamma^n_{\tau/D})^D$ converges to a Markov kernel, which defines a Markov process in continuous time $\tau$. We prove \Cref{thm:links_commutes} in the above discrete-time setting to minimize technicalities, though the statement for the limiting continuous-time process is the exactly analogous.

\begin{prop}\label{thm:links_commutes}
For $n \in \Z_{\geq 1} \cup \{\infty\}$, $\Gamma_\alpha^n$ is a Markov kernel. For $1 \leq n < m < \infty$ it commutes with the links $L^m_n$ in the sense that
\begin{equation}\label{eq:links_commute}
    \Gamma_\alpha^n L^m_n = L^m_n \Gamma_\alpha^m.
\end{equation}
Therefore given any coherent system $(M_n)_{n \geq 1}$ on $\G$, the pushforward measures $(M_n \Gamma_\alpha^n)_{n \geq 1}$ also form a coherent system. The induced map on $\partial \G$ is given by $\Gamma_\alpha^\infty$.
\end{prop}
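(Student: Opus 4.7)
The plan is to prove the four assertions (Markov property at finite $n$, finite-level commutation, Markov property of $\Gamma_\alpha^\infty$, and identification of the induced map on $\partial\G$) primarily through the Cauchy identity \Cref{thm:finite_cauchy} and its partition analogue \eqref{eq:infinite_cauchy}, with a monotone-convergence argument to handle unstable signatures.

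For finite $n$, $\sum_\nu \Gamma_\alpha^n(\lambda,\nu)=1$ is an instance of \Cref{thm:finite_cauchy} applied with the empty signature, as already observed beneath \eqref{eq:general_cauchy_dynamics}. The commutation at finite levels, interpreted as $\sum_\lambda L^m_n(\mu,\lambda)\Gamma_\alpha^n(\lambda,\nu) = \sum_\kappa \Gamma_\alpha^m(\mu,\kappa) L^m_n(\kappa,\nu)$ for $(\mu,\nu)\in \Sig_m\times \Sig_n$, is proved by expanding both sides, cancelling the common factors $P_\mu(1,\ldots,t^{m-1})^{-1}$ and $P_\nu(1,\ldots,t^{n-1})$, and reducing to \Cref{thm:finite_cauchy} with $x=(t^n,\ldots,t^{m-1})$ and $y=\alpha$. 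The two normalizing constants match via the multiplicativity $\Pi(\alpha;1,\ldots,t^{m-1}) = \Pi(\alpha;1,\ldots,t^{n-1})\,\Pi(\alpha;t^n,\ldots,t^{m-1})$ of the Cauchy kernel. Applying this one level at a time shows that pushforwards of coherent systems are again coherent.

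For $\mu\in\Y+D$, $\Gamma_\alpha^\infty(\mu,\cdot)$ is a Markov kernel by translation invariance plus the partition Cauchy identity \eqref{eq:infinite_cauchy} applied to $\mu-D[\infty]\in\Y$. To identify the induced dynamics on $\partial\G$ at a stable $\mu$, I would establish the intertwining
\[
\sum_\lambda M_n^\mu(\lambda)\,\Gamma_\alpha^n(\lambda,\nu) \;=\; \sum_\kappa \Gamma_\alpha^\infty(\mu,\kappa)\,M_n^\kappa(\nu)
\]
for every $n\geq 1$ and $\nu\in\Sig_n$. For $\mu\in\Y$ this unfolds, after the same cancellations as in the commutation step, into \eqref{eq:infinite_cauchy} with $x=(t^n,t^{n+1},\ldots)$ and $y=\alpha$, the normalizations now matching via the telescoping $\Pi(\alpha;1,t,\ldots) = \Pi(\alpha;1,\ldots,t^{n-1})\,\Pi(\alpha;t^n,t^{n+1},\ldots) = (1-\alpha)^{-1}$. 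Since the $(M_n^\kappa)$ are the extreme coherent systems (\Cref{thm:all_boundary2}) and $\Gamma_\alpha^\infty(\mu,\cdot)$ is a probability measure on $\Sig_\infty$, uniqueness of the extremal decomposition then forces the induced boundary kernel at $\mu$ to coincide with $\Gamma_\alpha^\infty(\mu,\cdot)$; translation invariance extends the identification to all of $\Y+D$.

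The main obstacle is the unstable case $\mu\in\Sig_\infty^{unstable}$, where one must show that the limit in \eqref{eq:infinite_markov_unstable} exists, defines a probability measure, and satisfies the intertwining identity above. My plan is to mimic the monotone-convergence machinery of \Cref{thm:measure_and_monotone} and \Cref{thm:find_boundary}: using \Cref{thm:skew_formula} and \Cref{thm:hl_principal_formulas}, I would rewrite $\Gamma_\alpha^\infty(\mu^{(D)},\kappa^{(D)})$ as an explicit product, indexed by $x\in\Z_{>D}$, of factors in $[0,1]$ that stabilize to their $\mu,\kappa$-values once $D$ drops below the smallest relevant index, so that the pointwise limit exists and can be exchanged with the sum over $\kappa$. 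The total-mass condition and the intertwining identity, already known for $\mu^{(D)}\in\Y+D$, then transfer to unstable $\mu$ via the monotone convergence theorem, completing the identification of $\Gamma_\alpha^\infty$ as the induced dynamics on the boundary.
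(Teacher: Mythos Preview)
Your proposal is correct and follows essentially the same route as the paper: Cauchy identities for the finite-level Markov property and commutation, the intertwining $\sum_\lambda M_n^\mu(\lambda)\Gamma_\alpha^n(\lambda,\nu)=\sum_\kappa \Gamma_\alpha^\infty(\mu,\kappa)M_n^\kappa(\nu)$ to identify the boundary dynamics (first for $\mu\in\Y+D$, then for unstable $\mu$ by approximation), and monotone convergence to pass limits through sums in the unstable case. One small refinement worth noting: \Cref{thm:skew_formula} treats infinite geometric specializations rather than the single-variable $Q_{\cdot/\cdot}(\alpha)$, so the monotonicity you want in the unstable step is obtained more cleanly---as the paper does---by inserting the indicator $\bbone(\kappa_i=\mu_i\text{ whenever }\kappa_i<D)$, after which each summand is either $0$ or its eventual constant value and monotone convergence applies trivially.
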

\begin{proof}
The fact that \eqref{eq:finite_markov} and \eqref{eq:infinite_markov_stable} define Markov kernels follows directly from the Cauchy identity, \Cref{thm:finite_cauchy} and \eqref{eq:infinite_cauchy} respectively. For the infinite case \eqref{eq:infinite_markov_unstable}, we must show
\begin{equation}\label{eq:check_infinite_markov}
    \sum_{\kappa \in \Sig_\infty} \lim_{D \to -\infty} Q_{(\kappa^{(D)}-D[\infty])/(\mu^{(D)}-D[\infty])}(\alpha) \frac{P_{(\kappa^{(D)}-D[\infty])}(1,\ldots)}{P_{(\mu^{(D)} - D[\infty])}(1,\ldots) \Pi(\alpha;1,t,\ldots)} = 1.
\end{equation}
Note that
\[
Q_{(\kappa^{(D)}-D[\infty])/(\mu^{(D)}-D[\infty])}(\alpha) \frac{P_{(\kappa^{(D)}-D[\infty])}(1,\ldots)}{P_{(\mu^{(D)} - D[\infty])}(1,\ldots) \Pi(\alpha;1,t,\ldots)} \bbone(\kappa_i = \mu_i \text{ whenever }\kappa_i < D)
\]
increases monotonically as $D \to -\infty$ in a trivial way, namely it is either $0$ (for $D$ such that the indicator is $0$) or its final constant value (when the indicator function is nonzero). Hence we again interchange limit and sum by monotone convergence, obtaining 
\[
\lim_{D \to -\infty} \sum_{\substack{\kappa \in \Y+D}}  Q_{(\kappa-D[\infty])/(\mu^{(D)}-D[\infty])}(\alpha) \frac{P_{(\kappa-D[\infty])}(1,\ldots)}{P_{(\mu^{(D)} - D[\infty])}(1,\ldots) \Pi(\alpha;1,t,\ldots)}.
\]
This is $1$ by the Cauchy identity \eqref{eq:infinite_cauchy}.

Below we will show \eqref{eq:links_commute}, from which it follows that the maps $\Gamma_\alpha^n$ preserve coherent systems and hence induce a Markov kernel on $\partial \G$. To show that this Markov kernel is given by $\Gamma_\alpha^\infty$ we must show the `$m=\infty$' analogue of \eqref{eq:links_commute}, namely for any $\mu \in \Sig_\infty, \nu \in \Sig_n$ one has
\begin{equation}\label{eq:links_commute_infty}
    \sum_{\kappa \in \Sig_\infty} \Gamma_\alpha^\infty(\mu,\kappa) M^{\kappa}_n(\nu) = \sum_{\lambda \in \Sig_n} M^\mu_n(\lambda) \Gamma_\alpha^n(\lambda,\nu).
\end{equation}
We will treat \eqref{eq:links_commute} and \eqref{eq:links_commute_infty} simultaneously, and so introduce the notation $L^\infty_m(\mu,\cdot) := M^\mu_n(\cdot)$. For \eqref{eq:links_commute_infty}, if $\mu \in \Y+D$ for some $D$, then by translation-invariance and the Cauchy identity,
\begin{align*}
    \Gamma_\alpha^n L^\infty_n(\mu,\nu) &= \sum_{\lambda \in \Sig_n} L^\infty_n(\mu,\lambda ) \Gamma_\alpha^n(\lambda,\nu) \\
    &= \sum_{\lambda \in \Sig_n} L^\infty_n(\mu-D[\infty],\lambda -D[n]) \Gamma_\alpha^n(\lambda-D[n],\nu - D[n]) \\
    &= \sum_{\lambda \in \Sig_n} P_{(\mu - D[\infty])/(\lambda - D[n])}(t^n,\ldots) \frac{P_{(\lambda-D[n])}(1,\ldots,t^{n-1})}{P_{(\mu - D[\infty])}(1,\ldots)} \\
    &\times Q_{(\nu - D[n])/(\lambda - D[n])}(\alpha)  \frac{P_{(\nu-D[n])}(1,\ldots,t^{n-1})}{P_{(\lambda-D[n])}(1,\ldots,t^{n-1})\Pi(\alpha;1,\ldots,t^{n-1})} \\
    &= \frac{P_{(\nu-D[n])}(1,\ldots,t^{n-1})}{(1,\ldots,t^{n-1})\Pi(\alpha;1,\ldots,t^{n-1})}  \left(\frac{1}{\Pi(\alpha; t^n,\ldots)} \sum_{\kappa \in \Y} P_{\kappa/(\nu-D[n])}(t^n,\ldots)Q_{\kappa/(\mu - D[\infty])}(\alpha) \right)\\
    &= \sum_{\kappa \in \Y} L^\infty_n(\kappa+D[\infty],\nu) \Gamma_\alpha^\infty(\mu,\kappa+D[\infty]) \\
    &= L^\infty_n \Gamma_\alpha^\infty(\mu,\nu)
\end{align*}
The proof of \eqref{eq:links_commute} is the same after replacing $\infty$ with $m$, without the translation by $D$ issues. The case $\mu \in \Sig_\infty^{unstable}$ of \eqref{eq:links_commute_infty} requires a limiting argument: 
\begin{multline*}
    \Gamma_\alpha^n L^\infty_n(\mu,\nu) = \sum_{\lambda \in \Sig_n} Q_{\nu/\lambda}(\alpha) \frac{P_{\nu}(1,\ldots,t^{n-1})}{P_{\nu}(1,\ldots,t^{n-1}) \Pi(\alpha; 1,\ldots,t^{n-1})} \\
    \times \lim_{D \to -\infty} P_{(\mu^{(D)}-D[\infty])/(\lambda-D[n])}(t^n,\ldots) \frac{P_{(\lambda-D[n])}(1,\ldots,t^{n-1})}{P_{(\mu^{(D)}-D[\infty])}(1,\ldots)},
\end{multline*}
and by \Cref{thm:skew_formula} and monotone convergence this is equal to 
\[
\lim_{D \to -\infty} \sum_{\lambda \in \Sig_n}  \frac{Q_{\nu/\lambda}(\alpha)P_{\nu}(1,\ldots,t^{n-1})}{P_{\nu}(1,\ldots,t^{n-1}) \Pi(\alpha; 1,\ldots,t^{n-1})}P_{(\mu^{(D)}-D[\infty])/(\lambda-D[n])}(t^n,\ldots) \frac{P_{(\lambda-D[n])}(1,\ldots,t^{n-1})}{P_{(\mu^{(D)}-D[\infty])}(1,\ldots)}.
\]
Using that $\Gamma_\alpha^n(\lambda,\nu) = \Gamma_\alpha^n(\lambda-D[n],\nu-D[n])$ yields
\[
\lim_{D \to -\infty}\frac{P_{(\nu-D[n])}(1,\ldots,t^{n-1})}{P_{(\mu^{(D)}-D[\infty])}(1,\ldots)} \sum_{\lambda \in \Sig_n} Q_{(\nu-D[n])/(\lambda-D[n])}(\alpha) P_{(\mu^{(D)}-D[\infty])/(\lambda-D[n])}(t^n,\ldots).
\]
Applying the Cauchy identity \eqref{eq:infinite_cauchy} and the fact that 
\[
\Pi(\alpha;1,\ldots,t^{n-1})\Pi(\alpha;t^n,\ldots) = \Pi(\alpha;1,\ldots),
\]
and rearranging, yields
\[
\lim_{D \to -\infty} \sum_{\tkappa \in \Y} L_n^\infty(\tkappa,\nu-D[n]) \Gamma_\alpha^\infty(\mu^{(D)}-D[\infty],\tkappa).
\]
Changing variables to $\kappa = \tkappa + D[\infty]$ this is
\begin{equation}\label{eq:sumisequal}
\lim_{D \to -\infty} \sum_{\kappa \in \Y+D} L_n^\infty(\kappa-D[\infty],\nu-D[n]) \Gamma_\alpha^\infty(\mu^{(D)}-D[\infty],\kappa-D[\infty]).
\end{equation}
For each fixed $D$, there is an obvious bijection between $\Y+D$ and 
\[
\{\kappa \in \Sig_\infty^{unstable}: \kappa_i=\mu_i \text{ for all $i$ such that }\mu_i \leq D\},
\]
as signatures in either set are determined by their parts which are $>D$. Hence the sum in \eqref{eq:sumisequal} is equal to 
\begin{equation}\label{eq:change_to_unstable}
\sum_{\kappa \in \Sig_\infty^{unstable}} L_n^\infty(\kappa^{(D)}-D[\infty],\nu-D[n]) \Gamma_\alpha^\infty(\mu^{(D)}-D[\infty],\kappa^{(D)}-D[\infty]) I_D(\kappa,\mu),
\end{equation}
where 
\[
I_D(\kappa,\mu) := \bbone(\kappa_i=\mu_i \text{ for all $i$ such that }\mu_i \leq D)
\]
The summands in \eqref{eq:change_to_unstable}, as functions of $D$, take at most two values, namely $0$ (for all $\kappa \neq \mu$, for $D$ positive enough that the indicator function is $0$) and $L_n^\infty(\kappa,\nu) \Gamma_\alpha^\infty(\mu,\kappa)$ when the indicator function is nonzero. Hence monotone convergence again applies, yielding 
\[
\sum_{\kappa \in \Sig_\infty} \lim_{D \to -\infty} L_n^\infty(\kappa^{(D)}-D[\infty],\nu-D[n]) \Gamma_\alpha^\infty(\mu^{(D)}-D[\infty],\kappa^{(D)}-D[\infty]) I_D(\kappa,\mu).
\]
The summand stabilizes to $L_n^\infty(\kappa,\nu) \Gamma_\alpha^\infty(\mu,\kappa)$ (using translation-invariance of $L_n^\infty$), hence the above is equal to $L_n^\infty \Gamma_\alpha^\infty(\mu,\nu)$ as desired. This completes the proof.
\end{proof}




\newcommand{\etalchar}[1]{$^{#1}$}

\end{document}